\documentclass{article}

\usepackage{graphicx}
\usepackage{amsfonts, amsmath, amssymb, amsthm}
\usepackage{authblk}
\usepackage{cite}
\usepackage{float}
\usepackage{subcaption}
\usepackage[T1]{fontenc}

\usepackage{xcolor}
\usepackage{xspace}

\usepackage{hyperref}
\usepackage{cleveref}

\usepackage[textwidth=1.5in]{todonotes}
\usepackage[letterpaper, margin=1in, includefoot]{geometry}

\usepackage{thmtools}
\usepackage{thm-restate}

\usepackage{tikz}
\usetikzlibrary{positioning}
\usetikzlibrary{calc}
\usetikzlibrary{math}

\tikzset{
	position/.style args={#1:#2 from #3}{
		at=($(#3)+(#1:#2)$)
	}
}

\tikzset{
  v:main/.style = {draw, circle, scale=0.8, thick,fill=black,inner sep=0.7mm},
  v:ghost/.style = {inner sep=0pt,scale=1},
  >={latex},
  e:marker/.style = {line width=8.5pt,line cap=round,opacity=0.35,color=DarkGoldenrod},
  e:main/.style = {line width=1pt},
}

\newcommand{\mis}{\textsc{Maximum Weighted Independent Set}\xspace}
\newcommand{\ocp}{\textsf{OCP}}

\newcommand{\tocp}{\textsf{tOCP}}
\newcommand{\tocptw}{\textsf{tOCP}\mbox{-}\textsf{tw}\xspace}
\newcommand{\tdm}{\textsf{TDM}}
\newcommand{\tdmtw}{\textsf{TDM}\mbox{-}\textsf{tw}\xspace}

\theoremstyle{definition}

\newtheorem{environment}{Environment}[section]

\newtheorem{lemma}[environment]{Lemma}
\crefname{lemma}{lemma}{lemmata}
\crefformat{lemma}{#2Lemma~#1#3}
\Crefformat{lemma}{#2Lemma~#1#3}

\newtheorem*{lemma*}{Lemma}
\crefname{lemma*}{lemma}{lemmata}
\crefformat{lemma*}{#2Lemma~#1#3}
\Crefformat{lemma*}{#2Lemma~#1#3}

\crefname{proposition}{proposition}{propositions}
\crefformat{proposition}{#2Proposition~#1#3}
\Crefformat{proposition}{#2Proposition~#1#3}

\newtheorem{corollary}[environment]{Corollary}
\crefname{corollary}{corollary}{corollaries}
\crefformat{corollary}{#2Corollary~#1#3}
\Crefformat{corollary}{#2Corollary~#1#3}

\newtheorem{theorem}[environment]{Theorem}
\crefname{theorem}{theorem}{Theorems}
\crefformat{theorem}{#2Theorem~#1#3}
\Crefformat{theorem}{#2Theorem~#1#3}

\newtheorem*{theorem*}{Theorem}
\crefname{theorem*}{theorem}{Theorems}
\crefformat{theorem*}{#2Theorem~#1#3}
\Crefformat{theorem*}{#2Theorem~#1#3}

\newtheorem{conjecture}[environment]{Conjecture}
\crefname{conjecture}{conjecture}{Conjectures}
\crefformat{conjecture}{#2Conjecture~#1#3}
\Crefformat{conjecture}{#2Conjecture~#1#3}

\newtheorem*{hypothesis*}{Hypothesis}
\crefname{hypothesis*}{conjecture}{Conjectures}
\crefformat{hypothesis*}{#2Conjecture~#1#3}
\Crefformat{hypothesis*}{#2Conjecture~#1#3}

\newtheorem{observation}[environment]{Observation}
\crefname{observation}{observation}{Observations}
\crefformat{observation}{#2Observation~#1#3}
\Crefformat{observation}{#2Observation~#1#3}

\crefname{example}{example}{examples}
\crefformat{example}{#2Example~#1#3}
\Crefformat{example}{#2Example~#1#3}

\crefname{remark}{remark}{remarks}
\crefformat{remark}{#2Remark~#1#3}
\Crefformat{remark}{#2Remark~#1#3}

\crefname{figure}{figure}{figures}
\crefformat{figure}{#2Figure~#1#3}
\Crefformat{figure}{#2Figure~#1#3}

\crefname{equation}{equation}{Equations}
\crefformat{equation}{#2#1#3}
\Crefformat{equation}{#2#1#3}

\crefname{chapter}{chapter}{chapters}
\crefformat{chapter}{#2Chapter~#1#3}
\Crefformat{chapter}{#2Chapter~#1#3}

\crefname{section}{section}{sections}
\crefformat{section}{#2Section~#1#3}
\Crefformat{section}{#2Section~#1#3}

\crefname{algorithm}{algorithm}{algorithms}
\crefformat{algorithm}{#2Algorithm~#1#3}
\Crefformat{algorithm}{#2Algorithm~#1#3}

\crefname{notation}{notation}{notations}
\crefformat{notation}{#2Notation~#1#3}
\Crefformat{notation}{#2Notation~#1#3}

\newtheorem{question}[environment]{Question}
\crefname{question}{question}{questions}
\crefformat{question}{#2Question~#1#3}
\Crefformat{question}{#2Question~#1#3}

\crefname{problem}{problem}{problem}
\crefformat{problem}{#2problem~#1#3}
\Crefformat{problem}{#2problem~#1#3}

\crefname{claim}{claim}{claims}
\crefformat{claim}{#2Claim~#1#3}
\Crefformat{claim}{#2Claim~#1#3}

\newtheorem{definition}[environment]{Definition}
\crefname{definition}{definition}{definitions}
\crefformat{definition}{#2Definition~#1#3}
\Crefformat{definition}{#2Definition~#1#3}

\definecolor{CornflowerBlue}{rgb}{0.39, 0.58, 0.93}
\definecolor{Magenta}{rgb}{0.50, 0.0, 0.50}
\definecolor{AppleGreen}{rgb}{0.55, 0.71, 0.0}
\definecolor{AO}{rgb}{0.0, 0.5, 0.0}

\hypersetup{
colorlinks=true,
linkcolor=AO!65!black,
citecolor=AO!65!black,
urlcolor=AppleGreen!65!black,
bookmarksopen=true,
bookmarksnumbered,
bookmarksopenlevel=2,
bookmarksdepth=3
}

\definecolor{DeepCarrotOrange}{rgb}{0.91, 0.41, 0.17}
\definecolor{BananaYellow}{rgb}{1.0, 0.88, 0.21}

\usepackage{graphicx} 

\title{Totally $\Delta$-Modular Tree Decompositions of Graphic Matrices for Integer Programming}
\author[1]{Caleb McFarland\footnote{Supported in part by the National Science Foundation under Grant No. DMS- 2452111.}\footnote{Supported in part by the Georgia Tech ARC-ACO Fellowship.}
}
\affil[1]{School of Mathematics, Georgia Institute of Technology}
\date{June 2025}

\begin{document}

\maketitle

\begin{abstract}
    We introduce the tree-decomposition-based parameter \emph{totally $\Delta$-modular treewidth} (\textsf{TDM}-treewidth) for matrices with two nonzero entries per row. We show how to solve integer programs whose matrices have bounded \textsf{TDM}-treewidth in polynomial time when variables have bounded domain. This extends previous graph-based decomposition parameters for matrices with at most two nonzero entries per row to include matrices with entries outside of $\{-1,0,1\}$. We also give an analogue of the Grid Theorem of Robertson and Seymour for matrices of bounded \textsf{TDM}-treewidth in the language of rooted signed graphs.
\end{abstract}

\section{Introduction}

We are interested in solving integer programs of the form
\[
    \max\{w^Tx : Ax \leq b, x \in [\ell, u] \cap \mathbb{Z}^n\}
\]
where $A \in \mathbb{Z}^{m \times n}, b \in \mathbb{Z}^m, w \in \mathbb{Z}^n$, and $\ell,u \in (\mathbb{Z} \cup \{\infty\})^n$. We may assume that $A$ has at least two nonzero entries per row by possibly changing $\ell,u$. Integer programs generalize many combinatorial optimization problems, see \cite{Schrijver2003Combinatorial}. It is well known that solving integer programs is NP-hard in general, so there has been much research on what conditions on $A$ allow for a polynomial time algorithm. Examples include when $A$ has a constant number of columns (see \cite{Lenstra1983Integer} and improvements in \cite{Kannan1987Minkowskis, Dadush2012integer, reis2023subspace}), a constant number of rows (see \cite{Papdimitriou1981complexity} and improvements in \cite{EisenbrandW2019proximity}), or different block structures \cite{cslovjecsek2021block, cslovjecsek2020efficient, eisenbrand2019algorithmic, brianski2024characterization, cslovjecsek2025parameterized}.

The most classical example is when $A$ is totally unimodular, see \cite{Schrijver2003Combinatorial}. To generalize this, a matrix is called \emph{totally $\Delta$-modular} if every square submatrix of $A$ has determinant bounded by $\Delta$ in absolute value. Hence $\Delta= 1$ is exactly when $A$ is totally unimodular. The following conjecture seeks to generalize the unimodular case to all $\Delta$.

\begin{conjecture}[\cite{shevchenko1996qualitative}]
    For any constant $\Delta$, the integer program $\max\{w^Tx : Ax \leq b, x \in \mathbb{Z}^n\}$ can be solved in polynomial time when $A$ is totally $\Delta$-modular.
\end{conjecture}

Artmann, Weismantel, and Zenklusen~\cite{ArtmannWZ2017Strongly} answered the conjecture in the affirmative for $\Delta = 2$, but $\Delta \geq 3$ remains open. Fiorini, Joret, Weltge, and Yuditsky~\cite{Fiorini2025Integer} gave an algorithm for solving \mis on graphs which do not contain many vertex-disjoint odd cycles. They then used this algorithm to solve totally $\Delta$-modular integer programs with two nonzero entries per row. Kober~\cite{kober2025totally} extends this to totally $\Delta$-modular integer programs with two nonzero entries per row after removing a constant number of rows and columns. Choi, Gorsky, Kim, McFarland, and Wiederrecht~\cite{ChoiGKMW2025OCPtw} extend the work of Fiorini et al. to graphs which ``decompose'' into parts which do not contain many vertex-disjoint odd cycles. They again use this algorithm for \mis to solve the analogous class of integer programs when the coefficients are in $\{-1, 0, 1\}$. Choi et al. also give a grid theorem for when matrices with two nonzero entries per row and entries in $\{-1, 0, 1\}$ decompose into parts which are totally $\Delta$-modular using signed graphs. We extend their work to matrices with entries outside of $\{-1, 0, 1\}$. First, we give a grid theorem which describes the obstructions to decomposing a matrix into totally $\Delta$-modular parts; this theorem is stated in terms of rooted signed graphs. Then we show how to use such a decomposition to solve integer programs.

A \emph{rooted signed graph} is a triple $(G,\gamma,K)$ where $G$ is a graph, $\gamma: E(G) \rightarrow \mathbb{Z}_2$ represents the parity of edges, and $K \subseteq V(G)$ is the set of roots. The odd cycle packing number (\ocp) of a (rooted) signed graph, denoted $\ocp(G,\gamma)$, is the size of a maximum collection of pairwise vertex-disjoint cycles so that the edges of each cycle sum to $1$ under the weighting $\gamma$. Given a matrix $A$ with exactly two nonzero entries per row, we can associate to $A$ a rooted signed graph $G^+_\bullet(A) = (G,\gamma,K)$ where $G$ has edge-vertex incidence matrix with the same support as $A$, $\gamma$ keeps track of which rows have entries with the same or different signs, and $K$ is the set of columns that contain an entry outside of $\{-1, 0, 1\}$. By $G(A)$ we denote the graph $G$, and by $G^+(A)$ we denote the (unrooted) signed graph $(G,\gamma)$. See \cref{sec:Preliminaries} for precise definitions.

Informally, a matrix with two nonzero entries per row has bounded \emph{totally $\Delta$-modular treewidth} if it has a tree decomposition such that each bag is totally $\Delta$-modular. In a rooted signed graph, this corresponds to each bag having few elements in $K$ and bounded odd cycle packing number. However, the precise definition has technical restrictions on the adhesions. For each bag we have an apex set, the ``protector'', and adhesions outside this apex set all have size 0 or 1 depending on whether the protector is ``strong''. This requirement is to allow for dynamic programming: If a bag has an unbounded number of children and each contributes new vertices in their adhesion, this can pose a problem for dynamic programming. The reason that some protectors need not be strong is because we may eventually assume all variables of the integer program are in $\{0,1\}$, see \cref{sec:futherDiscussion} for more details. We now formally define the totally $\Delta$-modular treewidth (\tdm-treewidth) of a rooted signed graph (and thus a matrix with two nonzero entries per row).

\begin{definition}[\tdm-treewidth]
    Let $(G,\gamma,K)$ be a rooted signed graph. Given a tree decomposition $(T,\beta)$ of $(G,\gamma,K)$, a \emph{protector} for a bag $\beta(t)$ is a set $\alpha(t) \subseteq \beta(t)$ that contains all the roots in the bag and such that all adhesions to $t$ have size at most one after deleting $\alpha(t)$. That is, $K \cap \beta(t) \subseteq \alpha(t)$ and $|(\beta(t) \cap \beta(t')) \setminus \alpha(t)| \leq 1$ for all $tt' \in E(T)$. A protector is \emph{strong} if all the adhesions to $t$ are left empty after deleting $\alpha(t)$, i.e. if $\beta(t) \cap \beta(t') \subseteq \alpha(t)$ for all $tt' \in E(T)$.

    A \emph{\tdm-tree-decomposition} of a rooted signed graph $(G, \gamma, K)$ is a tuple $(T,\beta,\alpha,J)$ where $T$ is a tree, $J$ is a (possibly empty) subtree of $T$, and $\beta,\alpha: V(T) \rightarrow 2^{V(G)}$ all satisfy
    \begin{enumerate}
        \item $(T,\beta)$ is a tree-decomposition of $G$,
        \item $\alpha(t)$ is a protector for $\beta(t)$ for all $t \in V(T)$ and a strong protector when $t \in V(J)$,
        \item $K \subseteq \bigcup_{j \in V(J)} \beta(j)$,
    \end{enumerate}
    The \emph{width} of a \tdm-tree-decomposition is
    \[\max_{t \in V(T)} |\alpha(t)| + \ocp(G[\beta(t) \setminus \alpha(t)], \gamma).\]
    The \emph{\tdm-treewidth} of $(G,\gamma,K)$, denoted $\tdmtw(G,\gamma,K)$, is the minimum width of a $\tdm$-tree-decomposition.
\end{definition}

The definition for $\tdm$-treewidth arises naturally out of other width parameters for rooted graphs and signed graphs, see \cref{sec:PropertiesTDMTW}. See \cref{sec:futherDiscussion} for possible modifications of the definition and further discussion.

We are now ready to state our main algorithmic result, where we assume that $w,b,\ell$, and $u$ can be represented in $\mathsf{poly}(n)$ many bits.

\begin{restatable}{theorem}{MainAlgorithmicTheorem}\label{thm:MainAlgorithm}
    Let $d,k$ be nonnegative integers, and let $A$ be a matrix with two nonzero entries per row whose associated rooted signed graph has $\tdm$-treewidth at most $k$. Then for any $w \in \mathbb{Z}^n, b \in \mathbb{Z}^m$, and $\ell,u \in \mathbb{Z}^n$ with $\|u - \ell\|_\infty \leq d$, we can solve the integer program
    \[\max\{w^Tx : Ax \leq b, x \in [\ell,u] \cap  \mathbb{Z}^n\}\]
    in $d^{\mathcal{O}(k)}n^{f(k)}$ time for some computable $f$.
\end{restatable}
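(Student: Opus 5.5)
The plan is dynamic programming over the \tdm-tree-decomposition $(T,\beta,\alpha,J)$, rooted at a vertex of $J$ (or at an arbitrary vertex if $J$ is empty). I assume, as the statement implicitly does, that a decomposition of width $\mathcal{O}(k)$ is available or can be computed in $n^{f(k)}$ time.

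The first step is a reduction to $0/1$ variables outside $J$. Shifting $x$ by $\ell$ gives $x\in[0,u-\ell]$. Every non-root column has entries in $\{-1,0,1\}$. For these columns I would use the standard unary/binary expansion. Each variable with domain $\{0,\dots,d\}$ is replaced by $d$ copies of $0/1$ variables $y_1\ge y_2\ge\dots\ge y_d$, with $x=\sum y_i$. A constraint $\pm x_i\pm x_j\le b$ then becomes a family of two-variable constraints $\pm y_{i,a}\pm y_{j,c}\le 1$ (or $0$), which encode the same feasible set. This blows up vertices by a factor $d$. Bags and protectors are replaced by their expansions, so $|\alpha(t)|$ grows by a factor $d$. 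I must check that $\ocp$ of the bag grows only by a factor depending on $d$ and $k$. This should hold because an odd cycle in the expansion projects to a closed odd walk in the original bag, and from such a walk one extracts an odd cycle; vertex-disjointness is lost by at most a factor $d$. I would keep root variables unexpanded, since they all live in $J$ where protectors are strong.

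The second step is the DP itself. For each node $t$ and each assignment $\phi$ of values to $\alpha(t)\cap\beta(\mathrm{parent})$, together with the (at most one) vertex of the adhesion outside $\alpha(t)$, I compute the best weight of the subtree below $t$. At node $t$, I guess the values of all of $\alpha(t)$: there are $(d+1)^{|\alpha(t)|}\le d^{\mathcal{O}(k)}$ choices. After this guess, the constraints restricted to $\beta(t)\setminus\alpha(t)$ form a problem on a signed graph with $\ocp$ at most $k$ and coefficients in $\{-1,0,1\}$, because all roots lie in $\alpha(t)$. Each child $t'$ touches this residual problem in at most one vertex $v$, or in none when $t\in J$ and the protector is strong. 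The child's table can therefore be folded into a weight on $v$: with $0/1$ variables this is $\mathrm{val}(v{=}1)-\mathrm{val}(v{=}0)$, added to $w_v$. When the adhesion is empty, the child contributes an additive constant, since it depends only on the already-guessed $\alpha(t)$ values. This folding is exactly why non-strong protectors are allowed only outside $J$: there the variables are binary, so one vertex's contribution is a single number. The resulting residual problem is a $\{-1,0,1\}$ two-per-row IP with bounded $\ocp$. I would solve it with the algorithm of Fiorini, Joret, Weltge and Yuditsky, as extended in Choi et al., which runs in $n^{f(k)}$ time. Equivalently, I could reduce it to \mis on a graph with bounded $\ocp$.

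The main obstacle is handling the roots correctly and checking that the residual problem at each bag really fits the hypotheses of \cite{Fiorini2025Integer}. Root columns carry large coefficients, but they are guessed inside $\alpha(t)$. Once guessed, every row containing a root entry degenerates into a bound on its other variable, which is exactly the kind of change to $\ell,u$ noted in the introduction. I would also verify that the variable expansion is applied only outside $J$, so that strong adhesions inside $J$ carry only guessed root/protector values. Since each of these values has $d+1$ options, the tables stay of size $d^{\mathcal{O}(k)}$. Summing over the $\mathcal{O}(n)$ nodes gives $d^{\mathcal{O}(k)}n^{f(k)}$.
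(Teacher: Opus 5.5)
\paragraph{The gap: the expansion does not keep $\ocp$ bounded.}
Your first step fails. You claim the unary expansion raises the $\ocp$ of a bag by at most a factor depending on $d$; this is false, and your sketch does not prove it. Projecting $N$ vertex-disjoint odd cycles of the expansion only gives $N$ odd closed walks that use each original vertex at most $d$ times. That is a $\frac{1}{d}$-integral packing, and for odd cycles such packings do not yield integral ones: Escher walls have bounded $\ocp$ but arbitrarily large half-integral packings.

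Here is a concrete counterexample. Let $G$ be a large non-bipartite quadrangulation of the projective plane, let every row be $x_u+x_v\le 2$, and let every domain be $\{0,1,2\}$.
\begin{itemize}
\item Then $G^+_\bullet(A)=(G,\gamma_1,\varnothing)$. The odd cycles are exactly the one-sided cycles, which pairwise intersect. So a single bag with $\alpha=\varnothing$ and $J$ empty is a \tdm-tree-decomposition of width $1$.
\item Your encoding turns each row into the odd edges $y_{u,1}y_{v,2}$ and $y_{u,2}y_{v,1}$. These form the bipartite double cover $G\times K_2$, which is the lift of $G$ to the sphere. It also adds the even chain edges $y_{v,1}y_{v,2}$, which join antipodal lifts.
\item A path of the double cover from $y_{v,1}$ to $y_{v,2}$, closed by the chain edge, is an odd cycle.
\item The sphere carries arbitrarily many pairwise disjoint arcs, each joining a point to its antipode. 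For example, use stereographic coordinates, where the antipodal map is $z\mapsto -1/\bar z$, and take the spirals $t\mapsto\rho(t)e^{i(\theta_j+\pi t)}$, $t\in[0,1]$, with $\rho$ increasing from $r$ to $1/r$ and distinct $\theta_j$.
\end{itemize}
So for $G$ fine enough, the expanded bag has arbitrarily large $\ocp$. By \cref{lem:deltamodularequivalence}, the expanded matrix is then not totally $\Delta$-modular for any bounded $\Delta$. The residual instance you hand to Fiorini et al.\ therefore no longer has a bounded parameter, and the $n^{f(k)}$ bound is lost. This is exactly the obstruction noted in \cref{sec:futherDiscussion}.

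Separately, after expansion the adhesion outside the protector has $d$ vertices, not one. That could be patched with telescoping weights along the chain, but the $\ocp$ blow-up cannot.

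\paragraph{What the paper does instead.}
The missing idea is that binarization must come from proximity applied to a whole $\{-1,0,1\}$ subinstance, which does not change the underlying signed graph. In the paper this happens inside a black box and is never done by hand.
\begin{itemize}
\item The paper uses \cref{lem:equivalenceOfDecompositions} to get $\mathsf{tw}_K(G,K)\le k-1$.
\item It computes a tree $K$-free-decomposition of width $\mathcal{O}(k)$ with \cref{thm:FPTapproxForKfreeTW}.
\item It runs an ordinary bounded-width DP over assignments to the $\mathcal{O}(k)$ variables of $\beta(t)\setminus L$. Vertices of $L$ occur in a single leaf bag, so no adhesion contains them.
\item At a leaf, once these values are fixed, the remaining variables lie in $L$ and are non-roots. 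The residual program therefore has coefficients in $\{-1,0,1\}$ and \tocp-treewidth at most $k$, and \cref{thm:IPsolutionForForbiddingParityGrids} solves it for arbitrary bounds.
\end{itemize}

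\paragraph{The running-time bound.}
Your route would miss the $d^{\mathcal{O}(k)}$ factor even without the expansion problem. The statement supplies no decomposition. The computable one from \cref{thm:FPTapproxForTDMTW} has width $\mathsf{poly}(k)$, so guessing whole protectors costs $d^{\mathsf{poly}(k)}$. The paper guesses $d$-ary values only on $K$-free bags of size $\mathcal{O}(k)$. It leaves the protectors inside leaves to a black box whose running time does not depend on $d$.
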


We prove \cref{thm:MainAlgorithm} in \cref{sec:SolvingIP} via dynamic programming and a reduction to \mis. We note that in the above theorem we do not require a bound on $\|A\|_\infty$, the largest entry of $A$ in absolute value. We do however require a bound on $\|u - \ell\|_\infty$, which is natural for many combinatorial problems. This need arises out of the lack of a proximity result like the one of Cook et al.~\cite[Theorem~1]{cook1986sensitivity} (see also \cite{paat2020distances, celaya2022improving}) for matrices which are only ``locally" totally $\Delta$-modular. In particular, the algorithm is still poly-time for $d \in \mathsf{poly}(n)$, which is in line with the bound of $d = n\Delta$ we would get for totally $\Delta$-modular matrices by the previously mentioned proximity result of Cook et al.

We also give a grid theorem for having bounded $\tdm$-treewidth. We first define the relevant grids. The \emph{cylindrical $(n \times m)$-grid} is $P_n \Box C_m$, the Cartesian (or box) product of the path on $n$ vertices $P_n$ and the cycle $C_m$. Let $\gamma_0, \gamma_1$ be the functions which are identically equal to 0 or 1 respectively.

\begin{definition}[Parity Handle $\mathcal{H}_k$ and parity vortex $\mathcal{V}_k$]
    Let $W$ be the cylindrical $(k \times 4k)$-grid which we embed naturally in the plane such that the cycles $C_{4k}$ are concentric. Let $X = \{x_1, \dots, x_{2k}\}$ consist of every other vertex along the outer face in clockwise order.
    
    The \emph{unsigned, unrooted parity handle} $\mathbf{H}_k$ of order $k$ is the graph obtained from $W$ by adding the edges $x_ix_{2k-i+1}$ for all $i \in [k]$. The \emph{parity handle} $\mathcal{H}_k$ of order $k$ is the rooted signed graph $(\mathbf{H}_k,\gamma_1, \varnothing)$.
    
    The \emph{unsigned, unrooted parity vortex} $\mathbf{V}_k$ of order $k$ is the graph obtained from $W$ by adding the edges $x_{2i-1}x_{2i}$ for all $i \in [k]$. The \emph{parity vortex} $\mathcal{V}_k$ of order $k$ is the rooted signed graph $(\mathbf{V}_k,\gamma_1, \varnothing)$.

    See \cref{fig:ParityGrids} for examples.
\end{definition}

\begin{figure}
    \centering
    \begin{subfigure}[b]{0.75\textwidth}
    \centering
    \resizebox{!}{5.5cm}{
        \begin{tikzpicture}[
            mystyle/.style={circle,draw,fill=black,minimum size=5}
        ]
        
        \pgfmathsetmacro{\w}{4}
        \pgfmathsetmacro{\l}{19}
        \pgfmathsetmacro{\lp}{\l + 1}
        \pgfmathsetmacro{\wm}{\w - 1}
        
        \node (o1) at ($(0,0) + (0:12)$) {};
        \node (o2) at ($(0,0) + (180:12)$) {};
        
        \foreach \y in {0,...,\w} {
            \pgfmathsetmacro{\r}{\y + 6}
            \draw[fill=none] (o1) circle (\r);
            \draw[fill=none] (o2) circle (\r);
        }
        
        \foreach \x in {0,...,\l} {
            \pgfmathsetmacro{\angle}{\x * 360 / \lp}
            \foreach \y in {0,...,\w} {
                \pgfmathsetmacro{\r}{\y + 6}
                \node[mystyle] (\x_\y_1) at ($(o1) + (\angle:\r)$) {};
                \node[mystyle] (\x_\y_2) at ($(o2) + (\angle:\r)$) {};
            }
        }
        
        \foreach \x in {0,...,\l} {
            \foreach \y [count=\yi] in {0,...,\wm} {
                \draw[color=black] (\x_\y_1) -- (\x_\yi_1);
                \draw[color=black] (\x_\y_2) -- (\x_\yi_2);
            }
        }
        
        \foreach \x in {0,...,\w} {
            \pgfmathtruncatemacro{\everyotherx}{2*\x+1}
            \pgfmathtruncatemacro{\oppx}{\lp-\everyotherx}
            \pgfmathtruncatemacro{\angle}{\w-\everyotherx + 1}
            \pgfmathtruncatemacro{\outangle}{-90 - 10*\angle}
            \pgfmathtruncatemacro{\inangle}{90 + 10*\angle}
            \draw[red, line width=1mm] (\everyotherx_0_2) to[in=\inangle, out=\outangle] (\oppx_0_2);
        }
        \foreach \x in {0, ..., \w} {
            \pgfmathtruncatemacro{\firstx}{4*\x}
            \pgfmathtruncatemacro{\nextx}{4*\x + 2}
            \pgfmathsetmacro{\firstangle}{\firstx * 360 / \lp}
            \pgfmathsetmacro{\nextangle}{\nextx * 360 / \lp}
            \pgfmathsetmacro{\inang}{\nextangle - 150}
            \pgfmathsetmacro{\outang}{\firstangle + 150}
            \draw[red, line width=1mm] (\firstx_0_1) to[in=\inang, out=\outang] (\nextx_0_1);
        }
        \end{tikzpicture}
    }
    \caption{The parity handle of order 5 (left) and the parity vortex of order 5 (right). Both graphs are shifting equivalent to the graph with odd edges exactly equal to the red edges.}
    \label{fig:ParityGrids}
    \end{subfigure}
    \\[5mm]
    \begin{subfigure}[b]{0.75\textwidth}
    \centering
    \resizebox{!}{5.5cm}{
        \begin{tikzpicture}[
            mystyle/.style={circle,draw,fill=black,minimum size=5}
        ]
        
        \pgfmathsetmacro{\w}{8}
        \pgfmathtruncatemacro{\wm}{\w - 1}
        
        \foreach \x in {1,...,\w} {
            \foreach \y in {1,...,\wm} {
                \node[mystyle] (v\x\y) at (2*\x,2*\y) {};
            }
        }
        \foreach \x in {1,...,\w} {
            \foreach \y in {\w} {
                \node[mystyle, fill=red, red] (v\x\y) at (2*\x,2*\y) {};
            }
        }
        
        \foreach \x in {1,...,\wm} {
            \foreach \y in {1,...,\w} {
                \draw[line width=0.8mm] (v\x\y) -- (v\the\numexpr\x+1\relax\y);
            }
        }
        
        \foreach \x in {1,...,\w} {
            \foreach \y in {1,...,\wm} {
                \draw[line width=0.8mm] (v\x\y) -- (v\x\the\numexpr\y+1\relax);
            }
        }
        \end{tikzpicture}
    }
    \caption{The rooted grid of order 8. The vertices in red are the rooted vertices $K_{\mathbf{W}_k}$. All cycles of the rooted grid are even.}
    \label{fig:RootedGrid}
    \end{subfigure}
\end{figure}

\begin{definition}[Rooted Grid $\mathcal{W}_k$]\label{def:rootedGrid}
    Let $\mathbf{W}_k$ be the $(k \times k)$-grid and let $K_{\mathbf{W}_k}$ be the vertex set of the first row of $\mathbf{W}_k$. The rooted grid $\mathcal{W}_k$ of order $k$ is the rooted signed graph $(\mathbf{W}_k, \gamma_0, K_{\mathbf{W}_k})$. See \cref{fig:RootedGrid} for an example.
\end{definition}

        
        
        
        

We can now state our main structural result.

\begin{restatable}{theorem}{MainGridTheorem}\label{thm:MainGridTheorem}
    There exists a polynomial $f: \mathbb{N} \rightarrow \mathbb{N}$ such that for every integer $k \geq 1$ and every rooted signed graph $(G,\gamma,K)$,
    \begin{enumerate}
        \item if $(G,\gamma,K)$ contains one of $\mathcal{H}_k, \mathcal{V}_k$, or $\mathcal{W}_k$ as a minor, then $\tdmtw(G,\gamma,K) \geq \Omega(k)$, and
        \item if $\tdmtw(G,\gamma,K) \geq f(k)$, then $G$ contains one of $\mathcal{H}_k,\mathcal{V}_k,$ or $\mathcal{W}_k$ as a minor.
    \end{enumerate}
\end{restatable}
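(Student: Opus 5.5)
The proof splits into the two directions of \cref{thm:MainGridTheorem}. The plan is to compare $\tdmtw$ with two known parameters and invoke their grid theorems. The first is the $\ocp$-treewidth of Choi et al.~\cite{ChoiGKMW2025OCPtw}, which is a tree decomposition of signed graphs with protectors and bounded $\ocp$ per bag; its grid theorem has $\mathcal{H}_k$ and $\mathcal{V}_k$ as obstructions. The second is a rooted treewidth parameter whose obstruction is the rooted grid $\mathcal{W}_k$, in the spirit of the Robertson--Seymour result that bounded treewidth "relative to $K$" is obstructed exactly by a large grid with many roots.

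\textbf{Lower bound (part 1).} First I will show that $\tdmtw$ is minor-monotone up to a constant factor, and more precisely that deleting and contracting edges, deleting vertices, and shifting (resigning at a vertex) do not increase the width. For contraction, the contracted vertex is placed in the bags of both endpoints. Roots of the minor are images of branch sets containing roots, so they remain inside $\bigcup_{j\in V(J)}\beta(j)$ and inside the protectors. Once monotonicity holds, it suffices to bound the width of each grid from below.
\begin{itemize}
\item For $\mathcal{H}_k$ and $\mathcal{V}_k$, forgetting $K$ and $J$ turns a $\tdm$-tree-decomposition into an $\ocp$-tree-decomposition of $(G,\gamma)$ of the type used in~\cite{ChoiGKMW2025OCPtw}. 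The lower bound $\Omega(k)$ proved there then applies directly.
\item For $\mathcal{W}_k$, the signing is even, so $\ocp=0$ and the width equals $\max|\alpha(t)|$. Every root lies in some bag of $J$, and the bags of $J$ have strong protectors. I will argue that the sub-decomposition of $J$, together with the protectors, gives a separation structure in which every part meets at most $w$ roots and every adhesion has size at most $w$. Since the $k$ roots on the first row are well-linked in the grid, a standard well-linkedness/balanced-separator argument then gives $w=\Omega(k)$.
\end{itemize}

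\textbf{Upper bound (part 2).} I will argue by contrapositive. Suppose $(G,\gamma,K)$ has none of $\mathcal{H}_k,\mathcal{V}_k,\mathcal{W}_k$ as a minor.
\begin{enumerate}
\item Excluding $\mathcal{W}_k$ means $K$ has no large well-linked set. A rooted grid theorem, of the kind that says a large $K$-well-linked set yields a grid minor with roots on one row, then gives a tree decomposition $(T_0,\beta_0)$ of a subgraph containing $K$ with adhesions and bag intersections with $K$ bounded by $\mathsf{poly}(k)$. This tree is the candidate for $J$. The strong protector of a bag is its root set plus its adhesions, which has bounded size.
\item The components of $G$ hanging off this structure attach through bounded-size separators. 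Each such component is root-free, so applying the $\ocp$-treewidth grid theorem of~\cite{ChoiGKMW2025OCPtw} (which excludes $\mathcal{H}_k,\mathcal{V}_k$) gives a decomposition with protectors and bounded $\ocp$ per bag.
\item I will glue these decompositions onto $T_0$, adding the attachment separator to every protector along the way so that adhesions outside protectors stay of size at most one. This keeps the width polynomial in $k$.
\end{enumerate}

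\textbf{Expected obstacle.} The hardest step is the gluing together with the strong-protector requirement on $J$. Bags of $J$ must contain all roots and must have all adhesions inside the protector, including adhesions to the root-free pieces. I would first try to make the rooted decomposition "tight": compute a $K$-rooted separation of order $\mathsf{poly}(k)$ for each component of $G-\beta_0(t)$, so that each attaching piece meets $\beta_0(t)$ in a bounded set. The remaining difficulty is that there can be unboundedly many attaching pieces, and these may jointly contribute many odd cycles inside $\beta_0(t)$. I expect to handle this by bounding $\ocp(G[\beta_0(t)\setminus\alpha(t)])$ using Erdős--Pósa for odd cycles on the bag, at a polynomial loss.
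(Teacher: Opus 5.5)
The genuine gap is in the upper bound. Your lower bound matches the paper in substance. The paper proves $\max\{\mathsf{tw}_K(G,K)+1,\tocptw(G,\gamma)\}\le\tdmtw(G,\gamma,K)$, then uses minor-monotonicity of $\mathsf{tw}_K$ and $\tocptw$ together with the well-linkedness argument of \cref{lem:rootedGridLargeKFreeTW}; that is what you sketch for $\mathcal{W}_k$. Routing through these two parameters also avoids a point you gloss over. After contracting an even edge $uv$ with $u\in K$ and $v\notin K$, the new root lands in bags that contained $v$ outside the protector but not $u$, so ``roots stay inside the protectors'' is not automatic.

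In step 1 you extract a tree decomposition with bounded adhesions in which each bag meets $K$ in boundedly many vertices. This cannot suffice, because it does not even certify the absence of $\mathcal{W}_k$. Take a large grid $M$ signed by $\gamma_0$ and attach a pendant root $r_v$ to every $v\in V(M)$. The star decomposition with central bag $V(M)$ and leaves $\{v,r_v\}$ has adhesions of size one and at most one root per bag. Yet contracting the pendant edges along the first row gives a large $\mathcal{W}_k$, so $\tdmtw$ is large.

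The step that breaks is ``the strong protector of a bag is its root set plus its adhesions, which has bounded size''. A bag of $J$ can have unboundedly many neighbours with distinct adhesions; in the example their union is all of $V(M)$, and strongness forces all of them into $\alpha(j)$. Tightening the separations does not help, since each attached piece already meets the bag in a single vertex. Erd\H{o}s--P\'osa is beside the point: the example has no odd cycles, and odd cycles do not have the Erd\H{o}s--P\'osa property in general anyway.

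The missing idea is to use the rooted grid theorem in its $K$-free treewidth form, \cref{thm:forbiddingRootedGrid}. This needs a parity step you also omit: by \cref{lem:totallyEvenRootedGrid}, excluding $\mathcal{W}_k$ excludes the unsigned rooted grid $(\mathbf{W}_{k^2},K_{\mathbf{W}_{k^2}})$. A tree $K$-free-decomposition $(J,\beta_J,L)$ bounds the \emph{total} size of each $\beta_J(j)\setminus L$, and all unbounded material consists of non-roots private to a single leaf.

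Set $\beta(j)=\alpha(j)=\beta_J(j)\setminus L$ for $j\in V(J)$; this gives strong protectors for free with zero \ocp\ term. For each leaf $j$, hang a tame \ocp-tree-decomposition of the root-free graph $G[\beta_J(j)\cap L]$, with $\beta_J(j)\setminus L$ added to every bag and protector; this is exactly your steps 2 and 3. The result is $\tdmtw(G,\gamma,K)\le\mathsf{tw}_K(G,K)+1+\tocptw(G,\gamma)$, polynomial in $k$, with no gluing obstacle left.
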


\cref{thm:MainGridTheorem} and \cref{thm:MainAlgorithm} give the following immediate corollary.

\begin{corollary}\label{cor:MainAlgorithmGridVersion}
    Let $d,k$ be nonnegative integers, and let $A$ be a matrix with two nonzero entries per row whose associated rooted signed graph forbids $\mathcal{H}_k, \mathcal{V}_k$, and $\mathcal{W}_k$ as minors. Then for all $w \in \mathbb{Z}^n$, $b \in \mathbb{Z}^m$, and $\ell,u \in \mathbb{Z}^n$ with $\|u - \ell\|_\infty \leq d$, we can solve the integer program
    \[\max\{w^Tx : Ax \leq b, x \in [\ell,u]\cap \mathbb{Z}^n\}\]
    in $d^{\mathsf{poly}(k)}n^{f(k)}$ time for some computable $f$.
\end{corollary}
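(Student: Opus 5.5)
The corollary follows by chaining the second part of \cref{thm:MainGridTheorem} with \cref{thm:MainAlgorithm}. The only real care needed is in the parameter bookkeeping.

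\textbf{Step 1 (bound the \tdm-treewidth).} Let $(G,\gamma,K)=G^+_\bullet(A)$ be the rooted signed graph of $A$. By hypothesis it contains none of $\mathcal{H}_k,\mathcal{V}_k,\mathcal{W}_k$ as a minor. The second item of \cref{thm:MainGridTheorem} is an implication, so its contrapositive applies: if $\tdmtw(G,\gamma,K)\geq g(k)$, where $g$ is the polynomial from that theorem, then one of the three grids would be a minor. Hence $k':=\tdmtw(G,\gamma,K)<g(k)$, so $k'$ is bounded by a polynomial in $k$.

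\textbf{Step 2 (run the algorithm).} Apply \cref{thm:MainAlgorithm} with parameters $d$ and $k'$. It solves the integer program $\max\{w^Tx : Ax \leq b, x \in [\ell,u]\cap \mathbb{Z}^n\}$ in time
\[
d^{\mathcal{O}(k')}\,n^{h(k')} \leq d^{\mathcal{O}(g(k))}\,n^{h(g(k))}
\]
for the computable function $h$ of that theorem. The $d$-factor is $d^{\mathsf{poly}(k)}$ because $g$ is a polynomial. The exponent of $n$ is $f(k):=\max_{j\leq g(k)} h(j)$, which is computable because $g$ and $h$ are. Taking the maximum also makes $f$ monotone, so the bound holds for whichever value $k'<g(k)$ actually occurs.

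\textbf{Step 3 (possible obstacle).} The one point to verify is that the algorithm of \cref{thm:MainAlgorithm} does not need a \tdm-tree-decomposition as input. If its statement implicitly assumes a decomposition is given, we must compute one of width bounded in $k$ in the stated time. I would expect the proof of \cref{thm:MainGridTheorem} to be constructive enough to provide one, or else there is an approximation algorithm for \tdm-treewidth running in time $n^{\mathcal{O}(\mathsf{poly}(k))}$. Either way, the approximate width is again $\mathsf{poly}(k)$, and the time bound keeps the form $d^{\mathsf{poly}(k)}n^{f(k)}$ after enlarging $f$. Since the paper states the corollary as immediate, I take it that \cref{thm:MainAlgorithm} already accounts for finding the decomposition, and the argument is just the composition above.
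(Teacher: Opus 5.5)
Your proposal is correct and matches the paper: the corollary is stated as an immediate consequence of the contrapositive of part 2 of \cref{thm:MainGridTheorem}, which gives $\tdmtw(G,\gamma,K) < \mathsf{poly}(k)$, composed with \cref{thm:MainAlgorithm}, and your bookkeeping (monotone $f(k)=\max_{j\leq g(k)}h(j)$, $d^{\mathcal{O}(g(k))}=d^{\mathsf{poly}(k)}$) is exactly what is needed. Your Step 3 worry does not arise, because the proof of \cref{thm:MainAlgorithm} computes everything it needs itself: it obtains a tree $K$-free-decomposition via \cref{thm:FPTapproxForKfreeTW} and applies \cref{thm:IPsolutionForForbiddingParityGrids} as a black box on the leaf bags, so no \tdm-tree-decomposition has to be supplied as input.
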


Our work is towards the following problem posed in \cite{ChoiGKMW2025OCPtw}.

\begin{question}[Question 10.10~\cite{ChoiGKMW2025OCPtw}]\label{ques:LargestClassOfMatrices}
    What is the most general class of matrices with two nonzero entries per row such that we can solve the integer program $\max\{w^T x : Ax \leq b, x \in [\ell,u] \cap \mathbb{Z}^n\}$ in polynomial time for all matrices A in the class?
\end{question}

This question seeks to generalize the below question which is central to algorithmic graph theory.

\begin{question}\label{ques:MIS}
    What is the most general class of graphs for which we can solve \mis (\textsc{MIS}) in polynomial time for all graphs in the class?
\end{question}

In order to make the above questions tractable, it is natural to only consider classes which are closed under certain operations. For example, if we ask \cref{ques:MIS} for hereditary graph classes, then there has been a long line of research on solving \textsc{MIS} over graph classes defined by forbidden induced subgraphs \cite{grotschel1984polynomial, minty1980maximal, alekseev2004polynomial, lozin2008polynomial, corneil1981complement, farber1993upper}.

More relevant to this work is when we force the classes to be closed under the minor relation. Then \cref{ques:MIS} is answered by the famous Grid Theorem of Robertson and Seymour~\cite{RobertsonS1986Graph5}, which implies that \textsc{MIS} can be solved on a minor-closed class of graphs if and only if the class excludes a planar graph. If we restrict our matrices in \cref{ques:LargestClassOfMatrices} to be in $\{0, 1\}^{m \times n}$ and use the typical minor relation on $G(A)$, then \cref{ques:LargestClassOfMatrices} is equivalent to \cref{ques:MIS}.

If we instead restrict our matrices in \cref{ques:LargestClassOfMatrices} to be over $\{-1,0,1\}$, it is natural to associate such matrices with signed graphs and to force classes to be closed under the signed graph minor relation (see \cref{sec:signedGraphs} for relevant definitions). Historically, signed graphs have been studied in relation to matroids and geometry \cite{zaslavsky1982signed, zaslavsky1991orientation, zaslavsky2013signed}, but they have recently found relations to integer programming \cite{ChoiGKMW2025OCPtw, kober2025totally}. \cref{ques:LargestClassOfMatrices} for signed graph minor closed classes of matrices with entries in $\{-1,0,1\}$ then again reduces to \cref{ques:MIS} for graph classes closed under odd-minors, see \cite[Theorem 10.3]{ChoiGKMW2025OCPtw} based on \cite[Section 3]{Fiorini2025Integer}. We discuss the relation between odd-minors and signed graphs in \cref{sec:oddMinorsAndSignedGraphs}. Choi et al.~\cite{ChoiGKMW2025OCPtw} showed that \textsc{MIS} (and hence the analogous integer programming problem) is solvable over classes which exclude both a planar graph with at most two odd faces and a planar graph with all odd cycles touching a common face (that is, $\mathbf{H}_k$ and $\mathbf{V}_k$ for some $k$) as an odd-minor. This generalized an array of previous work on the \textsc{MIS} problem in odd-minor-closed graph classes \cite{eiben2021measuring, jansen2021vertex, jansen2021fpt, gollin2023structure, jaffke2025dynamic, Fiorini2025Integer}. Other notable work on the \textsc{MIS} in odd-minor-closed graph classes include \cite{gerards1989min, tazari2012faster}. The problem is only known to be hard when the class contains all planar graphs, and so an exact hardness threshold is not known for this case.

We instead pose \cref{ques:LargestClassOfMatrices} for matrices which may have entries outside of $\{-1, 0, 1\}$. We again force our classes to be minor closed, but it is not obvious which minor operation to choose. A matrix with two nonzero entries per row is totally $\Delta$-modular if and only if it has bounded entries, a bounded number of columns with entries outside $\{-1,0,1\}$, and the associated signed graph has bounded odd cycle packing (see \cref{lem:deltamodularequivalence}). Motivated by this equivalence, we propose representing such matrices as rooted signed graphs as defined above. We can then naturally encode the minor relation as \emph{rooted signed graph minors} (see \cref{sec:rootedSignedGraphs} for relevant definitions), which allows us to apply results on rooted minors and rooted tree decompositions~\cite{MarxSW2017Rooted, JansenS2024steiner, HodorLMR2024quickly}.

The best known results for matrices with two nonzero entries per row and entries possibly outside $\{-1,0,1\}$ are given by the following results which more generally apply to matrices with an arbitrary number of nonzero entries per row. The \emph{primal graph} of a matrix $A$ is a graph with vertex set equal to the columns of $A$ where two columns are adjacent if and only if there exists a row where both columns are nonzero. If we assume the domain size $\|u - \ell\|_\infty$
is bounded, then we can solve the integer program when the primal graph has bounded treewidth \cite{Jansen2015structural,Freuder1990Complexity}. If we instead bound $\|A\|_\infty$, then we can solve the integer program when the corresponding graph has bounded treedepth\footnote{A graph has bounded \emph{treedepth} if it has a tree decomposition $(T,\beta)$ of bounded width such that $T$ has bounded height.} \cite{eisenbrand2019algorithmic, GanianOR2017goingbeyond, ganian2018complexity}. Integer programming remains NP-hard on matrices with $\|A\|_\infty$ bounded and where the primal graph has bounded treewidth \cite{GanianOR2017goingbeyond}, so treedepth can not be relaxed to treewidth. Similarly, integer programming remains NP-hard on matrices where the primal graph has bounded treedepth and $\|A\|_\infty$ is unbounded \cite{Dvorak2021complexity}. If we consider the matroid represented by $A$, integer programming remains NP-hard even when the matroid has bounded branch-width and the entries are in $\{-1,0,1\}$ \cite{cunningham2007integer}. However, all of these reductions \cite{GanianOR2017goingbeyond, Dvorak2021complexity, cunningham2007integer} (as well as others \cite{knop2020tight, eiben2019integer, brianski2025integer}) rely on matrices with at least 3 nonzero entries per row. To the best of the author's knowledge, the only known hardness result for matrices with at most two nonzero entries per row is the reduction to \textsc{Maximum (Weighted) Independent Set}.

The above results imply that if $\|u - \ell\|_\infty$ is bounded and if $A$ is a matrix with two nonzero entries per row such that $G(A)$ has bounded treewidth, then we can solve the integer program in polynomial time. \cref{thm:MainAlgorithm} generalizes both this result and the result of Choi et al.~\cite{ChoiGKMW2025OCPtw} (see \cref{thm:IPsolutionForForbiddingParityGrids}) when $\|u - \ell\|_\infty$ is bounded.

The paper is organized as follows. In \cref{sec:Preliminaries}, we give the relevant background on signed graphs, rooted graphs, and their connection to totally $\Delta$-modular matrices. In \cref{sec:Kfreetw}, we discuss $K$-free treewidth, as introduced in \cite{JansenS2024steiner}, and a grid theorem for rooted minors. In \cref{sec:ocptw}, we discuss \ocp-treewidth, as introduced in \cite{ChoiGKMW2025OCPtw}, and its applications to integer programming. In \cref{sec:PropertiesTDMTW}, we discuss the connection between $K$-free treewidth, \ocp-treewidth, and \tdm-treewidth. We use this connection to give an FPT-approximation algorithm for the \tdm-treewidth of a graph, and we prove \cref{thm:MainGridTheorem}. In \cref{sec:SolvingIP}, we prove \cref{thm:MainAlgorithm}. Finally, in \cref{sec:futherDiscussion}, we discuss potential future work and open questions.

\section{Preliminaries}\label{sec:Preliminaries}
We use $[k]$ to denote the set $\{1, 2, \dots, k\}$. By $\|A\|_\infty$ we denote the largest absolute value among all entries of the matrix $A$. All graphs may have parallel edges but not loops. We denote by $G[X]$ the graph induced on $X$ for $X \subseteq V(G)$. For $V_1, V_2$ a partition of $V(G)$, we denote by $[V_1,V_2]$ the set of all edges with one end in $V_1$ and one end in $V_2$. We call such a $[V_1,V_2]$ an \emph{edge cut}.


\begin{definition}[tree decomposition]
    A \emph{tree decomposition} of a graph $G$ is a pair $(T, \beta)$ where $T$ is a tree and $\beta: V(T) \rightarrow 2^{V(G)}$ such that
    \begin{enumerate}
        \item $\bigcup_{t \in V(T)} \beta(t) = V(G)$,
        \item for every edge $e \in E(G)$, there exists $t \in V(T)$ such that both ends of $e$ are contained in $\beta(t)$, and
        \item for every $v \in V(G)$, the subgraph of $T$ induced by $\{t \in V(T) : v \in \beta(t)\}$ is connected.
    \end{enumerate}
    The \emph{width} of a tree decomposition $(T, \beta)$ is $\max_{t \in V(T)} |\beta(t)| - 1$. The \emph{treewidth} of a graph $G$ is the minimum width of a tree decomposition of $G$. We call $\beta(t)$ the \emph{bag} of $t$. Each edge $tt' \in E(T)$ corresponds to an \emph{adhesion} $\beta(t) \cap \beta(t')$.
\end{definition}

\begin{definition}[grid]\label{def:grid}
    The $k \times k$ grid is the graph with vertex set $[k]^2$ such that $(i,j)$ is adjacent to $(i',j')$ if and only if $|i - i'| + |j - j'| = 1$.
\end{definition}

\subsection{Signed graphs}\label{sec:signedGraphs}
A signed graph is a pair $(G, \gamma)$ where $G$ is a graph and $\gamma: E(G) \rightarrow \mathbb{Z}_2$. For $e \in E(G)$, if $\gamma(e) = 0$ the edge is said to be \emph{even}, and if $\gamma(e) = 1$ the edge is said to be \emph{odd}. The \emph{parity} of a path or cycle $C$ in $G$ is $\sum_{e \in E(C)} \gamma(e)$, and a cycle is said to be \emph{even} or \emph{odd} if the parity is 0 or 1 respectively. The \emph{odd cycle packing number (\ocp)} of a signed graph $(G,\gamma)$, denoted $\ocp(G,\gamma)$ is the size of a maximum collection of pairwise vertex-disjoint odd cycles with respect to $\gamma$.

We commonly use $\gamma_1$ to denote the function which maps every edge to $1 \in \mathbb{Z}_2$ where the domain of the function is clear from context. Similarly we use $\gamma_0$ to denote the function which maps everything to $0 \in \mathbb{Z}_2$.

Given a signed graph $(G, \gamma)$, \emph{shifting at a vertex} $v \in V(G)$ replaces $\gamma(e)$ with $\gamma(e) + 1$ for each edge $e$ incident to $v$. We say $(G,\gamma')$ is obtained from $(G,\gamma)$ by \emph{shifting} if it is obtained by a sequence of shiftings at vertices. Note that if $(G,\gamma')$ is a shifting of $(G,\gamma)$, then every cycle in $(G,\gamma')$ has the same parity as in $(G,\gamma)$. The converse is also true: if $(G,\gamma')$ and $(G,\gamma)$ have the same parity for every cycle, then $(G,\gamma')$ can be obtained from $(G,\gamma)$ by shifting. Hence it is often natural to think of signed graphs under the equivalence relation given by shifting.

A signed graph $(H, \gamma_H)$ is a \emph{(signed graph) minor} of $(G, \gamma_G)$ if it can be obtained by vertex deletion, edge deletion, shifting, and contracting even edges. Because we may only contract even edges, every cycle in $(H,\gamma_H)$ naturally corresponds to a cycle in $(G,\gamma_G)$ of the same parity. Equivalently, $(H,\gamma_H)$ is a minor of $(G,\gamma_G)$ if there exists a \emph{minor model} of $(H,\gamma_H)$ in $(G,\gamma_G)$. That is, if there exist
\begin{enumerate}
    \item a mapping $\varphi_V$ from $V(H)$ to pairwise vertex disjoint trees in $G$,
    \item an injection $\varphi_E:E(H) \rightarrow E(G)$ such that $\varphi_E(uv)$ has one end in $\varphi_V(u)$ and one end in $\varphi_V(v)$ for each $uv \in E(H)$, and
    \item a shifting $(G,\gamma_G')$ of $(G,\gamma_G)$ such that
    \begin{enumerate}
        \item for all $v \in V(H)$, the edges of the tree $\varphi_V(v)$ all have label 0, and
        \item for every $e \in E(H)$, $\gamma_G'(\varphi_E(e)) = \gamma_H(e)$.
    \end{enumerate}
\end{enumerate}
Note that if $(H,\gamma_H)$ is a signed graph minor of $(G,\gamma_G)$ then $H$ is a minor of $G$ in the usual sense.

A signed graph $(G,\gamma_G)$ contains a signed graph $(H, \gamma_H)$ as a \emph{subdivision} if there exists $\gamma_H', \varphi_V, \varphi_E$ such that
\begin{enumerate}
    \item $(H,\gamma'_H)$ is a shifting of $(H,\gamma_H)$,
    \item $\varphi_V: V(H) \rightarrow V(G)$ is an injection, and
    \item $\varphi_E$ is a mapping from edges of $H$ to internally vertex-disjoint paths in $G$ such that
    \begin{enumerate}
        \item $\varphi_E(uv)$ is a path with ends $\varphi_V(u),\varphi_V(v)$ and
        \item the parity of the path $\varphi_E(uv)$ in $(G,\gamma)$ is equal to $\gamma'_H(uv)$.
    \end{enumerate}
\end{enumerate}
Note that if $(G,\gamma_G)$ contains $(H,\gamma_H)$ as a subdivision, then $(G,\gamma_G)$ contains $(H,\gamma_H)$ as a minor.

We call the cycles of length 4 in a grid the \emph{cells}. A signed $n \times m$ grid is said to be \emph{even} if all of its cells are even. Note that if $(W,\gamma)$ is an even grid, then $(W,\gamma_0)$ is a shifting. We now show that in any signed grid, we can always find a large even grid.

\begin{figure}[h]
    \centering
    \scalebox{0.4}{
        \begin{tikzpicture}[mystyle/.style={circle,draw,fill=black}]
            \node[mystyle, label={[font=\huge]120:$(f(1),f(1))$}] (a1_b1) at (1,-1) {};
            \node[mystyle, label={[font=\huge]90:$(f(1),f(2))$}] (a1_b2) at (12,-1) {};
            \node[mystyle, label={[font=\huge]90:$(f(1),f(3))$}] (a1_b3) at (23,-1) {};
            \node[mystyle, label={[font=\huge]180:$(f(2),f(1))$}] (a2_b1) at (1,-12) {};
            \node[mystyle] (a2_b2) at (12,-12) {};
            \node[mystyle] (a2_b3) at (23,-12) {};
            \node[mystyle, label={[font=\huge]180:$(f(3),f(1))$}] (a3_b1) at (1,-23) {};
            \node[mystyle] (a3_b2) at (12,-23) {};
            \node[mystyle] (a3_b3) at (23,-23) {};
    
            \draw[line width = 0.8mm] (a1_b1) -- (a1_b3) -- (25,-1);
            \draw[line width = 0.8mm] (a2_b1) -- (a2_b3) -- (25,-12);
            \draw[line width = 0.8mm] (a3_b1) -- (a3_b3) -- (25,-23);
            \draw[line width = 0.8mm] (a1_b1) -- (a3_b1) -- (1,-25);

            \draw[blue, line width = 0.8mm] (2,-2) -- (11,-2) -- (11,-11) -- (2,-11) -- (2,-2);
            \draw[blue, line width = 0.8mm] (13,-2) -- (22,-2) -- (22,-11) -- (13,-11) -- (13,-2);
            \draw[blue, line width = 0.8mm] (2,-13) -- (11,-13) -- (11,-22) -- (2,-22) -- (2,-13);
            \draw[blue, line width = 0.8mm] (13,-13) -- (22,-13) -- (22,-22) -- (13,-22) -- (13,-13);

            \node[blue] (H12) at (6.5, -6.5) {\Huge$H_{1,2}$};
            \node[blue] (H13) at (17.5, -6.5) {\Huge$H_{1,3}$};
            \node[blue] (H22) at (6.5, -17.5) {\Huge$H_{2,2}$};

            \draw[line width = 0.8mm] (a1_b2) -- (12,-2) -- (11,-2);
            \draw[line width = 0.8mm] (a1_b3) -- (23,-2) -- (22,-2);
            \draw[line width = 0.8mm] (a2_b2) -- (12,-13) -- (11,-13);
            \draw[line width = 0.8mm] (a2_b3) -- (23,-13) -- (22,-13);

            \draw[line width = 0.8mm] (a2_b2) -- (12,-11) -- (11,-11);
            \draw[line width = 0.8mm] (a2_b3) -- (23,-11) -- (22,-11);
            \draw[line width = 0.8mm] (a3_b2) -- (12,-22) -- (11,-22);
            \draw[line width = 0.8mm] (a3_b3) -- (23,-22) -- (22,-22);

            \draw[red, line width = 0.8mm] (17,-16) -- (18, -16) -- (18,-17) -- (17,-17) -- (17,-16);
            \node[red] (odd) at (15.5, -16.5) {\huge odd cell};
            \draw[line width = 0.8mm] (22,-22) -- (18,-22) -- (18,-17);
            \draw[line width = 0.8mm] (22,-13) -- (18,-13) -- (18,-16);

            \node[scale=2] (d1) at (27,-6.5) {\Huge \dots};
            \node[scale=2] (d2) at (27,-17.5) {\Huge \dots};
            \node[scale=2] (d3) at (6.5, -24) {\Huge \vdots};
        \end{tikzpicture}
    }
    \caption{The construction in the proof of \cref{lem:totallyevengrid}.}
\end{figure}

\begin{lemma}\label{lem:totallyevengrid}
    Let $k \in \mathbb{N}$. Let $G$ be the $k^2 \times k^2$ grid and let $W$ be the $k \times k$ grid. Then $(G, \gamma)$ contains $(W,\gamma_0)$ as a subdivision for every $\gamma$.
\end{lemma}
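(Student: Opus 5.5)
The plan is to take the branch vertices of $W$ to be a sparse sub-lattice of $G$, and then reroute the subdivision paths so that every face of the subdivided $W$ is an even cycle. Choose an increasing map $f:[k]\to[k^2]$ with consecutive values roughly $k$ apart and some margin left below and to the right. Set $\varphi_V(i,j)=(f(i),f(j))$. Initially let $\varphi_E$ send each edge of $W$ to the straight row- or column-segment of $G$ between the two branch vertices. The resulting subdivision $W'$ of $W$ in $G$ has $(k-1)^2$ bounded faces. Each face bounds a block $H_{i,j}$ of cells of $G$, as in the figure.

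\textbf{Reduction to face parities.} Since $G$ is planar and every cell is a $4$-cycle, the parity of a closed curve in the grid is the sum, mod $2$, of the parities of the cells it encloses. Hence the face cycle of $W'$ around $H_{i,j}$ is odd exactly when $H_{i,j}$ contains an odd number of odd cells. The cycle space of the plane graph $W$ is generated by its bounded face cycles. So if every bounded face of $W'$ is even, then every cycle of $W$ gets even parity under the induced labelling $uv\mapsto$ parity of $\varphi_E(uv)$. By the converse of the shifting fact stated above, this labelling is a shifting of $\gamma_0$, which is exactly the required condition with $\gamma'_H$ equal to that labelling. So it suffices to reroute paths until every block encloses an even number of odd cells.

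\textbf{Parity-transfer operation.} Suppose a block $H_{i,j}$ is currently odd. Then it contains some odd cell $c$. Reroute one boundary path of $H_{i,j}$, for instance the one shared with the block to its right, or below it if $j=k-1$. The new path leaves its old route, runs through the interior of $H_{i,j}$ along a corridor to $c$, goes around $c$, and returns along the same corridor on the adjacent parallel line. This moves $c$, together with the thin corridor, into the neighbouring region. Because the corridor is two lines wide and bounded by parallel grid lines, it encloses no cells other than $c$. The net effect is therefore that $c$ changes sides. This flips the parity of $H_{i,j}$ and of its neighbour, and leaves every other block unchanged.

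\textbf{Ordering the transfers.} Process blocks row by row, left to right. An odd block pushes its odd cell to the right neighbour. In the last column it pushes downward, and in the last block it pushes out of $W'$ entirely, into the unbounded face, whose parity is irrelevant. Since each block is fixed before any later step touches it, all $(k-1)^2$ bounded faces end up even.

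The main obstacle is verifying that these detours can be made vertex-disjoint from each other and from the other subdivision paths, so that $\varphi_E$ stays internally disjoint. This is where the $k^2$ size is used. Each block has side about $k$, and each block receives at most two detours: one incoming region change and one outgoing corridor. I would first try to route every corridor inside a private strip of its block, for example the detour from side $s$ using only rows and columns near that side. I would also choose $c$ to be an odd cell nearest that side, so that the corridor crosses no previously moved cells. Separately, I would check that moving a cell into the neighbour does not destroy the odd cell the neighbour later relies on. If this becomes delicate, the fallback is to pick $c$ adaptively at the time the neighbour is processed, which is harmless because only parity counts matter.
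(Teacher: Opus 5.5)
Your reduction to face parities is correct. The parity-transfer step is where the proof breaks, and the disjointness requirement you describe as a routine verification is actually an obstruction.

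A smaller error first. A path that leaves along one grid line and returns along the adjacent parallel line encloses the whole strip of cells between the two lines. So the region you move is $c$ together with the corridor cells, and its parity is odd only if $c$ is the odd cell of its row nearest the rerouted side.

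The serious problem is that the rerouted path must be internally disjoint from every other subdivision path, and this limits which cells can change sides at all. A cell with an edge on the top, left or bottom path of $H_{i,j}$ can never lie between the old and the new right path. Neither can a cell of a one-cell-wide protrusion received from the left neighbour, since all its vertices lie on the rerouted left path. For a concrete failure, let $k\geq 3$ and let $\gamma$ have a single odd cell, lying in the top row of cells of $H_{2,2}$. Your procedure changes nothing until it reaches $H_{2,2}$. That block is then odd, and no rerouting of its right or bottom path can exclude the cell. Your adaptive fallback does not help, because no other odd cell exists. The same failure occurs whenever a block that has received a protrusion has no movable odd cell of its own, for instance no odd cell at all.

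The missing idea is the dichotomy on which the paper's proof rests. Place the branch vertices exactly $k+1$ apart, with $f(x)=(k+1)(x-1)+1$; this leaves no margin, since $f(k)=k^2$. Then the vertices strictly inside each block form a $k\times k$ grid.
\begin{itemize}
\item If one of these inner grids contains no odd cell, it is itself an even $k\times k$ grid, hence a shifting of $(W,\gamma_0)$, and you are done.
\item Otherwise every block has an odd cell $c$ in its inner grid. Take $c$'s row (for a downward push, $c$'s column) and the odd cell in it nearest the rerouted side. The strip from that cell to the side uses only inner lines, so it avoids all other paths, and it contains exactly one odd cell. Transferring it flips exactly the two intended faces.
\end{itemize}
With this, your row-by-row ordering and your cycle-space certificate complete the proof.

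The paper uses the same dichotomy more directly. An odd cell in the inner grid $H_{a,b}$ gives paths of both parities between two of its corners. The vertical edges of $W$ are routed through these grids one at a time, each time choosing the parity that makes the newly closed cell of $W$ even.
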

\begin{proof}
    Enumerate the vertices of $G$ as $V(G) = \{(i,j) : i,j \in [k^2]\}$ as in \cref{def:grid}. Similarly let the vertices of $W$ be enumerated $(a,b)$ for $a,b \in [k]$. Let $f(x) = (k+1)(x-1) + 1$. We will map the vertex $(a,b)$ in $W$ to the vertex $(f(a), f(b))$ in $G$. For every $b \in [k-1]$, the edge between $(a,b)$ and $(a,b+1)$ will be mapped to the path in $G$ with vertex set $\{f(a), f(b) + \ell) : 0 \leq \ell \leq k+1\}$. For every $a \in [k-1]$, the edge between $(a,1)$ and $(a+1,1)$ will be mapped to the path with vertex set $\{f(a) + \ell, 1) : 0 \leq \ell \leq k+1\}$. It remains to map the edges between $(a,b)$ and $(a+1,b)$ for $b \geq 2$.

    For $a \in \{2,3, \dots, k\}$ and $b \in [k-1]$, let $H_{a,b}$ be the $k \times k$ subgrid of $G$ with opposite corners at $(f(a) + 1, f(b-1) + 1)$ and $(f(a+1) - 1, f(b) - 1)$. We may assume that each $H_{a,b}$ contains an odd cell, otherwise we take $W = H_{a,b}$. We claim that between any two corners of the grid $H_{a,b}$, there exists a path in $H_{a,b}$ of both parities. Indeed, we make take two vertex-disjoint paths from the two corners to the vertices of an odd cell. Then by deciding which way to traverse through the odd cell we may decide the parity of the resulting path.

    We fix $a \in [k-1]$ and define the mapping from the edge between $(a,b)$ and $(a,b+1)$ in order of increasing $b$. The first two vertices of the path are $(f(a) + 1, f(b))$ and $(f(a)+1, f(b)-1)$. The last two vertices of the path are $(f(a+1)-1, f(b)-1)$ and $(f(a+1) - 1, f(b))$. Note that $(f(a) + 1, f(b) - 1)$ and $(f(a+1)-1, f(b)-1)$ are two corners of the grid $H_{a,b}$. Thus we may complete the path through $H_{a,b}$ with a parity of our choosing. We choose the path that forces the cell with vertices $(a,b-1), (a,b), (a+1,b), (a+1,b-1)$ to get mapped to an even cycle. Because the mapping from all other edges of the cell are fixed when choosing the edge between $(a,b)$ and $(a,b+1)$, such a choice is always possible. By choosing the paths in this way, every cell is mapped to an even cycle.
\end{proof}

The above lemma implies that $(G,\gamma)$ forbids an even grid as a signed graph minor if and only if $G$ forbids a grid as a minor. More precisely, we get the following corollary.

\begin{corollary}\label{cor:forbidGridForbidEvenGridEquivalence}
    For any $\gamma$, the following holds.
    If $G$ forbids the $k \times k$ grid as a minor, then $(G,\gamma)$ forbids the $k \times k$ even grid as a signed graph minor.
    Conversely, if $(G, \gamma)$ forbids the $k \times k$ even grid as a signed graph minor, then $G$ forbids the $k^2 \times k^2$ grid as a minor.
\end{corollary}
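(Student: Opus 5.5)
The corollary has two directions, and both follow from basic facts about signed graph minors together with \cref{lem:totallyevengrid}.

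For the first direction we argue by contrapositive. Suppose $(G,\gamma)$ contains the $k \times k$ even grid $(W,\gamma_W)$ as a signed graph minor. As noted after the definition of signed graph minors, this makes the underlying graph $W$ a minor of $G$ in the usual sense. The point is that a signed minor model $(\varphi_V,\varphi_E)$ is in particular an ordinary minor model: forgetting the shifting and the parity conditions leaves disjoint trees joined by the required edges. Hence if $G$ forbids the $k\times k$ grid as a minor, $(G,\gamma)$ forbids the $k \times k$ even grid as a signed minor.

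For the second direction we again argue by contrapositive. Suppose $G$ contains the $k^2\times k^2$ grid $H$ as a minor. We want to transport this into a signed minor of $(G,\gamma)$ containing $(H,\gamma_H)$ for a suitable signing $\gamma_H$, and then apply \cref{lem:totallyevengrid} inside it. Take a minor model $(\varphi_V,\varphi_E)$ of $H$ in $G$. Each branch set $\varphi_V(v)$ is a tree, so its edges form no cycle, and we can shift within the tree to make all its edges even. To do this, root the tree and shift at a vertex whenever the edge to its parent is odd, processing top-down. This does not affect the other trees, since the trees are vertex-disjoint. Call the resulting signing $\gamma'$, and define $\gamma_H(e) := \gamma'(\varphi_E(e))$. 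Then $(\varphi_V,\varphi_E,\gamma')$ is a signed minor model of $(H,\gamma_H)$ in $(G,\gamma)$, possibly after deleting the extra edges.

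By \cref{lem:totallyevengrid}, $(H,\gamma_H)$ contains $(W,\gamma_0)$ as a subdivision, hence as a signed minor, where $W$ is the $k\times k$ grid. It remains to check that signed minors are transitive. This is clear from the operational definition: a sequence of deletions, shiftings and even-edge contractions composed with another such sequence is again such a sequence. So $(G,\gamma)$ contains the $k\times k$ even grid as a signed minor, proving the second statement.

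The only mildly delicate step is the shifting argument that makes each branch tree even without disturbing the others. It works because shifting at a vertex only changes edges incident to that vertex. It is also harmless for the rest of the model: shifting changes the signs of the connecting edges $\varphi_E(e)$ too, but we simply record the resulting signs as $\gamma_H$.
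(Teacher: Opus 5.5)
Your proposal is correct and matches the argument the paper intends; the paper gives no separate proof and only says the corollary follows from \cref{lem:totallyevengrid}. The first direction uses that a signed minor model is an ordinary minor model; the second turns an ordinary $k^2\times k^2$ grid model into a signed minor $(H,\gamma_H)$ by shifting each branch tree to be even, then applies \cref{lem:totallyevengrid} and transitivity of signed minors, and your shifting step and transitivity remark supply exactly the details the paper leaves implicit.
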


\subsection{Odd-minors and signed graphs}\label{sec:oddMinorsAndSignedGraphs}
A graph $G$ contains a graph $H$ as an \emph{odd-minor} if $H$ can be obtained from $G$ by deleting vertices, deleting edges, and contracting edge cuts. Equivalently, $H$ is an odd-minor of $G$ if there exists
\begin{enumerate}
    \item a mapping $\varphi_V$ from $V(H)$ to pairwise vertex disjoint trees in $G$,
    \item an injection $\varphi_E:E(H) \rightarrow E(G)$ such that $\varphi_E(uv)$ has one end in $\varphi_V(u)$ and one end in $\varphi_V(v)$ for each $uv \in E(H)$, and
    \item a 2-coloring $c:V(G) \rightarrow [2]$ such that
    \begin{enumerate}
        \item for all $v \in V(H)$, $c$ is a proper 2-coloring when restricted to the tree $\varphi_V(v)$, and
        \item for every $e \in E(H)$, $\varphi_E(e)$ is monochromatic.
    \end{enumerate}
\end{enumerate}

Odd-minors have been studied extensively both as it relates to coloring \cite{geelen2009odd, steiner2022asymptotic, kuhn2025disproof} and algorithms \cite{gerards1989min, tazari2012faster, gollin2023structure, jaffke2025dynamic, ChoiGKMW2025OCPtw}. Odd-minors and signed graph minors are essentially equivalent due to the following observations. Recall that $\gamma_1$ is the function which maps every edge to $1 \in \mathbb{Z}_2$.

\begin{observation}\label{obs:oddMinorSignedGraphTotallyOdd}
    $G$ contains $H$ as an odd-minor if and only if $(G,\gamma_1)$ contains $(H,\gamma_1)$ as a signed graph minor.
\end{observation}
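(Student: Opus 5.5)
We prove the two directions separately by translating between the two minor models. Both models use the same data: a map $\varphi_V$ sending vertices of $H$ to pairwise disjoint trees in $G$, and an injection $\varphi_E$ sending edges of $H$ to edges of $G$ joining the corresponding trees. So the only work is to exchange the $2$-coloring condition of an odd-minor model for the shifting condition of a signed graph minor model. The key remark is that a shifting of $(G,\gamma_1)$ is determined by the set $S \subseteq V(G)$ of vertices shifted an odd number of times. After shifting, the new label of an edge $uv$ is
\[
\gamma_1'(uv) = 1 + [u \in S] + [v \in S] \pmod 2 .
\]
Thus $\gamma_1'(uv) = 0$ exactly when precisely one of $u,v$ lies in $S$, and $\gamma_1'(uv)=1$ exactly when $u,v$ are both in $S$ or both outside $S$.

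\textbf{Odd-minor implies signed minor.} Suppose we have an odd-minor model $(\varphi_V,\varphi_E,c)$. Let $S = c^{-1}(1)$ and shift $(G,\gamma_1)$ at every vertex of $S$.
\begin{itemize}
\item Each tree edge has its two ends coloured differently, since $c$ is proper on $\varphi_V(v)$. Exactly one end is in $S$, so the edge gets label $0$.
\item Each image edge $\varphi_E(e)$ is monochromatic, so its ends are both in $S$ or both outside. It gets label $1 = \gamma_1(e)$.
\end{itemize}
Hence $(\varphi_V,\varphi_E,\gamma_1')$ is a signed graph minor model of $(H,\gamma_1)$ in $(G,\gamma_1)$.

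\textbf{Signed minor implies odd-minor.} Suppose we have a signed graph minor model with shifting $\gamma_1'$. Write $\gamma_1'$ as the shifting by the parity set $S$, and define $c(v)=1$ if $v\in S$ and $c(v)=2$ otherwise.
\begin{itemize}
\item Tree edges have label $0$, so exactly one end is in $S$. Hence $c$ is proper on each tree $\varphi_V(v)$.
\item The image edges satisfy $\gamma_1'(\varphi_E(e)) = \gamma_H(e) = 1$, so their ends are both in $S$ or both outside. Hence they are monochromatic.
\end{itemize}
This gives an odd-minor model of $H$ in $G$.

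The only point needing care is that an arbitrary sequence of shiftings reduces to shifting once at each vertex of a set $S$. This holds because shifting twice at the same vertex cancels and shiftings commute. With that settled, both directions are just the bookkeeping above.
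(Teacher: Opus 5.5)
Your proof is correct and follows the paper's argument. The reverse direction is the same: colour each vertex by whether it was shifted, with your parity set $S$ making ``was shifted'' precise. In the forward direction the paper simulates each edge-cut contraction by shifting at one side $V_1$ of the cut, whereas you apply the same trick once, globally, by shifting at the colour class $c^{-1}(1)$ of the odd-minor model; this spares you the step-by-step bookkeeping of the operational definition, but the idea is the same.
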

\begin{proof}
    Suppose $G$ contains $H$ as an odd-minor. Consider the sequence of deletions and contractions. Clearly whenever we delete a vertex or edge in $G$ we may do the same in $(G,\gamma_1)$. Whenever we contract an edge cut $[V_1,V_2] \subseteq E(G)$, we can shift in $(G,\gamma_1)$ at every vertex in $V_1$ such that the even edges are exactly the edges $[V_1,V_2]$. We may then contract the edges in the signed graph.

    Suppose $(G,\gamma_1)$ contains $(H,\gamma_1)$ as a signed graph minor. Consider the minor model $\varphi_V, \varphi_E$ of $(H,\gamma_1)$ in $(G,\gamma_1)$. Let $c:V(G) \rightarrow [2]$ denote whether a vertex was shifted at in $G$ in order to make the edges of $\varphi_V(v)$ have label 0 for every $v \in V(H)$. Then on every tree $\varphi_V(v)$, $c$ is a proper 2-coloring. Because $(G, \gamma_1)$ and $(H,\gamma_1)$ both have all edges odd, every edge in the image of $\varphi_E$ must be monochromatic under $c$. Thus we obtain a minor model for $H$ in $G$ as an odd-minor.
\end{proof}

\begin{observation}\label{obs:oddMinorSignedGraphEquivalenceMinDeg3}
    Let $H$ be a graph of minimum degree 3. Let $(G,\gamma)$ be a signed graph, and let $G'$ be the graph formed from $G$ by subdividing every even edge exactly once. Then $(G,\gamma)$ contains $(H,\gamma_1)$ as a signed graph minor if and only if $G'$ contains $H$ as an odd-minor.
\end{observation}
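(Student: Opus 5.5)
The natural route is to pass through \cref{obs:oddMinorSignedGraphTotallyOdd}. First note that $(G,\gamma)$ and $(G',\gamma_1)$ have the same cycle parities under the natural correspondence. Subdividing an even edge $e=uv$ into two edges, both labelled $1$, gives a path of parity $0=\gamma(e)$. An odd edge is unchanged and keeps label $1$. So the claim reduces to the following: $(G,\gamma)$ contains $(H,\gamma_1)$ as a signed graph minor if and only if $(G',\gamma_1)$ does. By \cref{obs:oddMinorSignedGraphTotallyOdd}, the latter is equivalent to $G'$ containing $H$ as an odd-minor.

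For the forward direction, take a minor model $(\varphi_V,\varphi_E,\gamma_G')$ of $(H,\gamma_1)$ in $(G,\gamma)$, where $\gamma_G'$ is obtained by shifting at a set $S\subseteq V(G)$. Shift $(G',\gamma_1)$ at the same set $S$. Then, for each edge $e$ of $G$, the image path in $G'$ has the same parity as $\gamma_G'(e)$. We turn this into a model in $G'$ as follows.
\begin{enumerate}
\item A tree edge $e$ has $\gamma_G'(e)=0$. If $e$ was originally odd, it is a single edge of $G'$ with new label $0$. If $e$ was even, it is a path $u w_e v$ of parity $0$, and we shift at $w_e$ if needed so that both halves get label $0$. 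In either case we put the whole image path, including the subdivision vertex, into the tree $\varphi'_V(\cdot)$.
\item An edge $e\in\varphi_E(E(H))$ has $\gamma_G'(e)=1$. If it is a single edge of $G'$, we keep it. If it is a subdivided path $u w_e v$ of parity $1$, exactly one half is labelled $0$ after shifting. We attach $w_e$ through that even half to the tree of the corresponding endpoint, and use the odd half as the image edge.
\end{enumerate}
Each subdivision vertex is used at most once, so the trees stay disjoint. This yields a model of $(H,\gamma_1)$ in $(G',\gamma_1)$. This direction does not need the degree hypothesis.

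For the converse, take a model $(\varphi'_V,\varphi'_E,S')$ of $(H,\gamma_1)$ in $(G',\gamma_1)$. The key step is to normalize it so that every subdivision vertex $w_e$ is used compatibly with $G$. This is where minimum degree $3$ matters. Each $w_e$ has degree $2$ in $G'$.
\begin{enumerate}
\item If $w_e$ is not used at all, delete it from consideration.
\item If $w_e$ is a leaf of a tree $\varphi'_V(x)$ and is not incident to any edge of $\varphi'_E$, prune it.
\item If $w_e$ lies inside a tree with both incident edges in the tree, then $e$ becomes a tree edge in $G$. The path has parity $0$, both halves have label $0$, so $\gamma(e)=0$, and this remains $0$ after shifting at $S'\cap V(G)$, since the shifts at $w_e$ cancel.
\item If $w_e$ is a leaf of $\varphi'_V(x)$ whose other edge lies in $\varphi'_E$, then $e$ becomes an $H$-edge of parity $0+1=1$, as required.
\item The remaining obstruction is a tree $\varphi'_V(x)$ consisting of $w_e$ alone. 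Then $x$ has degree at most $2$ in the model, since only two edges can leave $w_e$. But $H$ has minimum degree $3$, so every vertex of $H$ needs at least $3$ image edges. Hence this case cannot occur.
\end{enumerate}
After this normalization, contracting the subdivisions back gives trees in $G$ with even edges under the shift at $S'\cap V(G)$, and image edges of parity $1$. This is a model of $(H,\gamma_1)$ in $(G,\gamma)$.

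I expect the main obstacle to be this case analysis, specifically checking that the shifting at subdivision vertices is consistent with the parities. A tree containing $w_e$ must contain a neighbour of it, so its trees are never just $\{w_e\}$; this is exactly what the degree condition guarantees.
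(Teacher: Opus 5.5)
Your proof is correct and takes the same route as the paper: reduce to comparing $(G,\gamma)$ with $(G',\gamma_1)$ via \cref{obs:oddMinorSignedGraphTotallyOdd}; get the forward direction from the fact that $(G',\gamma_1)$ contains $(G,\gamma)$ as a minor (you compose the models explicitly, where the paper invokes transitivity); and for the converse use minimum degree $3$ to rule out a subdivision vertex forming a branch set on its own, so every subdivision vertex is effectively deleted or contracted. Your case analysis on the model is a more careful, model-based version of the paper's brief ``reorder the operations'' argument.
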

\begin{proof}
    Suppose $(G,\gamma)$ contains $(H,\gamma_1)$ as a minor. Clearly $(G',\gamma_1)$ contains $(G,\gamma)$ as a minor by shifting at every subdividing vertex and contracting one of the incident edges. Thus $(G',\gamma_1)$ contains $(H,\gamma_1)$ as a minor, and so $G'$ contains $H$ as an odd-minor by \cref{obs:oddMinorSignedGraphTotallyOdd}.

    Suppose $G'$ contains $H$ as an odd-minor. Then $(G',\gamma_1)$ contains $(H,\gamma_1)$ as a minor. Because $H$ has minimum degree 3, every subdividing vertex must be contracted or deleted when forming $H$. We may reorder the operations such that all subdividing vertices are removed first. After performing the operations on the subdividing vertices, the resulting graph is a minor of $(G,\gamma)$. Therefore $(H,\gamma_1)$ is a minor of $(G,\gamma)$.
\end{proof}

\subsection{Rooted (signed) graphs}\label{sec:rootedSignedGraphs}
A \emph{rooted graph} is a pair $(G, K)$ where $G$ is a graph and $K \subseteq V(G)$ denotes the set of \emph{roots}. When we contract an edge in a rooted graph, the resulting vertex is a root if and only if one of the ends of the contracted edge was a root. A rooted graph $(H, K_H)$ is a rooted minor of $(G, K_G)$ if it can be obtained by vertex deletion, edge deletion, contraction, and removing vertices from the set of roots.

A \emph{rooted signed graph} is a triple $(G, \gamma, K)$ where $(G, \gamma)$ is a signed graph and $K \subseteq V(G)$. A rooted signed graph $(H, \gamma_H, K_H)$ is a rooted minor of $(G, \gamma_G, K_G)$ if it can be obtained via vertex deletion, edge deletion, shifting, contracting even edges, and removing vertices from the set of roots.

We note that \cref{lem:totallyevengrid} and \cref{cor:forbidGridForbidEvenGridEquivalence} apply more generally for the rooted grid. Recall from \cref{def:rootedGrid} that $\mathcal{W}_k = (\mathbf{W}_k, \gamma_0, K_{\mathbf{W}_k})$.

\begin{lemma}\label{lem:totallyEvenRootedGrid}
    Let $k \in \mathbb{N}$. Let $(\mathbf{W}_{k^2},K_{\mathbf{W}_{k^2}})$ be the $k^2 \times k^2$ grid rooted at the first row of $\mathbf{W}_{k^2}$. Then $(\mathbf{W}_{k^2}, \gamma, K_{\mathbf{W}_{k^2}})$ contains $\mathcal{W}_k$ as a rooted grid minor for every $\gamma$.
\end{lemma}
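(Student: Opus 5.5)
The plan is to reuse the construction in the proof of \cref{lem:totallyevengrid} verbatim, and to check that it can be chosen so that the first row of $\mathbf{W}_k$ lands on the first row of $\mathbf{W}_{k^2}$. In that construction, vertex $(a,b)$ of $\mathbf{W}_k$ is mapped to $(f(a),f(b))$ with $f(x)=(k+1)(x-1)+1$. Since $f(1)=1$, the first row $\{(1,b): b\in[k]\}$ is sent to branch vertices in the first row of $\mathbf{W}_{k^2}$. These vertices lie in $K_{\mathbf{W}_{k^2}}$ and are therefore roots.

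I would first fix the small indexing issue: the grid has $k^2$ rows, but the construction uses coordinates up to $f(k)=(k+1)(k-1)+1=k^2$, so everything fits. The horizontal paths in each row $f(a)$ and the vertical paths in column $1$ are fixed. The remaining vertical edges, for $b\ge 2$, are routed through the subgrids $H_{a,b}$, choosing the parity so that each cell of $\mathbf{W}_k$ maps to an even cycle. The fallback case is when some $H_{a,b}$ has no odd cell. We cannot then simply take $H_{a,b}$ itself as the even grid, because it need not touch the root row. To avoid this case, I would slightly strengthen the routing claim: between two corners of $H_{a,b}$ there is a path of either prescribed parity, unless $H_{a,b}$ is entirely even. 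In the entirely even case, the parity of the path is fixed but determined. I then argue that all paths within the construction can still be adjusted.

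A cleaner way to handle this is as follows. Take the subdivision from \cref{lem:totallyevengrid} with the branch vertices $(f(a),f(b))$. Whenever some $H_{a,b}$ is even, replace that subgrid with a rooted even grid located near row $1$: the top $k$ rows of $\mathbf{W}_{k^2}$ form a $k\times k^2$ strip whose first row consists of roots. If one of the $k\times k$ blocks in this strip has all cells even, then it already is $\mathcal{W}_k$ with its first row rooted, and we are done. Otherwise every top block contains an odd cell, which is exactly the condition needed to route with free parity in the top band. I would therefore rearrange the construction so that the parity-adjusting subgrids used for the cells adjacent to the first row lie in that top band. For deeper rows, an even $H_{a,b}$ would be handled by connecting it upward to the roots using $k$ disjoint vertical column paths, whose parities can be fixed by shifting.

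Once every cell of the subdivided $\mathbf{W}_k$ is even, shifting makes all subdividing paths even, as noted after the definition of even grids. Contracting the subdividing paths then yields $(\mathbf{W}_k,\gamma_0)$. The top-row branch vertices are roots, and we unroot every other vertex, so the result is $\mathcal{W}_k$.

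The main obstacle is the degenerate case where a subgrid has no odd cell, because there the original proof simply outputs that subgrid, which loses the roots. I expect that the cleanest fix is to argue directly that such an even subgrid can be linked to the first row by $k$ disjoint paths inside the remaining grid while preserving even cells. If that fails, I would enlarge the host grid by a constant factor, which still suffices for the intended polynomial bounds.
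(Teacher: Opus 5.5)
In substance your proposal is the paper's proof, and the decisive step is your vertical-column idea. The paper runs the construction of \cref{lem:totallyevengrid} unchanged. When the subdivision exists, the first row of $\mathbf{W}_k$ lands on roots, as you note. In the degenerate case, the paper joins the all-even block $H_{a,b}$ to row $1$ of $\mathbf{W}_{k^2}$ by $k$ vertex-disjoint paths, namely the column segments directly above the top row of $H_{a,b}$. These segments meet $H_{a,b}$ only in its top row and contain no cycles. So shifting at their vertices, which touches no edge of $H_{a,b}$, makes them even. Contracting each segment into the top-row vertex below it then makes that vertex a root, and everything else is deleted.

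This works for every $a$, including $a=1$, where each segment is a single edge. You should commit to it and drop the alternatives, which do not hold up:
\begin{enumerate}
\item The assertion that paths ``can still be adjusted'' when some $H_{a,b}$ is entirely even is unjustified. The parity of any path through an all-even block is forced, and the other three sides of the cell it closes are already fixed in the construction.
\item The top-band rearrangement has no room. Row $f(1)=1$ already carries the horizontal paths of the first row of $\mathbf{W}_k$, and $f(k)=k^2$ leaves no slack to shift the construction down.
\item Enlarging the host grid would prove a weaker statement than the lemma.
\end{enumerate}
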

\begin{proof}
    The proof is exactly as in \cref{lem:totallyevengrid}, noting that for any grid minor found by the proof, there exists $k$ vertex disjoint paths from the first row of the grid minor to the first row of $\mathbf{W}_{k^2}$.
\end{proof}

\begin{corollary}\label{cor:forbidRootedGridForbidEvenRootedGridEquivalence}
    For any $\gamma$, the following holds. If $(G,K)$ forbids $(\mathbf{W}_k, K_{\mathbf{W}_k})$ as a rooted minor, then $(G,\gamma,K)$ forbids $\mathcal{W}_k$ as a rooted signed graph minor. Conversely, if $(G,\gamma,K)$ forbids $\mathcal{W}_k$ as a rooted signed graph minor, then $(G,K)$ forbids $(\mathbf{W}_{k^2}, K_{\mathbf{W}_{k^2}})$ as a rooted minor.
\end{corollary}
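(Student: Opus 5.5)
The statement to prove is \cref{cor:forbidRootedGridForbidEvenRootedGridEquivalence}. I will treat its two directions separately. Both follow the template of \cref{cor:forbidGridForbidEvenGridEquivalence}, with \cref{lem:totallyEvenRootedGrid} in place of \cref{lem:totallyevengrid}.

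For the first direction I argue by contrapositive. Suppose $(G,\gamma,K)$ contains $\mathcal{W}_k$ as a rooted signed graph minor. Take the sequence of operations producing it: vertex deletions, edge deletions, shiftings, contractions of even edges, and removals of roots. Now forget the signature. Shifting does not change the underlying rooted graph, and every other operation is a legal rooted minor operation. In particular, contracting an edge makes the new vertex a root exactly when one end was a root, in both settings. So the same sequence, with the shiftings dropped, turns $(G,K)$ into $(\mathbf{W}_k,K_{\mathbf{W}_k})$, and $(G,K)$ contains this rooted grid as a rooted minor.

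For the converse I again argue by contrapositive. Suppose $(G,K)$ contains $(\mathbf{W}_{k^2},K_{\mathbf{W}_{k^2}})$ as a rooted minor, with a model consisting of disjoint trees $\varphi_V(v)$, one for each grid vertex $v$, and edges $\varphi_E(e)$, one for each grid edge $e$. I will lift this to a signed statement. In each tree $\varphi_V(v)$, shift at vertices so that every edge of the tree becomes even. This is possible because a tree has no cycles: root it and shift greedily away from the root. The trees are vertex-disjoint, so these shiftings do not interfere with each other.

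After this shifting, each tree can be contracted as even edges, which gives a rooted signed graph $(\mathbf{W}_{k^2},\gamma',K')$. Here $\gamma'$ is the signature induced on the images of the model edges. The root set $K'$ contains every first-row vertex, because each such tree contains a root of $K$. Removing the extra roots, we may take $K'=K_{\mathbf{W}_{k^2}}$. So $(G,\gamma,K)$ contains $(\mathbf{W}_{k^2},\gamma',K_{\mathbf{W}_{k^2}})$ as a rooted signed graph minor. By \cref{lem:totallyEvenRootedGrid}, this contains $\mathcal{W}_k$, and transitivity of the rooted signed minor relation finishes the argument.

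The main obstacle is this lifting step: converting an unsigned rooted minor model into a signed one with some signature $\gamma'$. The points to check are that the shiftings within different trees are compatible and that root status is preserved under the contractions. The rest is bookkeeping, together with transitivity of the rooted signed minor relation, which follows by composing operation sequences.
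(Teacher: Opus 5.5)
Your proposal is correct and follows the paper's intended route. The first direction is immediate once you forget the shiftings. For the converse, you lift the unsigned rooted model of $(\mathbf{W}_{k^2},K_{\mathbf{W}_{k^2}})$ to a signed model of $(\mathbf{W}_{k^2},\gamma',K_{\mathbf{W}_{k^2}})$ for some $\gamma'$, and then apply \cref{lem:totallyEvenRootedGrid}. The paper states the corollary without a written proof, and your explicit lifting step is exactly the detail it takes for granted: shifting inside each vertex-disjoint branch tree to make its edges even, deleting the non-model edges, and then contracting.
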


\subsection{Matrices}\label{sec:matrices}

Let $A$ be an integer matrix with exactly two nonzero entries per row. By $G(A)$ we denote the graph whose edge-vertex incidence matrix is the support of $A$. That is, $G(A)$ has vertex set equal to the columns of $A$ and edge set equal to the rows of $A$, and vertex $v$ is incident to edge $e$ if and only if $A_{e,v}$ is nonzero. By $G^+(A)$ we denote the signed graph $(G(A), \gamma)$ where $\gamma(e) = 0$ if the entries of row $e$ have different signs, and $\gamma(e) = 1$ if the entries of row $e$ have the same sign. By $G^+_\bullet(A)$ we denote the rooted signed graph $(G(A), \gamma, K)$ where $(G(A), \gamma) = G^+(A)$ and $K$ is the set of columns that contain an entry outside of $\{-1, 0, 1\}$.

A matrix $A \in \mathbb{Z}^{m \times n}$ is \emph{totally $\Delta$-modular} if every subdeterminant of $A$ is contained in $\{-\Delta, -\Delta + 1, \ldots, \Delta\}$. If $A$ has two nonzero entries per row, this is equivalent to bounding $\|A\|_\infty$, the number of columns with entries outside $\{-1, 0, 1\}$, and the odd cycle packing of the corresponding graph, as given by the lemma below.

\begin{lemma}[Lemma 10.2~\cite{ChoiGKMW2025OCPtw}]\label{lem:deltamodularequivalence}
    Let $A \in \mathbb{Z}^{m \times n}$ have two nonzero entries per row and let $(G, \gamma, K) = G^+_\bullet(A)$. Then if $A$ is totally $\Delta$-modular,
    \begin{enumerate}
        \item $\|A\|_\infty \leq \Delta$,
        \item $|K| \leq 2\log_2\Delta$, and
        \item $\ocp(G,\gamma) \leq \log_2\Delta$.
    \end{enumerate}
    Conversely, if $\Delta$ is the largest absolute value of a subdeterminant of $A$, then $\Delta \leq 2^{\ocp(G,\gamma)}\|A\|_{\infty}^{|K|}$.
\end{lemma}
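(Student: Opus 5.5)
The plan is to handle the two directions separately. For the forward direction I would exhibit explicit square submatrices whose determinants are forced to be large. For the converse I would run a Laplace-expansion argument that exploits the fact that every square submatrix of $A$ still has at most two nonzero entries per row.

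\textbf{Forward direction.} Item 1 is immediate, since every entry is a $1\times 1$ subdeterminant.

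For item 3, take vertex-disjoint odd cycles $C_1,\dots,C_t$ in $(G,\gamma)$. Consider the submatrix with rows $\bigcup E(C_i)$ and columns $\bigcup V(C_i)$. Because the cycles are disjoint, it is block diagonal, with one square block per cycle. Expanding the determinant of a cycle block gives two terms, $\pm\prod a_i$ and $\pm\prod b_i$, where $a_i,b_i$ are the two entries in the rows. I would check by a sign bookkeeping argument (negating rows and columns changes the determinant only by a sign) that the oddness of the cycle under $\gamma$ is exactly the condition that the two terms carry the same sign. Then $|\det| = \prod|a_i| + \prod|b_i| \ge 2$ for each block, the whole determinant is at least $2^t$ in absolute value, and hence $t\le \log_2\Delta$.

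For item 2, for each root $v\in K$ I choose a row $e_v$ with $|A_{e_v,v}|\ge 2$. I then look for a large $S\subseteq K$ such that the submatrix with columns $S$ and rows $\{e_v : v\in S\}$ is triangular after reordering, since its determinant is then at least $2^{|S|}$ in absolute value. To find $S$, define the digraph $D$ on $K$ with an arc $v\to u$ whenever the other end of $e_v$ is a root $u$. Every vertex of $D$ has out-degree at most one. The submatrix is triangular exactly when the rows $e_v$ are distinct and $D[S]$ is acyclic. If $e_u=e_v$ for two roots, I keep only one of them. Otherwise I delete one vertex from each directed cycle of $D$; since all cycles have length at least $2$, this keeps at least half the roots. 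Hence $|S|\ge |K|/2$, and $2^{|K|/2}\le\Delta$ gives $|K|\le 2\log_2\Delta$. The step I expect to need the most care is this combinatorial selection of $S$, in particular the case of $2$-cycles in $D$ coming from distinct rows.

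\textbf{Converse.} Let $B$ be a square submatrix. If some column of $B$ has a single nonzero entry, I expand along that column. This contributes a factor of absolute value at most $\|A\|_\infty$ if the column is a root, and at most $1$ otherwise, and every column is removed at most once. A row with a single nonzero entry is handled the same way, so I may assume every row has exactly two nonzero entries and every column has at least two. Counting incidences, the graph on the columns of $B$ whose edges are the rows of $B$ then has equally many edges and vertices and minimum degree at least $2$, so it is a disjoint union of cycles. The determinant is therefore a product over these cycles. For each cycle it equals $\pm\prod a_i \pm \prod b_i$, and each product is at most $\|A\|_\infty^{r}$ in absolute value, where $r$ is the number of roots on the cycle. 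For an even cycle the signs are opposite, so the determinant is a difference of two nonnegative numbers and is at most $\|A\|_\infty^{r}$. For an odd cycle it is at most $2\|A\|_\infty^{r}$. The cycles are vertex-disjoint in $G$, so there are at most $\ocp(G,\gamma)$ odd ones, and every root column contributes at most one factor of $\|A\|_\infty$ overall. This gives $|\det B|\le 2^{\ocp(G,\gamma)}\|A\|_\infty^{|K|}$.
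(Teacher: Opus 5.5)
Your proof is correct and complete. The paper gives no proof of this statement: it imports it verbatim as Lemma~10.2 of Choi et al.\ and uses it only to motivate encoding matrices as rooted signed graphs. So there is no in-paper argument to compare against, and your proof stands as a self-contained justification. It uses only Laplace expansion and the structure of $2$-regular graphs.

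The three forward bounds hold. In item~3, odd parity under $\gamma$ is exactly what makes the two permutation terms of a cycle block agree in sign, giving $|\det|=\prod|a_i|+\prod|b_i|\ge 2$ per odd cycle.

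In the converse, each single-entry row or column you peel off is charged to a distinct column, and only roots can contribute a factor larger than $1$. The $2$-regular remainder is a vertex-disjoint union of cycles of $G$ with the same parities as in $(G,\gamma)$. Since $\|A\|_\infty\ge 1$, raising the exponent from the roots actually used to $|K|$ is harmless, which gives $2^{\ocp(G,\gamma)}\|A\|_\infty^{|K|}$.

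The step you flagged as delicate is clean, and your two cases can be merged. If $e_u=e_v$ for distinct roots $u,v$, then $u\to v\to u$ is a directed $2$-cycle of $D$. The same holds when $e_u\neq e_v$ are parallel rows. So one rule suffices: delete one vertex from each directed cycle of $D$. Because $D$ has out-degree at most one, its directed cycles are pairwise vertex-disjoint, and they have length at least $2$ since $G$ has no loops. The surviving set $S$ therefore has $|S|\ge |K|/2$. Moreover, $D[S]$ is acyclic, which also forces the rows $e_v$ for $v\in S$ to be pairwise distinct. A topological order of $D[S]$ then makes the submatrix triangular with diagonal entries $A_{e_v,v}$ of absolute value at least $2$.
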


\section{\texorpdfstring{$K$}{K}-free treewidth}\label{sec:Kfreetw}

Jansen and Swennenhuis~\cite{JansenS2024steiner} introduced $K$-free treewidth as a notion of treewidth specialized for rooted graphs. This notion will be central in decomposing our rooted signed graph with respect to the roots. We note this differs from other notions of treewidth with respect to rooted graphs, such as annotated treewidth or bidimensionality \cite{sau2025parameterizing, protopapas2025colorful}.

\begin{definition}[Tree $K$-free-decomposition]
    Let $G$ be a graph and $K \subseteq V(G)$. A \emph{tree $K$-free-decomposition} of $G$ is a triple $(T,\beta,L)$ where $T$ is a tree, $L \subseteq V(G) \setminus K$, and $\beta:V(T) \rightarrow 2^{V(G)}$ such that
    \begin{enumerate}
        \item $(T,\beta)$ is a tree decomposition of $G$, and
        \item for each $v \in L$, there exists a unique $t \in V(T)$ such that $v \in \beta(t)$, and $t$ is a leaf of $T$.
    \end{enumerate}
    The \emph{width} of a tree $K$-free-decomposition $(T, \beta, L)$ is $\max\{0, \max_{t \in V(T)} |\beta(t) \setminus L| - 1\}$. The \emph{$K$-free treewidth} of a graph $G$, denoted $\mathsf{tw}_K(G,K)$ is the minimum width of a tree $K$-free-decomposition of $G$.
\end{definition}
Intuitively, a graph has low $K$-free treewidth if there exists a tree decomposition where internal bags must be small, but leaf bags can contain many vertices outside of $K$.

Jansen and Swennenhuis~\cite{JansenS2024steiner} also gave an FPT 5-approximation for $K$-free treewidth.

\begin{theorem}[Theorem 3.4~\cite{JansenS2024steiner}]\label{thm:FPTapproxForKfreeTW}
    There exists an algorithm that, given an $n$-vertex graph $G$, $K \subseteq V(G)$, and a non-negative integer $k$, either computes a tree $K$-free-decomposition $(T,\beta)$ of $G$ of width at most $5k+5$ with $|V(T)| \in \mathcal{O}(n)$ nodes, or correctly returns that $\mathsf{tw}_K(G,K) > k$. The algorithm runs in time $2^{\mathcal{O}(k)}\mathsf{poly}(n)$.
\end{theorem}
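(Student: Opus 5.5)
The plan is to adapt the classical Robertson--Seymour ``grow a boundary, find a balanced separator, recurse'' approximation for treewidth to the rooted setting. The rooted twist is that any piece of the graph containing no root may be dumped into a leaf bag, with all of its vertices declared to be in $L$.

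\textbf{Recursion.} Run a recursive procedure $\mathrm{Dec}(C,W)$ whose input is a vertex set $C$, inducing a connected subgraph of $G$, and a set $W$ with $N(C)\subseteq W$ and $|W|\le 3k+4$. Its output is a tree $K$-free-decomposition of $G[C\cup W]$ in which $W$ lies in the root bag and $W\cap L=\varnothing$.
\begin{enumerate}
\item Base case, $C\cap K=\varnothing$: output a single leaf bag $W\cup C$ with $L=C$. Every vertex of $C$ lies only in this leaf bag, so the condition on $L$ holds. The bag minus $L$ is $W$, which has size at most $3k+4$.
\item Root case, $C\cap K\neq\varnothing$: if $|W|<3k+4$, first move a vertex of $K\cap C$ into $W$, so that progress is made.
\item Splitting case, otherwise: find a set $S\subseteq C\cup W$ with $|S|\le k+1$ such that every component $D$ of $G[C\cup W]-S$ with $D\cap K\ne\varnothing$ contains at most $|W|/2$ vertices of $W$.
\end{enumerate}
In the splitting case, create a bag $W\cup S$. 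For each component $D$ of $G[C]-S$, recurse on $\mathrm{Dec}(D, N(D))$ where $N(D)\subseteq (W\cap D')\cup S$ and $D'$ is the component containing $D$. Components with a root get a boundary of size at most $(3k+4)/2+k+1\le 3k+4$. Components without a root go straight to the base case with boundary at most $|W|+|S|$, which is harmless because only non-$L$ vertices count towards the width. Every bag minus $L$ then has size at most $|W|+|S|\le 3k+4+k+1+1$, which gives width at most $5k+5$. Roots are always absorbed into $W$ or $S$ before the recursion continues, so each call makes progress. A standard charging argument (each call adds a new vertex to some bag, or terminates in a leaf) gives $\mathcal{O}(n)$ nodes.

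\textbf{Existence of the separator.} This is the step I expect to be the main obstacle. Suppose $\mathsf{tw}_K(G,K)\le k$, witnessed by $(T^*,\beta^*,L^*)$. The standard argument picks a node $t$ of $T^*$ such that every component of $G-(\beta^*(t)\setminus L^*)$ contains at most half of $W$, and takes $S=\beta^*(t)\setminus L^*$, of size at most $k+1$. The complication is that $W$ may contain vertices of $L^*$, so the usual weighting argument has to be adjusted. The fix is to notice the following. If $v\in L^*$ lies in the leaf bag $\beta^*(t')$, then the component of $G-(\beta^*(t')\setminus L^*)$ containing $v$ lies inside $L^*\cap\beta^*(t')$, and hence contains no root. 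So I only require balance for components that meet $K$. I then run the usual centroid walk in $T^*$, putting weight only on the $W$-vertices that lie in root-containing parts. Any $W$-vertices that land in root-free components do no harm, since the algorithm sends those components to leaves. The invariant $W\cap L=\varnothing$ concerns only the constructed decomposition, in which $W$ is never placed in $L$, so it causes no conflict.

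\textbf{Computing the separator and running time.} To find $S$ algorithmically, guess a partition of $W$ into $W_1, W_2, W_S$ with $|W_1|,|W_2|\le |W|/2$ after extending the balance appropriately; there are $2^{\mathcal{O}(k)}$ such guesses. For each guess, compute a minimum vertex cut of size at most $k+1-|W_S|$ between $W_1$ and $W_2$ in $G[C\cup W]-W_S$ using $k+1$ rounds of augmenting paths. If no guess succeeds, the existence argument above certifies that $\mathsf{tw}_K(G,K)>k$. Each call costs $2^{\mathcal{O}(k)}\mathsf{poly}(n)$ and there are $\mathcal{O}(n)$ calls, giving the claimed running time.
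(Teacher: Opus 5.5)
The paper gives no proof of this theorem; it is quoted from Jansen and Swennenhuis, so your sketch has to stand on its own. The recursion is fine, and so is your existence statement. The cleanest version: apply the usual balanced-bag lemma to $(T^*,\beta^*)$ with the vertices of $W$ as weights and take $S=\beta^*(t)\setminus L^*$. The only new components of $G-S$ then lie inside $L^*\cap\beta^*(t)$ and are root-free.

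\textbf{The gap is in your last paragraph.} The search procedure does not match this existence statement, so the claim that ``if no guess succeeds, $\mathsf{tw}_K(G,K)>k$'' is unjustified.
\begin{itemize}
\item The existence statement says nothing about the set $W_0$ of vertices of $W\setminus S$ lying in root-free components of $G[C\cup W]-S$.
\item Nothing in your algorithm keeps $W_0$ small. Earlier separators are plain $(W_1,W_2)$-cuts chosen without looking at $K$, so $W$ may contain many vertices that the witness decomposition puts into $L^*$.
\item A guess $W\setminus W_S=W_1\sqcup W_2$ for which $S\setminus W_S$ is a valid cut must put all $W$-vertices of each root-free component on one side. Once one such component carries more than $2|W|/3$ of $W$, no balanced guess is witnessed by $S$.
\item Allowing unbalanced guesses does not help, because a $(W_1,W_2)$-cut is blind to $K$. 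The heavy side may land in a component containing a root, whose boundary then exceeds $3k+4$.
\end{itemize}
A smaller point: even without roots you cannot demand $|W_1|,|W_2|\le |W|/2$, since three components carrying $|W|/3$ each already prevent it. The $2/3$ version suffices, because $\lfloor 2(3k+4)/3\rfloor+k+1=3k+3$.

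\textbf{What is missing is a cut problem that sees the roots.} Guess $W=W_S\sqcup W_0\sqcup W_1\sqcup W_2$ with $|W_1|,|W_2|\le 2|W|/3$. Look for $S'$ with $|S'|\le k+1-|W_S|$ in $G[C\cup W]-W_S$ that separates $W_0$ from $K\cup W_1\cup W_2$ and separates $W_1$ from $W_2$.
\begin{itemize}
\item For the right guess, $S\setminus W_S$ is such a set. Root-free components are distinct from root-meeting ones, and each root-meeting component carries at most $|W|/2$, so they can be grouped into two classes of at most $2|W|/3$ each.
\item Conversely, any such $S'$ is exactly what your recursion needs. Every component meeting $K$ has its $W$-vertices inside $W_1$ or inside $W_2$, and root-free components go to leaves.
\end{itemize}
This is not a single $s$--$t$ cut: roots may lie on both the $W_1$-side and the $W_2$-side, so $K$ cannot be merged into one terminal. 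However, the standard pushing argument shows that the part of $S'$ bounding the region reachable from $W_0$ may be taken to be an important $(W_0,K\cup W_1\cup W_2)$-separator. Enumerate the at most $4^{k+1}$ of these, and for each compute a minimum $(W_1,W_2)$-cut in the remaining graph. This takes $2^{\mathcal{O}(k)}\mathsf{poly}(n)$ time per guess, over $2^{\mathcal{O}(k)}$ guesses.

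With this step in place, the rest of your argument goes through: leaf bags for root-free components, non-$L$ parts of size at most $4k+5$, and $\mathcal{O}(n)$ nodes.
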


Marx, Seymour, and Wollan~\cite{MarxSW2017Rooted} proved a grid theorem for rooted minors in the language of tangles, but we present the statement in terms of $K$-free treewidth as given in \cite{HodorLMR2024quickly}.
\begin{theorem}[Theorem 9~\cite{HodorLMR2024quickly}]\label{thm:forbiddingRootedGrid}
    There exists a function $f_{\ref{thm:forbiddingRootedGrid}}: \mathbb{N} \rightarrow \mathbb{N}$ such that for every positive integer $k$, every graph $G$, and every set $K \subseteq V(G)$, if $(G,K)$ does not contain $(\mathbf{W}_k, K_{\mathbf{W}_k})$ as a rooted-minor, then the $K$-free treewidth of $G$ is at most $f_{\ref{thm:forbiddingRootedGrid}}(k)$. Furthermore $f_{\ref{thm:forbiddingRootedGrid}} \in \mathcal{O}(k^{36 + o(1)})$.
\end{theorem}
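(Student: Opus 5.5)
This statement is quoted from \cite{HodorLMR2024quickly} and is not proved in the paper, so what follows is the route I would take to reprove it. I would follow the classical ``well-linked set versus decomposition'' duality, with the role of the whole vertex set played by the roots $K$. Fix a threshold $w = w(k)$, polynomial in $k$. Call a set $X \subseteq K$ \emph{well-linked} if for every separation $(A,B)$ of $G$ with $|A \cap B| < \min(|A\cap X|,|B \cap X|)$ we get a contradiction; equivalently, any two disjoint equal-size subsets of $X$ are joined by that many vertex-disjoint paths.

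The first step is the contrapositive of the decomposition side. If $K$ contains no well-linked set of size $w$, I would build a tree $K$-free-decomposition of width $\mathcal{O}(w)$ by the standard recursive procedure (as in the Robertson--Seymour proof that small well-linked sets give small treewidth). Maintain a part $H$ of $G$ with an interface $S$ of size at most $3w$. If $H$ contains no root outside $S$, then $H$ becomes a leaf bag: its non-interface vertices all lie outside $K$ and go into $L$, so they are not counted in the width. Otherwise, consider $S$ together with a bounded number of roots of $H$. Since this set is not well-linked, there is a balanced separator of size less than $w$ that splits the roots and the interface, and we recurse on both sides. The point is that only roots need to be split, so the recursion ends exactly at root-free pieces, which are legal leaves. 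This gives $\mathsf{tw}_K(G,K) \le \mathcal{O}(w)$.

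The second step uses the well-linked set. Suppose some $X \subseteq K$ with $|X| = w$ is well-linked. By the polynomial grid theorem of Chekuri and Chuzhoy, in its form with a well-linked terminal set (path-of-sets systems), $G$ contains a grid minor of order $k' = \mathrm{poly}(k)$, and $X$ is well-linked into it: there are $k'$ vertex-disjoint paths from $X$ to, say, the first row of branch sets. Deleting rows and columns and rerouting these paths through the grid is the Marx--Seymour--Wollan ``rooted grid'' argument. This yields a $k \times k$ grid whose first-row branch sets each contain a root, that is, $(\mathbf{W}_k, K_{\mathbf{W}_k})$ as a rooted minor.

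I expect the main obstacle to be this attachment step, specifically keeping the bound polynomial. A naive routing of the $k'$ terminal paths through the grid costs super-polynomially. I would try to build the path-of-sets system so that $X$ is one of its boundary sets from the start. That makes the terminal paths part of the structure, and the exponent $36+o(1)$ then comes from composing the Chekuri--Chuzhoy bound with a quadratic loss in the routing and in the well-linkedness step.
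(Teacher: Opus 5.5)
\textbf{Step 1 has a genuine gap.} The paper itself does not prove this statement. It quotes it from \cite{HodorLMR2024quickly}, which recasts the tangle-based rooted grid theorem of Marx, Seymour and Wollan \cite{MarxSW2017Rooted} in terms of $K$-free treewidth with a polynomial bound. So your well-linked-set route has to stand on its own.

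In your recursion, the interface $S$ of a piece $H$ consists of separator vertices chosen at earlier stages, and these are in general not roots. Your hypothesis only forbids well-linked subsets of $K$, so it says nothing about $S\cup R$. Hence ``since this set is not well-linked, there is a balanced separator'' does not follow, and it can actually fail. Nothing in your rule stops earlier separators of order $<w$ from running through a large root-free grid region, with the roots sitting in a few clusters hanging off cut vertices. The accumulated interface can then be well-linked inside $H$, so no separator of order $<w$ balances $S\cup R$, even though $K$ has no well-linked subset of size $w$. Splitting only the roots does not help. A separation balanced with respect to $R$ may leave all of $S$ on one side; the new interface $(S\cap A)\cup(A\cap B)$ can then exceed $3w$, and the width bound is lost.

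\textbf{The missing idea} is an invariant tying every interface to the roots. For instance: $S$ can be joined to $|S|$ distinct roots by vertex-disjoint paths that avoid $V(H)\setminus S$. Concatenating with these paths turns a well-linked subset of $S$ in $H$ into an equally large well-linked set of roots in $G$, so the hypothesis does apply to $S$. Moreover, any separation witnessing that $S$ is not well-linked already keeps both new interfaces smaller than $|S|$, so you do not need balance. Keeping the invariant requires choosing each separation extremally, of minimum order, so that Menger's theorem supplies linkages for the new separator vertices through the opposite side. Alternatively, argue with tangles as Marx, Seymour and Wollan do. There the relevant obstruction is a tangle of large order in which no separation of small order has all of $K$ on its small side.

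\textbf{Step 2 is still only a plan at its decisive point}, as you acknowledge. A polynomial grid theorem yields a grid somewhere in $G$. You need one controlled by the tangle of $X$ before Menger and rerouting can make it rooted.

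\textbf{The exponent bookkeeping does not work.} The Chekuri--Chuzhoy bound composed with a quadratic loss does not give $36$. The stated exponent is consistent with the $k^{9}\,\mathrm{polylog}\,k$ bound of Chuzhoy and Tan composed with a degree-four loss.
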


We also show that the approximate converse is true. First, we note the following observation.

\begin{observation}\label{obs:KfreeTWminormonotone}
    If $(H,K_H)$ is a rooted minor of $(G,K_G)$, then $\mathsf{tw}_K(H,K_H) \leq \mathsf{tw}_K(G,K_G)$.
\end{observation}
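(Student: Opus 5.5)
The plan is to show that each of the three elementary rooted-minor operations does not increase $K$-free treewidth. Since a rooted minor is obtained by a finite sequence of these operations, the observation then follows by induction. The operations are: vertex or edge deletion, contraction of an edge, and removing a vertex from the root set. Throughout, start from a tree $K$-free-decomposition $(T,\beta,L)$ of $(G,K_G)$ of width $w$, and modify it to a decomposition of the smaller rooted graph of width at most $w$.

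\emph{Deletions and unrooting.} For edge deletion, keep $(T,\beta,L)$ unchanged. For deleting a vertex $v$, replace $\beta(t)$ by $\beta(t)\setminus\{v\}$ and $L$ by $L\setminus\{v\}$. All tree-decomposition axioms persist, every vertex of $L$ still lies in a unique leaf bag, and $|\beta(t)\setminus L|$ can only drop. For removing $v$ from the root set, keep the decomposition unchanged. The condition $L\subseteq V\setminus K$ only becomes easier to satisfy, and the width is untouched.

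\emph{Contraction of $e=uv$ into a new vertex $x$.} Set $\beta'(t)=(\beta(t)\setminus\{u,v\})\cup\{x\}$ whenever $\beta(t)$ meets $\{u,v\}$. This is a tree decomposition of $G/e$: the nodes containing $x$ form the union of two subtrees that share the node covering $e$, so they induce a connected subgraph.

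The main point to check is how $L$ behaves, and there are three cases.
\begin{itemize}
\item If neither $u$ nor $v$ is in $L$, set $L'=L$. Each bag loses at least as many non-$L$ vertices as it gains, so the width does not increase.
\item If both are in $L$, they lie in the same unique leaf bag $t$, because some bag must contain both ends of $e$. Then $x$ appears only in $t$. Since $u,v\notin K_G$, the vertex $x$ is not a root, and we may put $x$ into $L'$. All sizes $|\beta(t)\setminus L|$ are unchanged.
\item If exactly one of them is in $L$, say $u\in L$ with leaf bag $t$, then $v\in\beta(t)$. Set $L'=L\setminus\{u\}$. Now $x$ occupies exactly the bags that contained $v$, and each bag's count of non-$L$ vertices is unchanged: $u$ disappears from the $L$-part and $v$ is replaced by $x$.
\end{itemize}
In all cases the width is at most $w$.

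The only mildly delicate step is the contraction case with exactly one endpoint in $L$. There one must observe that the unique bag containing $u$ also contains $v$, so that $x$ need not enter any new bag. The bound is then immediate by induction over the sequence of operations.
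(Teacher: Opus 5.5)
Your proposal is correct and follows the paper's own argument. Deletions and unrooting are trivial. For contraction you identify the two endpoints in every bag and put the new vertex in $L$ exactly when both endpoints were in $L$. Your three-case check, in particular the observation in the mixed case that the unique leaf bag of the $L$-endpoint already contains the other endpoint, fills in details the paper leaves implicit.
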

\begin{proof}
    Clearly removing vertices from $K_G$ and deleting vertices or edges from $G$ can only decrease the $K$-free treewidth. If $(T,\beta,L)$ is a tree $K_G$-free-decomposition of $(G,K_G)$ and we contract an edge $uv \in E(G)$ to obtain the rooted graph $(H,K_H)$, then by identifying the two vertices in each bag of the tree we obtain a tree $K_H$-free-decomposition of $(H,K_H)$ of only smaller width. The new vertex is in $L$ if and only if both ends of the contracted edge were in $L$.
\end{proof}

We now show the approximate converse of \cref{thm:forbiddingRootedGrid}.

\begin{lemma}\label{lem:rootedGridLargeKFreeTW}
    If $(G,K)$ contains $(\mathbf{W}_{3k+1}, K_{\mathbf{W}_{3k+1}})$ as a rooted minor, then $\mathsf{tw}_K(G,K) \geq k$.
\end{lemma}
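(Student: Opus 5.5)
By \cref{obs:KfreeTWminormonotone}, rooted minors cannot increase $K$-free treewidth. So it suffices to show $\mathsf{tw}_K(\mathbf{W}_{3k+1}, K_{\mathbf{W}_{3k+1}}) \geq k$. Set $n = 3k+1$, $W = \mathbf{W}_n$ and $R = K_{\mathbf{W}_n}$, the first row. Suppose for contradiction that $(T,\beta,L)$ is a tree $R$-free-decomposition of width at most $k-1$, so $|\beta(t)\setminus L| \leq k$ for every $t$.

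First we clean up $L$. A vertex $v\in L$ lies in exactly one bag, which is a leaf bag $\beta(t)$. Every neighbour of $v$ must share a bag with $v$, so all of $N(v)$ lies in $\beta(t)$. Hence $L$ is an independent set: two adjacent vertices of $L$ would both sit only in the same leaf $t$, which is allowed, but then we will account for this case in the separation argument below. Also, deleting $\beta(t)\cap L$ from a leaf bag leaves $\beta(t)\setminus L$ as a separator between the vertices of $L\cap\beta(t)$ and the rest of the graph. Removing all of $L$ and all leaves containing $L$-vertices yields a tree decomposition of $W - L$ of width at most $k-1$. The key fact is that every connected component of $W - (\beta(t)\setminus L)$ containing an $L$-vertex of $\beta(t)$ lies entirely inside $\beta(t)$.

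The main step is a balanced-separator argument. Choose a node $t$ of $T$ in the standard way, orienting each edge of $T$ toward the side containing more than half of the $n$ \emph{columns}, where a column is counted on a side if it meets that side's vertices outside the adhesion. Let $S=\beta(t)\setminus L$, so $|S|\leq k$. Then:

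\begin{itemize}
\item Either no component of $W-S$ meets more than roughly $n/2$ columns. Since $|S|\leq k$, at least $2k+1$ columns and at least $2k+1$ rows avoid $S$. Each untouched row meets every untouched column, so all untouched rows and columns lie in one component, and that component meets most columns. This is a contradiction.
\item Or the big component $C$ lies inside $\beta(t)$ and consists of $L$-vertices plus parts of $S$.
\end{itemize}

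This second case is where the roots are used, and it is the step I expect to be the main obstacle, since the bookkeeping must be done carefully. The $2k+1$ untouched columns each contain a root in row $1$, and roots are never in $L$. Moreover, row $1$ minus $S$ still has a long segment in this component once we use untouched columns. So $C$ contains a root $r\notin L$. Because $r\in\beta(t)\setminus L=S$, this contradicts $C\cap S=\emptyset$.

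More carefully, every vertex of the big component lying in $\beta(t)$ must lie in $S\cup L$. The component contains a whole untouched column, hence its root, hence a non-$L$ vertex, which would have to be in $S$. That is a contradiction unless the component leaves $\beta(t)$. If it leaves $\beta(t)$, it lies in the subtree across some edge $tt'$, but we picked $t$ so that no single branch carries most columns, and the adhesion $\beta(t)\cap\beta(t')\subseteq S$ separates it.

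The constant $3$ in $3k+1$ gives the slack needed: $k$ vertices of $S$ kill at most $k$ rows and $k$ columns, leaving $2k+1 > n/2 + $ (something) untouched, so the majority argument goes through. To execute this cleanly I would phrase the separation via the bramble of crosses (one row plus one column), restricted to the untouched ones, and show that every bag of the decomposition must meet all crosses among the $2k+1$ untouched rows and columns. This is impossible with $|S|\leq k$, because $L$-vertices cannot cover a cross: a cross contains a root, and its vertices span more than one leaf bag.
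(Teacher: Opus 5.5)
Your argument is correct in substance, but it takes a different route from the paper. The paper never looks at rows and columns. It uses only that the root set $K$ (the first row) is well-linked. After making $T$ subcubic, it orients each edge of $T$ away from the side of its separation containing at most $k$ roots. At a sink $t$ it then covers $V(G)$ by at most three branches to contradict $|K|=3k+1$.

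You instead run the classical cross argument for grids. The rows and columns missed by an adhesion of size at most $k$ form a connected set lying on one side, and this orients $T$. At a sink $t$, let $S=\beta(t)\setminus L$ and let $C$ be the component of $W-S$ carrying the untouched rows and columns. Since every adhesion at $t$ lies in $S$, $C$ cannot enter any branch, so $C\subseteq\beta(t)\setminus S\subseteq L$. The roots are used only here: the top vertex of an untouched column is a root and hence not in $L$.

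What each route buys:
\begin{itemize}
\item The paper's route needs no grid geometry and applies to any rooted graph with a well-linked root set of size $3k+1$. Its final count needs a little care, because each $A_i$ contains $\beta(t)$. One also needs $|K\cap B_i|\le|\beta(t)\cap\beta(t_i)|$ for the branch $B_i$ beyond each neighbour $t_i$, which yields $|K|\le 3|\beta(t)\setminus L|\le 3k$.
\item Your route avoids the degree reduction, the well-linkedness claim and any root counting. It is also quantitatively stronger. If you orient each edge toward the union of rows and columns missed by its adhesion, rather than by column majority, the same proof already works for $\mathbf{W}_{k+1}$. This gives $\mathsf{tw}_K(\mathbf{W}_n,K_{\mathbf{W}_n})\ge n-1$.
\end{itemize}

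When writing it up, fix the following points.
\begin{itemize}
\item Say explicitly why the column-majority orientation is well defined. For an adhesion $S_e$ with $|S_e|\le k$, the at least $2k+1$ columns missed by $S_e$ are joined through a row missed by $S_e$, so they lie on one side. The other side can then meet at most $k<n/2$ columns.
\item Delete the claim that $L$ is independent; it is false, since a whole component of $W[L]$ may sit in one leaf bag.
\item Delete the step that removes leaf bags, since it can lose edges of $W-L$. Removing $L$ from every bag is the correct operation. Neither this step nor the previous claim is used.
\item $C$ contains no vertex of $S$, by definition, so it does not consist of ``$L$-vertices plus parts of $S$''.
\item In the bramble phrasing, it is \emph{some} bag that meets every cross, not every bag. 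What rules out these meetings all happening inside $L$ is your key fact together with the root on the cross. The key fact says that a cross avoiding $S$ and meeting $\beta(t)\cap L$ lies inside $\beta(t)\cap L$.
\end{itemize}
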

\begin{proof}
    By \cref{obs:KfreeTWminormonotone}, it suffices to show that the $K$-free treewidth of $(\mathbf{W}_{3k+1}, K_{\mathbf{W}_{3k+1}})$ is at least $k$. Let $(\mathbf{W}_{3k+1}, K_{\mathbf{W}_{3k+1}}) = (W,K)$. Note that $K$ is highly connected set. That is, for any $A,B \subseteq K$, there are $\min\{|A|,|B|\}$ vertex disjoint $(A,B)$-paths in $W$. Suppose for contradiction there exists a tree $K$-free-decomposition $(T,\beta,L)$ of $(W,K)$ of width less than $k$. We may assume that $T$ has maximum degree at most 3 as follows. For any vertex $t \in V(T)$ with degree larger than 3, create a new vertex $t'$ adjacent to $t$ and make two of the neighbors of $t$ be adjacent to $t'$ instead of $t$. By setting $\beta(t') = \beta(t)$, we obtain a tree $K$-free-decomposition of width less than $k$. By repeating this process we may assume that $T$ has maximum degree at most 3.
    
    Each edge $t_1t_2 \in E(T)$ corresponds to a separation $(G_1,G_2)$ in $G$ of order at most $k$. That is, if $T_1,T_2$ are the components of $T-t_1t_2$ containing $t_1,t_2$ respectively, then $V(G_1) = \bigcup_{t \in V(T_1)} \beta(t)$, $V(G_2) = \bigcup_{t \in V(T_2)} \beta(t)$, and there are no edges in $G$ with one end in $V(G_1) \setminus V(G_2)$ and on end in $V(G_2) \setminus V(G_1)$. Because $K$ is highly connected, there exists a unique side of the $i \in [2]$ such that $G_i$ contains at most $k$ vertices from $K$. We can orient the edges of $T$ to point away from this side, and because $T$ is acyclic, there exists a vertex $t \in V(T)$ which is a sink. Let $T_1,  \dots, T_\ell$ be the subtrees of $T$ whose union is $T$ and such that each $T_i,T_j$ pairwise intersect exactly at $t$. Note that $\ell = d(t) \leq 3$. For $i \in [\ell]$, let $A_i = \bigcup_{t' \in V(T_i)} \beta(t')$. Then $\bigcup_{i \in [\ell]} A_i = V(G)$ and each $A_i$ contains at most $k$ of the vertices of $K$, contradicting that $|K| = 3k+1$.
\end{proof}

\section{(tame) \ocp-treewidth}\label{sec:ocptw}

Odd-Cycle-Packing-treewidth (or \ocp-treewidth) was introduced by Choi, Gorsky, Kim, McFarland, and Wiederrecht~\cite{ChoiGKMW2025OCPtw} as a variant of treewidth concerned with the odd cycle packing number of each bag. For notational convenience we will only be concerned with tame \ocp-tree-decompositions, and we do not define \ocp-tree-decompositions in general. The minimum width of a tame \ocp-tree-decomposition is functionally equivalent to the minimum width of an \ocp-tree-decomposition, see \cite[Theorem~4.2]{ChoiGKMW2025OCPtw}, but the definition of a tame \ocp-tree-decomposition matches more closely with the definition of a \tdm-tree-decomposition.

\begin{definition}[tame \ocp-tree-decomposition]
    A \emph{tame \ocp-tree-decomposition} of a graph $G$ is a triple $(T, \beta, \alpha)$ where $T$ is a tree and $\beta, \alpha \colon V(T) \rightarrow 2^{V(G)}$ such that
    \begin{enumerate}
        \item $(T, \beta)$ is a tree decomposition of $G$,
        \item $\alpha(t) \subseteq \beta(t)$ for each $t \in V(T)$, and
        \item for each $tt' \in E(T)$, $|(\beta(t) \cap \beta(t'))\setminus \alpha(t)| \leq 1$.
    \end{enumerate}
    The \emph{tame width (t-width)} of a tame \ocp-tree-decomposition $(T,\beta,\alpha)$ is:
    \begin{align*}
        \max_{t\in V(T)}|\alpha(t)| +  \ocp(G[\beta(t)\setminus\alpha(t)]).
    \end{align*}
    We note that this definition of width differs by at most 1 from the usual definition of the width for a tame \ocp-tree-decomposition, but is easier to work with for tame decompositions.
    The \emph{\tocp-treewidth} of a graph $G$, denoted $\tocptw(G)$, is the minimum tame width of a tame \ocp-tree-decomposition of $G$.
\end{definition}

We can extend this definition to signed graphs $(G, \gamma)$ by letting the odd cycle packing number in the above definition refer to odd cycle packing number of the signed graph. Note that $\tocptw(G,\gamma_1) = \tocptw(G)$. Similarly we can extend the definition to rooted signed graphs by ignoring the set of roots. Note that for a rooted signed graph $(G,\gamma,\varnothing)$, the sets $\alpha(t)$ in the above definition are protectors for the bags $\beta(t)$. In particular, if $(T,\beta,\alpha,J)$ is a \tdm-tree-decomposition of $(G,\gamma,K)$, then $(T,\beta,\alpha)$ is a tame \ocp-tree-decomposition of $(G,\gamma)$.

Choi et al. noted that (tame) \ocp-treewidth is odd-minor-monotone, but we show more generally that the \tocp-treewidth of signed graphs is minor monotone with respect to signed graph minors.

\begin{lemma}\label{lem:OCPTWsignedGraphMinorMonotone}
    Let $(H,\gamma_H)$ be a minor of $(G,\gamma_G)$. Then $\tocptw(H,\gamma_H) \leq \tocptw(G,\gamma_G)$.
\end{lemma}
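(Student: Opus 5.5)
We argue one operation at a time. A signed graph minor arises from $(G,\gamma_G)$ by a finite sequence of vertex deletions, edge deletions, shiftings and contractions of even edges, and tame width never increases under a single such step. So it suffices to take a tame \ocp-tree-decomposition $(T,\beta,\alpha)$ of $(G,\gamma_G)$ and, for each operation, build a tame \ocp-tree-decomposition of the result whose tame width is no larger.

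\textbf{Shifting.} The underlying graph is unchanged, so $(T,\beta,\alpha)$ is still a tame decomposition. Shifting preserves the parity of every cycle, so for every $t$ the quantity $\ocp(G[\beta(t)\setminus\alpha(t)],\gamma)$ is unchanged.

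\textbf{Deletions.} Deleting an edge keeps $(T,\beta)$ a tree decomposition. Every cycle of the smaller graph is a cycle of the old one with the same parity, so the odd cycle packing number of each bag can only drop. Deleting a vertex $v$, we remove $v$ from every $\beta(t)$ and every $\alpha(t)$. Sizes of $\alpha(t)$ and of each $(\beta(t)\cap\beta(t'))\setminus\alpha(t)$ can only decrease, and each $G[\beta(t)\setminus\alpha(t)]$ becomes an induced subgraph of what it was, so its \ocp\ can only decrease.

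\textbf{Contracting an even edge $uv$ into a new vertex $w$.} This is the only case needing care. Replace $u$ and $v$ by $w$ in every bag $\beta(t)$ containing either of them. In $\alpha(t)$, put $w$ in place of $u$ and $v$ exactly when $\alpha(t)\cap\{u,v\}\neq\varnothing$.

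\begin{enumerate}
    \item \emph{Tree decomposition.} The node sets $\{t:u\in\beta(t)\}$ and $\{t:v\in\beta(t)\}$ are connected subtrees, and they meet because some bag contains the edge $uv$. So their union, which is the node set containing $w$, is connected. Edges of the contracted graph are clearly covered.
    \item \emph{Protector sizes.} Clearly $|\alpha'(t)|\le|\alpha(t)|$.
    \item \emph{Adhesion condition.} Suppose $w\in(\beta'(t)\cap\beta'(t'))\setminus\alpha'(t)$. Then neither $u$ nor $v$ is in $\alpha(t)$, and some element of $\{u,v\}$ lies in $\beta(t)\cap\beta(t')$. That element already counted in the old set $(\beta(t)\cap\beta(t'))\setminus\alpha(t)$. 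Every other vertex of the new set counts exactly as before, so the new set is at most as large as the old one and its size is at most $1$.
    \item \emph{Odd cycle packing.} If $w\notin\alpha'(t)$, then $G'[\beta'(t)\setminus\alpha'(t)]$ is obtained from $G[\beta(t)\setminus\alpha(t)]$ by contracting the even edge $uv$ (when both ends are present) or by renaming a single vertex. Given a packing of odd cycles in the contracted bag, lift each cycle through $w$ back to a cycle in the original bag. This is done by reinserting the even edge $uv$ if needed. Parity is preserved since $\gamma(uv)=0$, and disjointness is preserved. Hence \ocp\ does not increase.
\end{enumerate}

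I expect the contraction step to be the only real obstacle. Its delicate points are the lifting of odd cycles through an even edge when only one of $u,v$ lies in the bag (a renaming, which is trivial) and the adhesion bookkeeping in item 3. Both are handled by the choice that $w$ is protected as soon as either endpoint was protected.
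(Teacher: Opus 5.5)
Your route is the paper's: replace $u,v$ by $w$ in every $\beta(t)$ and $\alpha(t)$ and check each condition. Your adhesion check in item~3 is more explicit than the paper's. However, item~4 contains a step you call trivial that is not. Suppose $u\in\beta(t)\setminus\alpha(t)$ and $v\notin\beta(t)$. Then $G'[\beta'(t)\setminus\alpha'(t)]$ is \emph{not} automatically a relabelling of $G[\beta(t)\setminus\alpha(t)]$: every edge $vx$ of $G$ with $x\in\beta(t)\setminus\alpha(t)$ becomes an edge $wx$ inside the new bag, even though $v$ was never in the old one. Such edges can create odd cycles with no preimage in the old bag. For example, take the tree decomposition with adjacent nodes $t,t_1$, $\beta(t)=\{u,x,y\}$, $\beta(t_1)=\{u,v,x\}$, $\alpha(t)=\varnothing$, and edges $uv$ (even) and $vx,uy,xy$ with $\gamma(vx)+\gamma(uy)+\gamma(xy)=1$. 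The bag at $t$ is acyclic before contracting $uv$ and contains the odd triangle $wxy$ afterwards. Your rule for when to protect $w$ plays no role here, since neither endpoint lies in $\alpha(t)$.

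What excludes this is tameness. Suppose $vx\in E(G)$ with $x\in\beta(t)\setminus\alpha(t)$ and $x\neq u$. The subtree $\{s : v\in\beta(s)\}$ avoids $t$, so it lies in one component of $T-t$; let $t_1$ be the neighbour of $t$ in that component. The subtrees of $u$ and of $x$ both contain $t$ and meet the subtree of $v$ (at bags covering $uv$ and $vx$), so both contain $t_1$. Hence $u,x\in(\beta(t)\cap\beta(t_1))\setminus\alpha(t)$, contradicting the adhesion bound. With this added, the case really is a relabelling and your argument goes through; the example above violates exactly this bound.

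There are two smaller omissions. Item~3 should justify that one of $u,v$ lies in $\beta(t)\cap\beta(t')$: otherwise the edge $tt'$ would separate the subtrees of $u$ and $v$, which intersect. Item~4 should also treat the case $w\in\alpha'(t)$, which is easy: there $\beta'(t)\setminus\alpha'(t)=\beta(t)\setminus(\alpha(t)\cup\{u,v\})$, on which $G$ and $G'$ agree. The paper's own proof skips the same point as your item~4; it justifies the contraction step only by remarking that contracting an even edge does not increase the \ocp\ of any subgraph.
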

\begin{proof}
    Clearly deleting an edge or vertex can only decrease the \tocp-treewidth. Shifting also does not change the \tocp-treewidth because it does not change the odd cycle packing of any subgraph. Thus we may assume that $(H,\gamma_H)$ is obtained from $(G,\gamma_G)$ by contracting a single even edge. Let $(T,\beta,\alpha)$ be a tame \ocp-tree-decomposition of $(G,\gamma_G)$. We then replace both ends of the edge with the new vertex in every $\beta(t),\alpha(t)$ for $t \in V(T)$ to obtain a tame \ocp-tree-decomposition of $(H,\gamma_H)$ of only smaller t-width. We note that contracting an even edge can only decrease the odd cycle packing number of every subgraph.
\end{proof}

The following lemma will be helpful for translating between decompositions for unsigned graphs and signed graphs.

\begin{lemma}\label{lem:SubdivisionsDontChangeOCPTW}
    Let $(G',\gamma')$ be obtained from $(G,\gamma)$ by replacing an edge with a path of the same parity. Then $\tocptw(G',\gamma') = \tocptw(G,\gamma)$. Furthermore there exists an algorithm which takes as input $(G',\gamma')$, $(G,\gamma)$, and a tame \ocp-tree-decomposition of $(G',\gamma')$ of t-width $k$ and returns a tame $\ocp$-tree-decomposition of $(G,\gamma)$ of t-width $k$ with the same number of bags in time $g(k)\mathsf{poly}(|V(G')|)$.
\end{lemma}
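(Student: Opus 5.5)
The inequality $\tocptw(G,\gamma) \le \tocptw(G',\gamma')$ is immediate from \cref{lem:OCPTWsignedGraphMinorMonotone}. Indeed, $(G,\gamma)$ is a signed graph minor of $(G',\gamma')$: we shift along the path $P$ replacing $e=uv$ so that all but one of its edges are even, and then contract those even edges. By induction it also suffices to treat a path $P = u w v$ with a single new internal vertex $w$. A longer path is obtained by repeating this operation, since replacing an edge by a path of length $2$ preserves parity once we choose the two edge labels appropriately.

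For the reverse inequality $\tocptw(G',\gamma') \le \tocptw(G,\gamma)$, take a tame \ocp-tree-decomposition $(T,\beta,\alpha)$ of $(G,\gamma)$ and a node $t$ with $u,v \in \beta(t)$. Attach a new leaf $t'$ to $t$ with $\beta(t') = \{u,v,w\}$ and $\alpha(t') = \{u,v\}$. The adhesion $\{u,v\}$ must satisfy $|\{u,v\}\setminus\alpha(t)| \le 1$, and this may fail. To avoid this, I would instead place $w$ into $\beta(t)$ itself, or into $\alpha(t)$ when necessary.

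Adding $w$ to $\beta(t)$ but not to $\alpha(t)$ creates no new adhesions, because $w$ lies in a single bag. The odd cycle packing number of $G'[\beta(t)\setminus\alpha(t)]$ equals that of $G[\ldots]$ together with the edge $uv$. If both $u,v \notin \alpha(t)$, this holds because $uwv$ acts as $uv$. If one of $u,v$ lies in $\alpha(t)$, then $w$ is a pendant vertex or isolated, and the packing number is unchanged or smaller. So the t-width stays the same and the bag count is unchanged.

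The algorithmic statement runs in the reverse direction: from a decomposition $(T,\beta',\alpha')$ of $(G',\gamma')$, we want one of $(G,\gamma)$ with the same number of bags. Here I would mimic the contraction argument of \cref{lem:OCPTWsignedGraphMinorMonotone}. Shift so that $uw$ is even, then replace $w$ by $u$ in every $\beta'(s)$ and $\alpha'(s)$. The result is still a tree decomposition, since the subtrees of $u$ and $w$ intersect because the edge $uw$ is covered. Protectors stay protectors, because identifying two vertices cannot enlarge $(\beta\cap\beta')\setminus\alpha$. The odd cycle packing number of each bag does not increase when an even edge is contracted. The edge $uv$ is covered, since some bag contained $w$ and $v$.

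This step is linear per subdivision vertex, giving time $\mathsf{poly}(|V(G')|)$, which is well within $g(k)\mathsf{poly}$. The main subtlety is the relabelling via shifting together with checking that the tame protector condition survives identification. The case where $w \in \alpha'(s)$ but $u \notin \alpha'(s)$ needs a check: the merged vertex should be placed in $\alpha$ whenever either preimage was in it. This still preserves $|\alpha|$ as non-increasing, and the adhesion condition holds because a vertex outside $\alpha$ after the merge was outside $\alpha$ before it.
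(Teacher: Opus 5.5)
Your proposal is correct and is essentially the paper's proof. The paper likewise gets $\tocptw(G',\gamma')\le\tocptw(G,\gamma)$ by adding the internal path vertices to a single bag containing both ends, outside $\alpha(t)$. It gets the reverse inequality from \cref{lem:OCPTWsignedGraphMinorMonotone}, and the algorithm by identifying the internal path vertices with an endpoint in every bag. Your reduction to one subdivision vertex and your explicit protector check are only added detail.

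One point that you and the paper both leave implicit: after the identification, a bag containing only one of $u,w$ could a priori gain an edge. For example, a bag containing $u,v$ but not $w$ would now contain the edge $uv$, so its graph would no longer be a contraction of the old one. Tameness rules this out, because both ends of such an edge would lie outside $\alpha(s)$ in the adhesion from $s$ toward the subtree of the missing vertex.
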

\begin{proof}
    Suppose $(G',\gamma')$ is obtained from $(G,\gamma)$ by replacing the edge $xy \in E(G)$ by the path $P = xv_1v_2\dots v_\ell y$ where $\gamma'(e) = \gamma(e)$ for all $e \in E(G) \cap E(G')$ and $\sum_{e \in E(P)} \gamma'(e) = \gamma(xy)$. Let $(T,\beta,\alpha)$ be a tame \ocp-tree-decomposition for $(G,\gamma)$. There is some bag $\beta(t)$ for which $x,y \in \beta(t)$. Let $\beta'(t) = \beta(t) \cup V(P)$. For all $t' \in V(T) \setminus \{t\}$, let $\beta'(t') = \beta(t')$. Then $(T,\beta',\alpha)$ is a tame \ocp-tree-decomposition of $(G',\gamma')$ of the same t-width. Note that $\ocp(G'[\beta'(t) \setminus \alpha(t)],\gamma') = \ocp(G[\beta(t) \setminus \alpha(t)], \gamma)$. Hence $\tocptw(G',\gamma') \leq \tocptw(G,\gamma)$. We note that $(G,\gamma)$ is a minor of $(G',\gamma')$, and so $\tocptw(G,\gamma) \leq \tocptw(G',\gamma')$.

    Because $(G,\gamma)$ is obtained from $(G',\gamma')$ by only shifting and contracting edges in $P$, one can simply identify the internal vertices of $P$ with one of its endpoints in each bag to algorithmically obtain a tame \ocp-tree-decomposition for $(G,\gamma)$ as in the proof of \cref{lem:OCPTWsignedGraphMinorMonotone}.
\end{proof}

Choi et al.~\cite{ChoiGKMW2025OCPtw} also gave an FPT approximation for $\tocp$-treewidth, and they gave an excluded odd-minor characterization for having bounded \tocp-treewidth.

\begin{theorem}[Theorem 4.2~\cite{ChoiGKMW2025OCPtw}]\label{thm:FPTapproxForOCPTW}
    There exists an algorithm that, given an $n$-vertex graph $G$ and a non-negative integer $k$, either computes a tame \ocp-tree-decomposition $(T,\beta,\alpha)$ for $G$ of t-width at most $f_{\ref{thm:FPTapproxForOCPTW}}(k)$ and with $|V(T)| \in \mathcal{O}(n)$, or correctly returns that $\tocptw(G) > k$. The algorithm runs in time $\mathcal{O}_k(n^6)$. Furthermore, $f_{\ref{thm:FPTapproxForOCPTW}}(k) \in \mathsf{poly}(k)$.
\end{theorem}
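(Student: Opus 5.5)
The plan is to follow the standard Robertson--Seymour recursive scheme for approximating treewidth, with ``balanced separator of bounded size'' replaced by ``bounded apex set plus a bag of bounded odd cycle packing number''. Since this result is \cite[Theorem~4.2]{ChoiGKMW2025OCPtw}, within this paper it suffices to import it. The sketch below is how I would reprove it.

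We design a recursive procedure $\mathsf{Decompose}(G',X)$. Here $G'$ is an induced subgraph of $G$, and $X\subseteq V(G')$ is the interface to the parent, with $|X|\le c(k)$. The procedure returns a tame \ocp-tree-decomposition of $G'$ whose root bag contains $X$ in its protector. It first tries to find a separation $(A,B)$ of $G'$ of order at most $c'(k)$ that splits $X$ in a balanced way. This is a bounded number of flow computations, giving the $\mathcal{O}_k(\mathsf{poly}(n))$ running time.
\begin{itemize}
\item If such a separation exists, we recurse on both sides with interfaces $(X\cap A)\cup(A\cap B)$ and $(X\cap B)\cup(A\cap B)$. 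We then glue the results at a new bag $X\cup(A\cap B)$ whose protector is the whole bag.
\item If no such separation exists, then $X$ is well-linked and defines a tangle $\mathcal{T}$ of order about $c'(k)$ in $G'$.
\end{itemize}

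The key step is the well-linked case. Here we use the structure theorem for odd cycles relative to a tangle, in the style of Reed's Escher walls and the Erd\H{o}s--P\'osa property for odd cycles with respect to highly connected sets. Either there are $\Omega(k)$ vertex-disjoint odd cycles that are each attached to $X$ by many disjoint paths, or there is a set $Z$ of size $\mathsf{poly}(k)$ hitting all odd cycles ``in the direction of $\mathcal{T}$''.

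In the first outcome, we route the attached odd cycles through the well-linked structure. This produces a parity handle $\mathcal{H}_k$ or parity vortex $\mathcal{V}_k$ as an odd-minor. Such a minor certifies $\tocptw(G)>k$, because these grids have large tocp-treewidth and the parameter is minor monotone by \cref{lem:OCPTWsignedGraphMinorMonotone}.

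In the second outcome, we take a bag $\beta(t)$ consisting of $X\cup Z$ together with the $\mathcal{T}$-large part. We set $\alpha(t)=X\cup Z$. Then $\ocp(G[\beta(t)\setminus\alpha(t)])$ is bounded, because all odd cycles there are hit by $Z$ or lie in small leaves. Every component $C$ of $G'-\beta(t)$ that is $\mathcal{T}$-small attaches to $\beta(t)\setminus\alpha(t)$ in at most one vertex after we add a bounded number of its attachments to the protector; this is exactly the tameness condition on adhesions. We then recurse on each such $C$ with interface equal to its neighbourhood.

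Finally, we check the bookkeeping. Each recursive call strictly decreases $|V(G')\setminus X|$, so the tree has $\mathcal{O}(n)$ nodes after merging trivial bags. The width is bounded by $|X|+|Z|+\ocp$, which is $\mathsf{poly}(k)$.

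The main obstacle is the dichotomy in the well-linked case. We must produce either a bounded hitting set that makes the big bag tame, with single-vertex adhesions outside $\alpha$, or one of the parity grids as an odd-minor with only polynomial loss. I would try to obtain it from the flat-wall and odd-minor grid theorems for Escher walls, applied to the tangle induced by $X$.
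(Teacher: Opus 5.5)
Citing \cite{ChoiGKMW2025OCPtw}, as your first paragraph proposes, is exactly what the paper does; it gives no proof of its own. Your self-contained reproof, however, has a genuine gap at its core. The well-linked-case dichotomy is false: either $\Omega(k)$ disjoint odd cycles well attached to $X$, or a $\mathsf{poly}(k)$-size set $Z$ hitting all odd cycles in the direction of $\mathcal{T}$. Reed's Escher walls, which you mention, are the counterexample.

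A large Escher wall has no two vertex-disjoint odd cycles. So a single bag with empty protector shows $\tocptw(G)\le 1$, and your algorithm must return a decomposition for every $k\ge 1$. Yet no bounded set meets all of its odd cycles. For any $Z$ of size $\mathsf{poly}(k)$, some odd cycles avoiding $Z$ still run through a large part of the wall, so no small separation cuts them off from $\mathcal{T}$. Since there are no two disjoint odd cycles, the first outcome is impossible as well.

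Your second outcome (``all odd cycles there are hit by $Z$'') silently replaces bounded odd cycle packing by bounded odd cycle transversal. That is the shape of an argument for bounded \emph{bipartite} treewidth, a parameter that is unbounded on Escher walls. Accommodating them is precisely why bags are only required to have bounded $\ocp$. The correct second outcome must be ``after deleting a small $Z$, the part of $G'$ that $\mathcal{T}$ points to has bounded $\ocp$''. Proving that failure of this for every small $Z$ forces $\mathbf{H}_k$ or $\mathbf{V}_k$ as an odd-minor, with polynomial loss, is essentially the grid theorem of Choi et al.\ (\cref{thm:OCPtwOddMinorGridThm}). That is exactly the step you leave open as ``the main obstacle.''

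The bookkeeping in the second outcome also fails as written. There is one protector per bag, so adding a bounded number of attachments of \emph{each} hanging component to $\alpha(t)$ makes $|\alpha(t)|$ unbounded when there are many components. The one-vertex adhesions must come from the structure itself. For example, take $\beta(t)$ to be $X\cup Z$ together with the block of $G'-Z$ that $\mathcal{T}$ points to; then each remaining piece attaches to $\beta(t)\setminus\alpha(t)$ through at most one cut vertex. This puts odd cycles hidden behind $2$-separations into the bag, which is another reason the bounded-$\ocp$ formulation is needed.

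Moreover, your child interfaces $N(C)$ may contain all of $X\cup Z$ plus one vertex. So the invariant $|X|\le c(k)$ is not preserved, and interfaces can grow along the recursion. Pieces should instead be cut off at $Z$ plus a single cut vertex, using the well-linkedness of $X$ to bound $|X\cap C|$ for each piece $C$. Finally, the sketch does not derive the stated $\mathcal{O}_k(n^6)$ running time or the polynomial width bound. Both depend on making the missing dichotomy algorithmic.
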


\begin{theorem}[Theorem 1.6 and Theorem 3.4~\cite{ChoiGKMW2025OCPtw}]\label{thm:OCPtwOddMinorGridThm}
    There exists a polynomial $f_{\ref{thm:OCPtwOddMinorGridThm}}: \mathbb{N} \rightarrow \mathbb{N}$ such that for every positive integer $k$ and every graph $G$,
    \begin{enumerate}
        \item if $G$ contains $\mathbf{H}_k$ or $\mathbf{V}_k$ as an odd-minor, then $\tocptw(G) \geq \frac{k}{2} - 1$, and
        \item if $\tocptw(G) \geq f_{\ref{thm:OCPtwOddMinorGridThm}}(k)$, then $G$ contains $\mathbf{H}_k$ or $\mathbf{V}_k$ as an odd-minor.
    \end{enumerate}
\end{theorem}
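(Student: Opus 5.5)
The theorem has two directions, and I would treat them separately. The lower bound (part 1) should follow from minor monotonicity plus a direct analysis of the two parity grids. The upper bound (part 2) needs a structural recursion built on the grid theorem.

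\textbf{Part 1.} By \cref{obs:oddMinorSignedGraphTotallyOdd}, an odd-minor $\mathbf{H}_k$ or $\mathbf{V}_k$ of $G$ gives a signed minor $(\mathbf{H}_k,\gamma_1)$ or $(\mathbf{V}_k,\gamma_1)$ of $(G,\gamma_1)$. By \cref{lem:OCPTWsignedGraphMinorMonotone} and $\tocptw(G,\gamma_1)=\tocptw(G)$, it then suffices to show $\tocptw(\mathbf{H}_k),\tocptw(\mathbf{V}_k)\ge k/2-1$.
\begin{enumerate}
\item Suppose for contradiction that $(T,\beta,\alpha)$ is a tame decomposition of t-width $w<k/2-1$.
\item Each edge $tt'$ of $T$ induces a separation of order at most $|\alpha(t)|+1\le w+1$. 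The cylindrical $(k\times 4k)$-grid $W$ defines a tangle of order about $k$, so we orient every edge of $T$ toward the side holding the bulk of $W$. As in the proof of \cref{lem:rootedGridLargeKFreeTW}, this produces a sink node $t$.
\item At $t$, delete $\alpha(t)$, which has fewer than $k/2$ vertices. Then discard the rows and columns of $W$ that meet $\alpha(t)$, together with their incident parity edges $x_ix_{2k-i+1}$ (resp.\ $x_{2i-1}x_{2i}$). A linear fraction of the concentric cycles and radial paths survives.
\item Every piece of $G-\alpha(t)$ hanging off $\beta(t)$ through a child adhesion meets $\beta(t)\setminus\alpha(t)$ in at most one vertex. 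Such a piece is therefore attached at a cut vertex, so it cannot carry a segment of any cycle through the surviving grid.
\item Consequently the surviving sub-grid, together with its parity edges, lies in the block structure inside $G[\beta(t)\setminus\alpha(t)]$, up to rerouting that stays within the bag.
\item In the handle, many disjoint odd cycles remain, each formed from a crossing parity edge plus grid paths. In the vortex, the paired edges $x_{2i-1}x_{2i}$ with short outer-cycle segments give many disjoint triangles-like odd cycles.
\item This yields $\ocp(G[\beta(t)\setminus\alpha(t)])+|\alpha(t)|\ge k/2-1$, the required contradiction.
\end{enumerate}
The delicate point is the bookkeeping in steps 4–6: justifying that one-vertex adhesions cannot "steal" the odd cycles.

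\textbf{Part 2.} I would prove the contrapositive with a standard recursive decomposition that maintains a small interface set $X$ (of size about $3f(k)$), in the style of Robertson--Seymour.
\begin{enumerate}
\item If the treewidth of $G$ is bounded by a polynomial in $k$, take a treewidth decomposition with $\alpha(t)=\beta(t)$; its t-width is bounded.
\item Otherwise, by the polynomial grid theorem, $G$ contains a large wall $M$. Two cases arise from the parity structure relative to $M$.
\item \emph{Many odd cycles attach to $M$.} Either the wall has many disjoint odd "jumps" that cross, which routes a parity handle $\mathbf{H}_k$, or they are nested or arranged in parallel along a face, which routes a parity vortex $\mathbf{V}_k$. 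Here I would invoke the Erdős--Pósa-type results for odd cycles attached to walls (Reed's escape property and the Geelen et al.\ odd-minor structure).
\item \emph{Neither outcome occurs.} Then a bounded set $A$ of apex vertices meets all odd cycles that interact with $M$ beyond a bounded packing. In this case I create a bag containing $A\cup X$ with protector $\alpha=A\cup X$. It is glued to components of $G-(A\cup X)$, each of which attaches to the bag through at most one further vertex, or is bipartite relative to $M$ and can be absorbed. I then recurse on those components with a new interface of size at most $|A|+1$.
\end{enumerate}
I expect the main obstacle to be exactly this dichotomy: converting "no $\mathbf{H}_k$ and no $\mathbf{V}_k$ as odd-minor" into a bounded apex set making the wall region essentially bipartite. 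This is the deep step, and I would take it from the odd-minor structure theory rather than reprove it.
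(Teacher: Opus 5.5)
The paper does not prove this statement; it imports it from Choi et al.~\cite{ChoiGKMW2025OCPtw} (Theorems~1.6 and~3.4 there). Your argument therefore has to stand on its own.

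\textbf{Part~1} is a workable route. At every node $t$, each branch meets $\beta(t)\setminus\alpha(t)$ in at most one vertex. Hence each block of $G-\alpha(t)$ lies either in the bag or inside a single branch, and at a sink the tangle forces the large block of the surviving grid into the bag. But step~3 fails as written. Every parity edge of $\mathbf{H}_k$ and of $\mathbf{V}_k$ has both ends on the outermost concentric cycle. So discarding each row that meets $\alpha(t)$ ``together with its incident parity edges'' destroys every odd cycle as soon as $\alpha(t)$ contains a single vertex of that cycle. You must instead keep the undamaged segments of the outer cycle and count locally. The count needed is that $\alpha(t)$ either hits, or cuts off into branches (each branch contributing its one attachment vertex), at most about $2|\alpha(t)|$ cycles of a packing of size $k$. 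That count is what produces $k/2-1$ rather than merely $\Omega(k)$, and your step~7 asserts it rather than deriving it. Also, the chords $x_ix_{2k-i+1}$ are nested, not crossing: $\mathbf{H}_k$ is planar with exactly two odd faces, and its odd cycles are the cycles separating them.

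\textbf{Part~2} is where there is a genuine gap. The step you defer is: excluding $\mathbf{H}_k$ and $\mathbf{V}_k$ as odd-minors yields a bounded apex set, after which the region of $M$ has bounded odd cycle packing and everything else attaches through the apex set plus at most one vertex. This is not an off-the-shelf consequence of Reed's escape property or of the odd-$K_t$ structure theory; it is essentially the content of Theorem~3.4 of Choi et al. So the hard direction remains unproved.

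The scaffolding around that step also does not work as described.
\begin{enumerate}
\item With bag $A\cup X$ and $\alpha(t)=A\cup X$, the non-protector part is empty. A component $C$ of $G-(A\cup X)$ has adhesion $N(C)\subseteq A\cup X$, which can contain all of $X$. So the child interface is not bounded by $|A|+1$, and recursing on the component containing most of $M$ reproduces the same situation with a larger interface.
\item In a tame decomposition the wall region itself must sit in $\beta(t)\setminus\alpha(t)$, which may be arbitrarily large, and you must then bound its odd cycle packing. For that, ``bipartite relative to $M$'' is the wrong absorption criterion. A bipartite piece attached at $u,v$ through a path whose parity disagrees with the $u$--$v$ paths in the bag creates a new odd cycle. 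Many such odd jumps create unboundedly many disjoint ones, which is how $\mathbf{V}_k$ arises.
\item ``Crossing jumps give $\mathbf{H}_k$, nested ones give $\mathbf{V}_k$'' is not the right dichotomy. Both obstructions are planar, and a large parity handle already appears in a wall with a single odd cell deep in its interior, with no jumps at all.
\end{enumerate}

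What is missing is a local structure theorem relative to the tangle of $M$ that separates three cases: a global parity break (giving $\mathbf{H}_k$), many disjoint local parity breaks (giving $\mathbf{V}_k$), and bounded odd cycle packing after deleting a bounded set. You also need the bookkeeping showing that every remaining piece meets the bag in at most one vertex outside the protector.
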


The above imply the same results for signed graphs by utilizing \cref{obs:oddMinorSignedGraphEquivalenceMinDeg3} and \cref{lem:SubdivisionsDontChangeOCPTW}.

\begin{corollary}\label{cor:FPTapproxForOCPTWsignedgraphs}
    There exists an algorithm that, given an $n$-vertex, $m$-edge signed graph $(G, \gamma)$ and a non-negative integer $k$, either computes a tame \ocp-tree-decomposition $(T,\beta,\alpha)$ for $(G,\gamma)$ of t-width at most $f_{\ref{thm:FPTapproxForOCPTW}}(k)$ and with $|V(T)| \in \mathcal{O}(n)$, or correctly returns that $\tocptw(G) > k$. The algorithm runs in time $g(k)\mathsf{poly}(m,n)$.
\end{corollary}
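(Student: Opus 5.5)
We reduce to the unsigned case of \cref{thm:FPTapproxForOCPTW} by passing from $(G,\gamma)$ to an all-odd signed graph. Let $G'$ be obtained from $G$ by subdividing every even edge of $(G,\gamma)$ exactly once. Then $(G',\gamma_1)$ arises from $(G,\gamma)$ by replacing each even edge with a path of length two whose two edges are both odd; this path has even parity, matching the original edge. Each odd edge stays a single odd edge. By applying \cref{lem:SubdivisionsDontChangeOCPTW} once for each even edge, we get $\tocptw(G',\gamma_1)=\tocptw(G,\gamma)$. Since $\tocptw(G',\gamma_1)=\tocptw(G')$, the unsigned parameter of $G'$ equals the signed parameter of $(G,\gamma)$. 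The graph $G'$ has $n+m$ vertices and can be built in polynomial time.

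The algorithm then runs the algorithm of \cref{thm:FPTapproxForOCPTW} on $G'$ with parameter $k$.

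If it reports $\tocptw(G')>k$, then $\tocptw(G,\gamma)>k$ by the equality above, and we return this answer.

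Otherwise it returns a tame \ocp-tree-decomposition of $G'$ of t-width at most $f_{\ref{thm:FPTapproxForOCPTW}}(k)$. In $G'$ every edge is odd, so this is exactly a tame \ocp-tree-decomposition of $(G',\gamma_1)$. We convert it to a decomposition of $(G,\gamma)$ by undoing the subdivisions one at a time using the algorithmic part of \cref{lem:SubdivisionsDontChangeOCPTW}. Concretely, in every bag and every protector we identify each subdividing vertex with one of the ends of its edge, as in the proof of \cref{lem:OCPTWsignedGraphMinorMonotone}. This corresponds to shifting at the subdividing vertex and then contracting one of its two now-even edges.

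This conversion does not increase the t-width and does not change the number of bags. The tree decomposition axioms are preserved under contraction. Protector sizes can only shrink. The adhesion condition survives identification because identifying vertices never increases $|(\beta(t)\cap\beta(t'))\setminus\alpha(t)|$: if the two identified vertices are on opposite sides of $\alpha(t)$, we put the merged vertex into $\alpha(t)$ only if one of them was already there. Odd cycle packing numbers can only drop under even-edge contraction.

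The number of bags stays at $\mathcal{O}(|V(G')|)=\mathcal{O}(n+m)$. If we need the stated bound of $\mathcal{O}(n)$ bags, we trim the tree as follows. Any bag containing no vertex of $G$ after identification is a copy of a neighbouring bag and can be merged into it. Also, each subdividing vertex lies only in bags together with an end of its edge, so we may assume no leaf exists solely because of such a vertex. Repeatedly contracting tree edges $tt'$ with $\beta(t)\subseteq\beta(t')$, keeping $\alpha(t')$ on the merged node, leaves $\mathcal{O}(n)$ bags without changing the width.

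The running time is $\mathcal{O}_k((n+m)^6)$ for the call on $G'$, plus $m$ polynomial-time contraction steps, which gives $g(k)\mathsf{poly}(m,n)$.

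The main obstacle is bookkeeping rather than ideas. We must check that the identification step keeps $\alpha(t)$ a valid protector; this is the adhesion-size argument above. We must also check the bag-count reduction, which is routine.
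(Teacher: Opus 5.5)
Your proposal is correct and follows the paper's proof essentially step for step: subdivide every even edge once, run \cref{thm:FPTapproxForOCPTW} on $G'$, pull the decomposition back to $(G,\gamma)$ via \cref{lem:SubdivisionsDontChangeOCPTW}, and then reduce to $\mathcal{O}(n)$ bags by contracting tree edges $t_1t_2$ with $\beta(t_1)\subseteq\beta(t_2)$ while keeping $\alpha(t_2)$. Your two preliminary trimming remarks are not needed, and the claim that a subdividing vertex only lies in bags together with an end of its edge is not true in general; the paper uses only the final contraction step, and gets the $\mathcal{O}(n)$ bound from the injection sending each non-root node to a vertex that is in its bag but not in its parent's bag.
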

\begin{proof}
    Let $G'$ be the graph obtained from $(G,\gamma)$ by subdividing every even edge exactly once. Then by \cref{thm:FPTapproxForOCPTW}, we can compute a tame \ocp-tree-decomposition $(T,\beta,\alpha)$ of $G'$ of t-width at most $f_{\ref{thm:FPTapproxForOCPTW}}(k)$ and with $|V(T)| \in \mathcal{O}(|V(G')|) = \mathcal{O}(n + m)$, or correctly return that $\tocptw(G,\gamma) = \tocptw(G',\gamma_1) = \tocptw(G') > k$ by \cref{lem:SubdivisionsDontChangeOCPTW}. Note that $(T,\beta,\alpha)$ is a tame \ocp-tree-decomposition of $(G',\gamma_1)$ of t-width $f_{\ref{thm:FPTapproxForOCPTW}}(k)$. Then by \cref{lem:SubdivisionsDontChangeOCPTW}, we can compute a tame \ocp-tree-decomposition of $(G,\gamma)$ of the same t-width.
    
    We can augment the decomposition such that $|V(T)| \in \mathcal{O}(n)$ as follows. For any edge $t_1t_2 \in E(T)$ such that $\beta(t_1) \subseteq \beta(t_2)$, we contract the edge $t_1t_2$ in $T$ to create the new vertex $t$. We then set $\beta(t) = \beta(t_2)$ and $\alpha(t) = \alpha(t_2)$. Note that for any vertex $t'$ which was originally adjacent to $t_1$ in $T$, $\beta(t_1) \cap \beta(t') = \beta(t_2) \cap \beta(t')$, and so the resulting decomposition still satisfies $|(\beta(t) \cap \beta(t')) \setminus \alpha(t)| \leq 1$ for all $t'$ adjacent to $t$. The resulting decomposition has $|V(T)| \in \mathcal{O}(n)$. To see this, root $T$ at an arbitrary vertex $r$. It follows that for any vertex $t \in V(T)$ which is not the root, the bag of $t$ contains a new vertex $v$ not contained in the bag of $t$'s parent such that $\{x \in V(T) : v \in \beta(x)\}$ is contained in the subtree rooted at $t$. This defines an injection from $V(T) \setminus \{r\}$ to $V(G)$, and so $|V(T)| \in \mathcal{O}(n)$.
\end{proof}

\begin{corollary}\label{cor:OCPtwSignedGraphGridThm}
    There exists a polynomial $f_{\ref{thm:OCPtwOddMinorGridThm}}: \mathbb{N} \rightarrow \mathbb{N}$ such that for every positive integer $k$ and every signed graph $(G, \gamma)$,
    \begin{enumerate}
        \item if $(G,\gamma)$ contains $(\mathbf{H}_k, \gamma_1)$ or $(\mathbf{V}_k, \gamma_1)$ as a signed graph minor, then $\tocptw(G, \gamma) \geq k/2-1$, and
        \item if $\tocptw(G,\gamma) \geq f_{\ref{thm:OCPtwOddMinorGridThm}}(k)$, then $(G,\gamma)$ contains $(\mathbf{H}_k,\gamma_1)$ or $(\mathbf{V}_k,\gamma_1)$ as a signed graph minor.
    \end{enumerate}
\end{corollary}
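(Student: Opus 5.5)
We derive \cref{cor:OCPtwSignedGraphGridThm} from \cref{thm:OCPtwOddMinorGridThm} by passing from $(G,\gamma)$ to the unsigned graph $G'$ obtained by subdividing every even edge of $G$ exactly once. The two facts that make this work are already available. First, $\tocptw(G,\gamma) = \tocptw(G',\gamma_1) = \tocptw(G')$: apply \cref{lem:SubdivisionsDontChangeOCPTW} once for each even edge, since an even edge replaced by a path of two odd edges keeps its parity, and then use $\tocptw(G',\gamma_1)=\tocptw(G')$. Second, the signed minors in question translate into odd-minors of $G'$ via \cref{obs:oddMinorSignedGraphEquivalenceMinDeg3}.

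For part 1, suppose $(G,\gamma)$ contains $(\mathbf{H}_k,\gamma_1)$ or $(\mathbf{V}_k,\gamma_1)$ as a signed graph minor. I would bypass the minimum degree hypothesis altogether. $(G,\gamma)$ is a signed minor of $(G',\gamma_1)$: shift at each subdividing vertex and contract one of its two edges. Hence $(G',\gamma_1)$ contains the relevant graph with $\gamma_1$ as a signed minor. By \cref{obs:oddMinorSignedGraphTotallyOdd}, $G'$ then contains $\mathbf{H}_k$ or $\mathbf{V}_k$ as an odd-minor. Part 1 of \cref{thm:OCPtwOddMinorGridThm} gives $\tocptw(G') \ge k/2-1$, and the equality above transfers this bound to $(G,\gamma)$.

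For part 2, suppose $\tocptw(G,\gamma) \ge f_{\ref{thm:OCPtwOddMinorGridThm}}(k)$. Then $\tocptw(G') \ge f_{\ref{thm:OCPtwOddMinorGridThm}}(k)$, so $G'$ contains $\mathbf{H}_k$ or $\mathbf{V}_k$ as an odd-minor. We need to pull this back to a signed minor of $(G,\gamma)$, and this is the only delicate step. The forward implication of \cref{obs:oddMinorSignedGraphEquivalenceMinDeg3} requires $H$ to have minimum degree $3$, but the cylindrical grid has degree-$2$ vertices: inner-cycle vertices, and outer-face vertices not in $X$.

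I see two ways around this. The first is to argue directly, as in the proof of that observation, with weaker hypotheses. A subdividing vertex of $G'$ has degree $2$. If it is used as a branch vertex in the odd-minor model, then its tree is a single vertex or can be rerouted. Alternatively, we can first pass to a graph of minimum degree $3$ containing the grid as a minor. The second, cleaner way is to use a monotone choice of $k$: $\mathbf{H}_{k'}$ for a slightly larger $k'$ contains a subdivision-free signed minor of $\mathbf{H}_k$ whose degree-$2$ vertices can be suppressed. Concretely, contract the degree-$2$ vertices of $\mathbf{H}_k$ into neighbours to get a minimum-degree-$3$ graph $\hat H$ whose subdivisions include $\mathbf{H}_k$ with all-odd parity up to shifting. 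Then apply the observation to $\hat H$, and recover $\mathbf{H}_k$ after adjusting $f$ by a polynomial factor.

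I expect this degree issue to be the main obstacle. I would first attempt the direct argument: in any odd-minor model of $\mathbf{H}_k$ in $G'$, a subdividing vertex that is a branch vertex has degree $2$ in $G'$. Its branch set can therefore be extended, or the model can be locally modified, so that every subdividing vertex lies inside some tree or on a model edge path. Contracting those vertices back in $(G',\gamma_1)$ then yields a signed minor model in $(G,\gamma)$, preserving parity.
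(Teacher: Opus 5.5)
Your reduction is the paper's: subdivide every even edge once, use \cref{lem:SubdivisionsDontChangeOCPTW} to get $\tocptw(G,\gamma)=\tocptw(G',\gamma_1)=\tocptw(G')$, apply \cref{thm:OCPtwOddMinorGridThm} to $G'$, and translate the minors. Your part 1 is correct.

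Part 2, however, you leave open, and the obstacle you name does not exist. For $k\ge 2$, $\mathbf{H}_k$ and $\mathbf{V}_k$ have minimum degree at least $3$. The cylindrical grid $P_k\Box C_{4k}$ has no corners: every vertex has two neighbours on its own cycle and at least one on an adjacent cycle. So vertices on the innermost and outermost cycles have degree exactly $3$, the rest have degree $4$, and the added edges only raise the degrees of vertices in $X$. Degree-$2$ corners occur in the ordinary grid $\mathbf{W}_k$, not here. Hence \cref{obs:oddMinorSignedGraphEquivalenceMinDeg3} applies verbatim. An odd-minor $\mathbf{H}_k$ or $\mathbf{V}_k$ in $G'$ yields $(\mathbf{H}_k,\gamma_1)$ or $(\mathbf{V}_k,\gamma_1)$ as a signed minor of $(G,\gamma)$, which is exactly the paper's one-line proof.

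The workarounds you sketch would not work as stated, had they been needed. Local rerouting fails exactly when a degree-$2$ vertex of $H$ is modelled by a singleton branch set consisting of a subdividing vertex. For example, let $G$ have vertices $x_1,x_2,a$, odd edges $x_1x_2$, $x_1a$, $ax_2$, and an even edge $x_1x_2$. Then $G'$ is literally $\mathbf{H}_1$, yet $(G,\gamma)$ has only three vertices. This is why the observation assumes minimum degree $3$.

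The $\hat H$ route fails for two reasons. Contracting away a degree-$2$ vertex of an all-odd graph leaves an \emph{even} edge, so the signature on $\hat H$ for which $(\mathbf{H}_k,\gamma_1)$ is a parity-preserving subdivision is not $\gamma_1$, and the observation does not apply to it. Moreover, a signed-minor model of $\hat H$ in $(G,\gamma)$ need not extend to one of its subdivision: a model edge may be a single edge of $G$, leaving no room for the subdividing vertex.

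Degree-$2$ vertices genuinely occur only in the degenerate case $k=1$, where $W=C_4$ and $\mathbf{H}_1=\mathbf{V}_1$; the paper's proof does not single this case out either. Part 1 is vacuous there. Part 2 follows from the case $k=2$ after replacing $f$ by $f+f(2)$, since $(\mathbf{H}_1,\gamma_1)$ is a signed minor of both $(\mathbf{H}_2,\gamma_1)$ and $(\mathbf{V}_2,\gamma_1)$. Each contains an odd triangle formed by an added edge and two outer-cycle edges, together with an ear through the inner cycle.
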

\begin{proof}
    Let $G'$ be the graph obtained from $(G,\gamma)$ by subdividing every even edge exactly once. Then by \cref{lem:SubdivisionsDontChangeOCPTW}, $\tocptw(G') = \tocptw(G',\gamma_1) = \tocptw(G,\gamma)$. The result then follows immediately from \cref{thm:OCPtwOddMinorGridThm} and \cref{obs:oddMinorSignedGraphEquivalenceMinDeg3}.
\end{proof}

Fiorini et al.~\cite{Fiorini2025Integer} gave a connection between solving integer programs over matrices in $\{-1, 0, 1\}^{m \times n}$ with two nonzero entries per row and solving \mis.  Choi et al.~\cite{ChoiGKMW2025OCPtw} used this to solve integer programs with coefficient matrix $A$ such that the associated signed graph $G^+(A)$ has bounded (tame) \ocp-treewidth.


\begin{theorem}[Theorem 1.3~\cite{ChoiGKMW2025OCPtw}]\label{thm:IPsolutionForForbiddingParityGrids}
    Let $A \in \{-1, 0, 1\}^{m \times n}$ have two nonzero entries per row such that the associated signed graph has \tocp-treewidth at most $k$. Then there exists a computable function $f_{\ref{thm:IPsolutionForForbiddingParityGrids}}(k)$ such that we can solve the integer program
    \[\max\{w^{\text{T}} x : Ax \leq b, x \in [\ell,u] \cap \mathbb{Z}^n\}\]
    in $n^{f_{\ref{thm:IPsolutionForForbiddingParityGrids}}(k)}$ time, where $w \in \mathbb{Z}^n$, $b \in \mathbb{Z}^m$, and $\ell,u \in (\mathbb{Z} \cup \{-\infty, \infty\})^n$.
\end{theorem}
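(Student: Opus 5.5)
This theorem is cited from \cite{ChoiGKMW2025OCPtw}, so here we only need to recall how its proof goes. The approach is to reduce to \mis on a graph of bounded \tocp-treewidth and then run dynamic programming over a tame \ocp-tree-decomposition.

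First, we handle the domains. Infinite bounds are replaced by finite ones using proximity. By Cook et al., since $A$ has entries in $\{-1,0,1\}$ and two nonzeros per row, it is totally $\Delta$-modular with $\Delta \le 2^{\ocp}$. There is then an optimal integer solution within $n\Delta$ of an optimal LP solution. However, $\ocp$ of the whole graph need not be bounded, so instead we follow Fiorini et al.~\cite{Fiorini2025Integer}. Their transformation takes an IP with two nonzero $\pm1$ entries per row and bounded domains to \mis. It replaces each variable $x_v$ by a ``ladder'' of $0/1$ threshold variables $y_{v,i} = [x_v \ge i]$. Each constraint $\pm x_u \pm x_v \le b$ becomes a collection of pairwise conflicts between threshold variables. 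The resulting conflict graph is obtained from $G^+(A)$ by a blow-up in which each vertex becomes a path or clique-like gadget. Same-sign rows produce odd connections and opposite-sign rows produce even ones, so that odd cycles in the conflict graph project to odd cycles of $G^+(A)$.

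The next step is to verify that this blow-up preserves bounded tame \ocp-treewidth. Take a tame decomposition $(T,\beta,\alpha)$ of $G^+(A)$ and replace each vertex by its gadget in every bag. The apex set $\alpha(t)$ then grows by a factor of the domain size. That is why the cited result needs unbounded domains handled separately, via a reduction that keeps only polynomially many relevant thresholds per variable, giving the $n^{f(k)}$ running time. The size-one adhesions outside $\alpha(t)$ become a single gadget. We argue that $\ocp$ of each bag minus its apex set is bounded in terms of $\ocp$ of $G^+(A)$ restricted to that bag, since disjoint odd cycles project to walks containing odd cycles of the signed bag.

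Finally, we solve \mis by dynamic programming over the decomposition. At each node $t$ we guess the restriction of the solution to $\alpha(t)$; there are $n^{O(k)}$ choices. For each child adhesion vertex outside $\alpha(t)$, we precompute two subtree values, one with that vertex in the solution and one with it excluded. These values become vertex weights in the bag. What remains is \mis on $G[\beta(t)\setminus\alpha(t)]$, which has bounded $\ocp$, with modified weights, and this is solved by the algorithm of Fiorini et al.

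The main obstacle is the adhesions of size one combined with weight modification. Encoding a child's two-valued contribution as a weight change on a single vertex is exact for \mis, so this works. The delicate part is the bounded-$\ocp$ \mis subroutine itself, which is taken as a black box from \cite{Fiorini2025Integer}.
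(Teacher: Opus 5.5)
Your last step is the right one. Guessing on $\alpha(t)$, turning each child attached through a single unprotected vertex into a weight change on that vertex, and calling the bounded-$\ocp$ algorithm of \cite{Fiorini2025Integer} on $G[\beta(t)\setminus\alpha(t)]$ is how the \textsc{MIS} algorithm for bounded \tocp-treewidth works. The gap is in how you reach \textsc{MIS}, and it has two parts.

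\textbf{Unbounded domains.} You note yourself that Cook et al.\ only gives distance $n\Delta$ with $\Delta$ up to $2^{\ocp(G,\gamma)}$, which may be exponential. The ``reduction that keeps only polynomially many relevant thresholds'' is then asserted with no argument.

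\textbf{The blow-up does not keep bag $\ocp$ bounded.} This is the fatal part, and it fails even for polynomial domains. Disjoint odd cycles of the blow-up do project to odd closed walks. But these walks may use different thresholds of the same variable, so their projections need not be disjoint. Take $x_v$ and, for $j\in[k]$, the rows $x_v+x_{a_j}\le 2j$, $x_{a_j}+x_{c_j}\le 2j-1$, $x_{c_j}+x_v\le 2j$, with all domains $\{0,\dots,k+1\}$. The signed graph is $k$ odd triangles sharing $v$, so one bag with $\alpha=\varnothing$ has width $1$. A row $x_u+x_{u'}\le p$ yields the conflicts $y_{u,i}y_{u',p-i+1}$. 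Hence each $\{y_{v,j+1},y_{a_j,j},y_{c_j,j}\}$ is a triangle, and these $k$ triangles are pairwise disjoint. So the instances handed to the bounded-$\ocp$ subroutine have unbounded $\ocp$. Relatedly, a size-one adhesion that becomes a ladder carries $D+1$ child values, not two.

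\textbf{The missing idea.} For $\{-1,0,1\}$-matrices with two nonzeros per row, large domains never need to be encoded. This is the step the paper points to in \cref{sec:futherDiscussion} (via \cite[Theorem~10.3]{ChoiGKMW2025OCPtw}, based on \cite[Section~3]{Fiorini2025Integer}). The LP relaxation has a half-integral optimal vertex $x^*$, and some optimal integer solution lies in the box $\lfloor x^*\rfloor\le z\le\lceil x^*\rceil$. To see this, clamp an integer optimum $z$ into the box to get $z'$:
\begin{itemize}
\item Half-integrality of $x^*$ keeps $z'$ feasible.
\item Write $w$ as a nonnegative combination of rows tight at $x^*$. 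No such row mixes an integral and a half-integral coordinate, so each tight row $a_r$ satisfies $a_r^{T}z'\ge a_r^{T}z$. Hence $w^{T}z'\ge w^{T}z$.
\end{itemize}
This bound is independent of $\Delta$ and of $\ell,u$.

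After translating, all variables are binary, and every row becomes at most one conflict between two literals. Same-sign rows become ordinary edges. Opposite-sign (even) rows are handled by subdividing the edge once and using big-$M$ weights. The result is \textsc{MIS} on a graph obtained from a subgraph of a shifting of $G^+(A)$ by subdividing even edges. By \cref{lem:OCPTWsignedGraphMinorMonotone} and \cref{lem:SubdivisionsDontChangeOCPTW}, its \tocp-treewidth is at most $k$. Each variable is now a single vertex, so your DP with the two-value trick applies verbatim.
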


Our main algorithmic result \cref{thm:MainAlgorithm} is an extension of \cref{thm:IPsolutionForForbiddingParityGrids} to when $A$ has entries outside of $\{-1,0,1\}$.

\section{Properties of \tdm-treewidth}\label{sec:PropertiesTDMTW}

In this section we show how \tdm-treewidth is a combination of $K$-free treewidth and \tocp-treewidth. We use these connections to prove \cref{thm:MainGridTheorem} and give an FPT-approximation for the \tdm-treewidth of a graph. We later use these results to prove \cref{thm:MainAlgorithm} in \cref{sec:SolvingIP}.

First, we show that \tdm-treewidth is minor monotone with respect to rooted signed graphs.

\begin{lemma}
    If $(H,\gamma_H,K_H)$ is a minor of $(G,\gamma_G,K_G)$, then $\tdmtw(H,\gamma_H,K_H) \leq \tdmtw(G,\gamma_G,K_G)$.
\end{lemma}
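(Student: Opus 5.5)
It suffices to treat one minor operation at a time, since a rooted signed graph minor is obtained by a finite sequence of vertex deletions, edge deletions, shiftings, contractions of even edges, and removals of vertices from the root set. Throughout, fix a \tdm-tree-decomposition $(T,\beta,\alpha,J)$ of $(G,\gamma_G,K_G)$ of minimum width. For each operation we produce a \tdm-tree-decomposition of the resulting rooted signed graph with the same tree $T$ and the same subtree $J$, and with width no larger.

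\textbf{Shifting.} This changes no bag, no protector and no root. Cycle parities are unchanged, so each $\ocp(G[\beta(t)\setminus\alpha(t)],\gamma)$ is unchanged, and the width stays the same.

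\textbf{Edge deletion.} We keep $\beta,\alpha,J$ as they are. The tree-decomposition axioms survive, and the odd cycle packing number of each $G[\beta(t)\setminus\alpha(t)]$ can only drop.

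\textbf{Vertex deletion of $v$.} We remove $v$ from every $\beta(t)$ and $\alpha(t)$. Adhesions only shrink, so the inequalities $|(\beta(t)\cap\beta(t'))\setminus\alpha(t)|\le 1$ and, for $t\in V(J)$, $\beta(t)\cap\beta(t')\subseteq\alpha(t)$ still hold. We also still have $K\cap\beta(t)\subseteq\alpha(t)$ and $K\subseteq\bigcup_{j\in V(J)}\beta(j)$. Both $|\alpha(t)|$ and the odd cycle packing number do not increase.

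\textbf{Removing $v$ from the root set.} Every condition on $K$ only weakens when $K$ shrinks, so the same decomposition works verbatim.

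\textbf{Contracting an even edge $uv$ into a new vertex $w$.} This is the step with actual content. In every bag and every protector, we replace any occurrence of $u$ or $v$ by $w$, as in the proof of \cref{lem:OCPTWsignedGraphMinorMonotone}.
\begin{itemize}
\item The result is a tree decomposition, because the node sets of $u$ and $v$ are subtrees of $T$ that meet, since $uv$ lies in some bag.
\item Sizes of $\alpha(t)$ do not increase.
\item The graph $G[\beta(t)\setminus\alpha(t)]$ either loses vertices or has the even edge $uv$ contracted. In both cases, every family of disjoint odd cycles in the new graph lifts to one in the old graph of the same size, because contracting an even edge preserves cycle parities. Hence $\ocp$ does not increase.
\item The protector condition on adhesions is preserved. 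The new adhesion $(\beta'(t)\cap\beta'(t'))\setminus\alpha'(t)$ is the image of $(\beta(t)\cap\beta(t'))\setminus\alpha(t)$ together with possibly $w$. If $w$ lies in the new adhesion but not in $\alpha'(t)$, then neither $u$ nor $v$ lies in $\alpha(t)$, and $w$ arises from some $u$ or $v$ lying in both $\beta(t)$ and $\beta(t')$. So $w$ is the image of an element of the old set, and the image map can only merge elements. Thus the size stays at most $1$, and stays empty when the protector is strong.
\item The root conditions are preserved. If $w$ is a root, then $u$ or $v$, say $u$, was a root. Then $u\in\alpha(t)$ for every $t$ with $u\in\beta(t)$, and $u\in\beta(j)$ for some $j\in V(J)$. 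Now take a node $t$ where $w$ appears because $v\in\beta(t)$ but $u\notin\beta(t)$. Here $w\in\beta'(t)$ but possibly $w\notin\alpha'(t)$, which is the delicate case.
\end{itemize}

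The fix for this delicate case is to add $w$ to $\alpha'(t)$ for every such $t$. This raises $|\alpha'(t)|$ by one. To keep the width bounded, we remove the former copy of $v$ from $\beta(t)\setminus\alpha(t)$. Removing a vertex from the unprotected part lowers the odd cycle packing number by at most one, and never raises it. That trade is not free in general, so this is the main obstacle. I would argue it more cleverly.

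\begin{itemize}
\item Since $v$ was not protected at $t$, there is at most one adhesion at $t$ that contains $v$ outside $\alpha(t)$. If $t\in V(J)$, there is no such adhesion at all.
\item So instead of enlarging protectors, I would first modify the decomposition of $G$ itself. Add $u$ to every bag on the path in $T$ from the node set of $u$ to the node set of $v$, and to the protectors there.
\item A cleaner choice is to pick a node $t_0$ containing both $u$ and $v$. Since $u$ is a root, $t_0$ already has $u\in\alpha(t_0)$.
\end{itemize}

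If this extension cannot be made width-free, the fallback is to accept an additive $+1$ per contracted root edge. I would then reorder the operations so that all contractions involving roots happen where $v$'s node set lies inside $u$'s. Concretely, I would check whether the node set of $v$ can be shrunk into that of $u$ by deleting $v$ from leaf-side bags where its only role is the edge $uv$. This check is the step I expect to require the real work.
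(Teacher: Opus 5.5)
Your reduction to single operations is the same as the paper's, and your checks for shifting, deletions, un-rooting, and the tree-decomposition, adhesion and $\ocp$ conditions under contraction are correct. The paper's proof also handles contraction by substituting the new vertex for $u$ and $v$ in every $\beta(t)$ and $\alpha(t)$.

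The gap is the case you flagged and left open. Contract an even edge $uv$ with $u\in K_G$, and look at a node $t$ with $v\in\beta(t)\setminus\alpha(t)$ and $u\notin\beta(t)$. There the new root $w$ lies in $\beta'(t)\setminus\alpha'(t)$, which violates $K_H\cap\beta'(t)\subseteq\alpha'(t)$. Your repairs do not fix this. Adding $u$ along a path between the node sets of $u$ and $v$ is vacuous, since those subtrees already meet. The other variants amount to putting $w$ into $\alpha'(t)$, which costs $+1$, so the fallback does not give the stated inequality. The paper's proof passes over this case silently and has the same hole.

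The hole cannot be filled, because the statement is false as written. Let $G$ be an all-odd $K_4$ on $\{v,a,b,c\}$ plus a vertex $u$ joined to $v$ by an even edge, with $K_G=\{u\}$. Let $T$ be the path with vertices $j,t_1,t_2$ in this order, and set:
\begin{itemize}
\item $J=\{j\}$ and $\beta(j)=\alpha(j)=\{u\}$;
\item $\beta(t_1)=\{u,v\}$ and $\alpha(t_1)=\{u\}$;
\item $\beta(t_2)=\{v,a,b,c\}$ and $\alpha(t_2)=\varnothing$.
\end{itemize}
This is a \tdm-tree-decomposition of width $1$. The only strong protector needed is at $j$. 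The adhesion $\{v\}$ is allowed at $t_1\notin V(J)$ and at $t_2$. An all-odd $K_4$ has odd cycle packing number $1$. So $\tdmtw(G)\le 1$.

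Contracting $uv$ gives $H=(K_4,\gamma_1,\{w\})$. Every tree decomposition of $H$ has a bag containing all four vertices, and that bag's protector must contain $w$. If the protector is exactly $\{w\}$, the odd triangle $abc$ remains; otherwise it has size at least $2$. Hence $\tdmtw(H)=2>1=\tdmtw(G)$. This is exactly your delicate case: at $t_2$ the vertex $v$ is unprotected, and deleting it does not destroy an odd cycle.

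What does hold is a weaker bound, $\tdmtw(H,\gamma_H,K_H)\le 2\,\tdmtw(G,\gamma_G,K_G)$. It follows from \cref{lem:equivalenceOfDecompositions} together with the minor-monotonicity of $\mathsf{tw}_K$ (\cref{obs:KfreeTWminormonotone}) and of $\tocptw$ (\cref{lem:OCPTWsignedGraphMinorMonotone}). For a single contraction, your idea of adding $w$ to $\alpha'(t)$ at the bad nodes also gives an additive $+1$. Neither is the claimed lemma. The proof of \cref{thm:MainGridTheorem} does not invoke this lemma.
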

\begin{proof}
    We may assume that $(H,\gamma_H,K_H)$ is formed from $(G,\gamma_G,K_G)$ be via a single vertex deletion, a single edge deletion, shifting at a single vertex, contracting one even edge, or removing a vertex from the set of roots. Let $(T,\beta,\alpha,J)$ be a \tdm-tree-decomposition for $(G,\gamma_G,K_G)$. If $(H,\gamma_H,K_H)$ is formed by an edge deletion, shifting, or removing a vertex from the set of roots, then $(T,\beta,\alpha,J)$ is a \tdm-tree-decomposition for $(H,\gamma_H,K_H)$ of the same width. Note that shifting does not change the odd cycle packing number of any subgraph. If $(H,\gamma_H,K_H)$ is formed by vertex deletion, then by removing that vertex from the image of $\beta(t),\alpha(t)$ for every $t \in V(T)$, we obtain a \tdm-tree-decomposition for $(H,\gamma_H,K_H)$ of only smaller width. Suppose that $(H,\gamma_H,K_H)$ is formed by contracting the edge $uv$ to create the new vertex $v'$. We then set replace each instance of $u$ or $v$ in any of $\beta(t),\alpha(t)$ with $v'$ in order to obtain a \tdm-tree-decomposition of $(H,\gamma_H,K_H)$ of only smaller width. Note that contracting an even edge can only decrease the odd cycle packing number.
\end{proof}

We now more explicitly state the relation between the width parameters discussed thus far.

\begin{lemma}\label{lem:equivalenceOfDecompositions}
    Let $(G,\gamma,K)$ be a rooted signed graph. Then we have the following relationship between parameters.

    \[\max\{\mathsf{tw}_K(G,K)+1, \tocptw(G,\gamma)\} \leq \tdmtw(G,\gamma,K) \leq \mathsf{tw}_K(G,K) + 1 + \tocptw(G,\gamma)\]
    

\end{lemma}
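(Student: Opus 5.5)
We prove the two inequalities separately. The lower bound comes from converting a \tdm-tree-decomposition into each of the other two decompositions. The upper bound comes from gluing an optimal tree $K$-free-decomposition to copies of an optimal tame \ocp-tree-decomposition.

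\textbf{Lower bound.} Let $(T,\beta,\alpha,J)$ be a \tdm-tree-decomposition of width $w$.

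As noted after the definition of tame \ocp-tree-decompositions, $(T,\beta,\alpha)$ is itself a tame \ocp-tree-decomposition of $(G,\gamma)$. Its t-width is the same expression as the \tdm-width, so $\tocptw(G,\gamma)\le w$.

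For the $K$-free bound we build a tree $K$-free-decomposition $(T',\beta',L)$ of width at most $w-1$.
\begin{enumerate}
\item Take $T'$ to contain $J$, with $\beta'(j)=\alpha(j)$ for $j\in V(J)$.
\item For each $j\in V(J)$, attach a new leaf $\ell_j$ with $\beta'(\ell_j)=\beta(j)$.
\item For each component $C$ of $T-V(J)$ attached to $J$ by an edge $jc$, attach a new leaf $\ell_C$ to $j$ with $\beta'(\ell_C)=\bigcup_{s\in V(C)}\beta(s)$.
\item Let $L$ consist of all vertices lying in a bag $\beta(j)\setminus\alpha(j)$ with $j\in V(J)$, or in a bag of some component $C$ but not in the adhesion $\beta(j)\cap\beta(c)$.
\end{enumerate}
The key checks use that protectors at nodes of $J$ are strong, so every adhesion at $j$ lies inside $\alpha(j)$.
\begin{itemize}
\item A vertex of $\beta(j)\setminus\alpha(j)$ lies in no other bag, so it appears only in $\ell_j$.
\item A vertex of $C$ outside the adhesion $\beta(j)\cap\beta(c)$ lies only in bags of $C$, so it appears only in $\ell_C$.
\item Since $K\subseteq\bigcup_{j\in V(J)}\beta(j)$ and $K\cap\beta(j)\subseteq\alpha(j)$, no root lies in $L$. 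For $C$ this uses that a root in a bag of $C$ also lies in a $J$-bag, hence in the adhesion $\beta(j)\cap\beta(c)\subseteq\alpha(j)$.
\item Edges and the connectivity condition are inherited from $(T,\beta)$.
\end{itemize}
Every bag satisfies $|\beta'(t)\setminus L|\le\max_j|\alpha(j)|\le w$, so $\mathsf{tw}_K(G,K)\le w-1$.

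The degenerate case $J=\varnothing$ forces $K=\varnothing$. There $\mathsf{tw}_K=0$ by one leaf bag with $L=V(G)$, and the inequality $\mathsf{tw}_K+1\le w$ needs $w\ge 1$. This case needs separate handling: either use the convention on width, or note that if $w=0$ one can take a trivial decomposition. I flag this case as the main delicacy.

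\textbf{Upper bound.} Let $(T_1,\beta_1,L)$ be a tree $K$-free-decomposition of width $a$, and let $(T_2,\beta_2,\alpha_2)$ be a tame \ocp-tree-decomposition of $(G,\gamma)$ of t-width $b$. Construct the \tdm-tree-decomposition as follows.
\begin{enumerate}
\item Set $J=T_1$ and $\beta(t)=\alpha(t)=\beta_1(t)\setminus L$ for every $t\in V(T_1)$.
\item For each leaf $t$ of $T_1$ with $L_t=L\cap\beta_1(t)\ne\varnothing$, attach a disjoint copy of $T_2$, joined to $t$ by one edge from an arbitrary node of the copy.
\item On the copy of $s\in V(T_2)$, set $\beta(s)=(\beta_2(s)\cap L_t)\cup(\beta_1(t)\setminus L)$ and $\alpha(s)=(\alpha_2(s)\cap L_t)\cup(\beta_1(t)\setminus L)$.
\end{enumerate}

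We then check the conditions.
\begin{itemize}
\item \emph{Tree decomposition.} An edge with an end $v\in L_t$ has both ends in $\beta_1(t)$, since $t$ is the unique bag containing $v$. It is therefore covered by some $\beta_2(s)$ restricted as above. The occurrences of each vertex form a connected subtree.
\item \emph{Protectors.} Within a copy, each adhesion minus $\alpha$ is contained in $(\beta_2(s)\cap\beta_2(s'))\setminus\alpha_2(s)$, which has size at most $1$. The edge from $t$ to its copy has adhesion $\beta_1(t)\setminus L$, which lies in both protectors. Adhesions inside $J$ lie in $\alpha$, so the protectors on $J$ are strong.
\item \emph{Roots.} Since $K\cap L=\varnothing$, we have $K\subseteq\bigcup_{t\in V(J)}\beta(t)$, and every root in a bag lies in its protector.
\item \emph{Width.} Nodes of $J$ have cost at most $a+1$. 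A copy node has cost at most $|\alpha_2(s)|+(a+1)+\ocp(G[\beta_2(s)\setminus\alpha_2(s)],\gamma)\le a+1+b$, since the odd cycle packing number is monotone under taking subgraphs.
\end{itemize}
This gives $\tdmtw(G,\gamma,K)\le\mathsf{tw}_K(G,K)+1+\tocptw(G,\gamma)$.
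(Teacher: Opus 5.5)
Your argument is correct and is essentially the paper's in both directions. For the upper bound, you hang a tame \ocp-decomposition of each leaf part $L_t$ below the corresponding leaf of a tree $K$-free-decomposition, adding $\beta_1(t)\setminus L$ to every bag and protector. You restrict one global decomposition to $L_t$, whereas the paper takes a fresh decomposition of $G[\beta_J(j)\cap L]$ via monotonicity; this amounts to the same thing. For the lower bound, you collapse everything hanging off $J$ into leaf bags and use strongness of the protectors on $J$ to get $\beta'(t)\setminus L\subseteq\alpha(j)$.

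Your bookkeeping here is more careful than the paper's. The paper only adds extra leaves at internal nodes $j$ of $J$ with $\beta(j)\neq\alpha(j)$, and then puts into each leaf of $J$ the union of the bags of the subtree of $T$ below it. That misses components of $T-V(J)$ attached at internal nodes of $J$. It also misbehaves when the chosen root is a leaf of $J$, which is unavoidable when $J$ is a single edge. Your leaves $\ell_j$ and $\ell_C$ handle all of this uniformly.

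The degenerate case you flagged is not confined to $J=\varnothing$, and neither of your suggested fixes can work. The width of a tree $K$-free-decomposition is defined with a $\max\{0,\cdot\}$. So the bound $|\beta'(t)\setminus L|\le w$ only gives $\mathsf{tw}_K(G,K)\le\max\{0,w-1\}$, also when $J\neq\varnothing$; for example, take $J=T$ a single node with $\alpha=\varnothing$.

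In fact the stated inequality $\mathsf{tw}_K(G,K)+1\le\tdmtw(G,\gamma,K)$ is false when $\tdmtw(G,\gamma,K)=0$. Any balanced signed graph with $K=\varnothing$ (say a single vertex) has a one-bag \tdm-tree-decomposition with $\alpha=\varnothing$ of width $0$, while $\mathsf{tw}_K+1\ge 1$. The paper's proof has the same blind spot, since it also claims width at most $k-1$.

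What your construction proves, and what is true, is $\mathsf{tw}_K(G,K)\le\max\{0,\tdmtw(G,\gamma,K)-1\}$. This only affects the trivial width-$0$ case and is harmless for the paper's later asymptotic uses. You should state it that way rather than appeal to a convention or a ``trivial decomposition.''
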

\begin{proof}
    Let $(J,\beta_J,L)$ be a tree $K$-free-decomposition of $(G,K)$ of width at most $k-1$ and suppose that $\tocptw(G,\gamma) = r$. For every leaf $j$ of $J$, $\tocptw(G[\beta_J(j) \cap L],\gamma) \leq \tocptw(G,\gamma) = r$. Let $\ell(J)$ denote the set of leaves of $J$. For each $j \in \ell(J)$, let $(T_j,\beta_j,\alpha_j)$ be a tame \ocp-tree-decomposition of $(G[\beta_J(j) \cap L], \gamma)$ of t-width at most $r$. We now construct the \tdm-tree-decomposition of $(G,\gamma,K)$ as follows. Let $T$ be a tree formed from the disjoint union of $J$ and $T_j$ for $j \in \ell(J)$ by adding an edge from each leaf $j \in \ell(J)$ to an arbitrary vertex in $T_j$. For each $j \in V(J)$, let $\beta(j) = \alpha(j)= \beta_J(j) \setminus L$. For each $t \in V(T_j)$, let $\beta(t) = \beta_j(t) \cup (\beta_J(j) \setminus L)$ and $\alpha(t) = \alpha_j(t) \cup (\beta_J(j) \setminus L)$. Then $(T,\beta,\alpha,J)$ is a \tdm-tree-decomposition of $(G,\gamma,K)$ of width at most $k + r$.

    Suppose now that $(T,\beta,\alpha,J)$ is a \tdm-tree-decomposition of $(G,\gamma,K)$ of width at most $k$. Then $(T,\beta,\alpha)$ is a tame \ocp-tree-decomposition of $(G,\gamma)$ of t-width at most $k$, so it suffices to show $\mathsf{tw}_K(G,K) \leq k-1$. We augment $(T,\beta,\alpha,J)$ as follows. For every $j \in V(J) \setminus \ell(J)$ with $\beta(j) \neq \alpha(j)$, we add a new leaf $t$ to $J$ (and thus $T$) adjacent only to $j$ such that $\beta(t) = \beta(j)$ and $\alpha(t) = \alpha(j)$. We then set $\beta(j)$ to be equal to $\alpha(j)$. After performing this augmentation we still have a \tdm-tree-decomposition of width at most $k$ but now with the extra property that $\beta(j) = \alpha(j)$ for every $j \in V(J) \setminus \ell(J)$. We then take $L = V(G) \setminus \bigcup_{j \in V(J)} \alpha(j)$. Root $T$ at an arbitrary vertex in $V(J)$. For $j \in \ell(J)$, let $T_j$ be the subtree of $T$ consisting of all vertices whose unique path to the root includes $j$ and let $\beta'(j) = \bigcup_{t \in V(T_j)} \beta(t)$. For $j \in V(J) \setminus \ell(J)$, let $\beta'(j) = \beta(j)$. Then $(J,\beta',L)$ is a tree $K$-free-decomposition of $(G,K)$ of width at most $k-1$.
\end{proof}

The above proof naturally gives us an FPT-approximation for $\tdmtw(G,\gamma,K)$.

\begin{theorem}\label{thm:FPTapproxForTDMTW}
    There exists an algorithm that, given an $n$-vertex, $m$-edge rooted signed graph $(G,\gamma,K)$ and a non-negative integer $k$, either computes a \tdm-tree-decomposition $(T,\beta,\alpha,J)$ of $(G,\gamma,K)$ of width at most $f_{\ref{thm:FPTapproxForTDMTW}}(k)$ with $|V(T)| \in \mathcal{O}(n)$, or correctly returns that $\tdmtw(G,\gamma,K) > k$. The algorithm runs in time $g(k)\mathsf{poly}(m,n)$. Furthermore $f_{\ref{thm:FPTapproxForTDMTW}}(k) \in \mathsf{poly}(k)$.
\end{theorem}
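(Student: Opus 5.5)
The plan is to follow the constructive half of the proof of \cref{lem:equivalenceOfDecompositions}, with the exact decompositions replaced by the approximate ones from \cref{thm:FPTapproxForKfreeTW} and \cref{cor:FPTapproxForOCPTWsignedgraphs}.

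\textbf{Step 1 (root decomposition).} Run the algorithm of \cref{thm:FPTapproxForKfreeTW} on $(G,K)$ with parameter $k$. By \cref{lem:equivalenceOfDecompositions}, $\mathsf{tw}_K(G,K)+1 \leq \tdmtw(G,\gamma,K)$. So if this algorithm reports $\mathsf{tw}_K(G,K) > k$, we correctly report $\tdmtw(G,\gamma,K) > k$. Otherwise we obtain a tree $K$-free-decomposition $(J,\beta_J,L)$ of width at most $5k+5$ with $|V(J)| \in \mathcal{O}(n)$.

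\textbf{Step 2 (leaf decompositions).} For each leaf $j$ of $J$, run \cref{cor:FPTapproxForOCPTWsignedgraphs} with parameter $k$ on the signed graph $(G[\beta_J(j) \cap L],\gamma)$. By \cref{lem:OCPTWsignedGraphMinorMonotone}, this graph is a subgraph of $(G,\gamma)$, so
\[\tocptw(G[\beta_J(j) \cap L],\gamma) \leq \tocptw(G,\gamma) \leq \tdmtw(G,\gamma,K).\]
Hence if any call reports width greater than $k$, we correctly report $\tdmtw(G,\gamma,K) > k$. Otherwise each call returns a tame \ocp-tree-decomposition $(T_j,\beta_j,\alpha_j)$ of t-width at most $f_{\ref{thm:FPTapproxForOCPTW}}(k)$ with $\mathcal{O}(|\beta_J(j) \cap L|)$ nodes. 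Since each vertex of $L$ lies in exactly one bag of $J$, the sets $\beta_J(j)\cap L$ are disjoint, so the total number of nodes is $\mathcal{O}(n)$.

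\textbf{Step 3 (gluing).} Glue exactly as in the first half of the proof of \cref{lem:equivalenceOfDecompositions}:
\begin{enumerate}
    \item attach each $T_j$ to the leaf $j$ by a single edge;
    \item set $\beta(j)=\alpha(j)=\beta_J(j)\setminus L$ for $j \in V(J)$;
    \item set $\beta(t)=\beta_j(t)\cup(\beta_J(j)\setminus L)$ and $\alpha(t)=\alpha_j(t)\cup(\beta_J(j)\setminus L)$ for $t \in V(T_j)$.
\end{enumerate}

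The verification I expect to need is the following.
\begin{itemize}
    \item $(T,\beta)$ is a tree decomposition. Edges of $G$ with an end in $L$ lie inside $G[\beta_J(j)]$, and the part inside $L$ is covered by $T_j$. Edges with an end in $\beta_J(j)\setminus L$ are covered because every bag of $T_j$ contains $\beta_J(j)\setminus L$.
    \item Protectors and adhesions. Every root lies in some $\beta_J(j)\setminus L$, since $L \cap K = \varnothing$; this gives condition 3. Nodes of $J$ have $\beta=\alpha$, so they are strong. For $t\in V(T_j)$, the adhesions outside $\alpha(t)$ are those of $(T_j,\beta_j,\alpha_j)$, which have size at most $1$; the edge to $j$ has adhesion $\beta_J(j)\setminus L \subseteq \alpha(t)$.
    \item Width. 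Bags in $J$ have width at most $5k+6$. Bags in $T_j$ have width at most $5k+6+f_{\ref{thm:FPTapproxForOCPTW}}(k)$, because $\beta(t)\setminus\alpha(t)=\beta_j(t)\setminus\alpha_j(t)$. This bound is $\mathsf{poly}(k)$.
    \item Running time. Every piece runs in time $g(k)\mathsf{poly}(m,n)$.
\end{itemize}

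\textbf{Main obstacle.} The delicate part is checking that $\beta_J(j)\setminus L$ is disjoint from every $\beta_j(t)$. This holds because $\beta_j(t)\subseteq L$, and it is what guarantees that $\beta(t)\setminus\alpha(t)$ equals $\beta_j(t)\setminus\alpha_j(t)$, and hence that the odd cycle packing term does not increase. The other subtle point is the size bound $|V(T)| \in \mathcal{O}(n)$. If the disjointness count in Step 2 is insufficient, I would fall back on the bag-contraction trick from the proof of \cref{cor:FPTapproxForOCPTWsignedgraphs}: contract tree edges $t_1t_2$ with $\beta(t_1)\subseteq\beta(t_2)$. This requires checking that contraction preserves both the protector conditions and membership in $J$.
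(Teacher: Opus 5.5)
Your proposal is correct and follows the paper's proof essentially step for step. You approximate the $K$-free treewidth, approximate the \tocp-treewidth of each leaf part $G[\beta_J(j)\cap L]$, and glue as in the first half of the proof of \cref{lem:equivalenceOfDecompositions}, with the $\mathcal{O}(n)$ bound coming from disjointness of the sets $\beta_J(j)\cap L$ (so your contraction fallback is never needed). The only difference is cosmetic: the paper calls \cref{thm:FPTapproxForKfreeTW} with parameter $k-1$, which gives width $5k$ instead of your $5k+5$, and this does not matter for a $\mathsf{poly}(k)$ bound.
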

\begin{proof}
    We first apply \cref{thm:FPTapproxForKfreeTW} to either obtain a tree $K$-free-decomposition $(J,\beta_J,L)$ of $(G,K)$ of width at most $5k$ with $|J| \in \mathcal{O}(n)$, or we correctly determine that $\mathsf{tw}_K(G,K) > k - 1$. By \cref{lem:equivalenceOfDecompositions} we have $\tdmtw(G,\gamma,K) \geq \mathsf{tw}_K(G,K) + 1$, and so in the second case we may correctly return that $\tdmtw(G,\gamma,K) > k$.
    
    Hence we may assume that we obtain the tree $K$-free-decomposition $(J,\beta_J,L)$. Then for each leaf $j \in V(J)$, we apply \cref{cor:FPTapproxForOCPTWsignedgraphs} to either obtain a tame \ocp-tree-decomposition of $(G[\beta(j) \cap L],\gamma)$ of width at most $f_{\ref{thm:FPTapproxForOCPTW}}(k)$, or we correctly determine that $\tocptw(G[\beta(j) \cap L], \gamma) > k$. Again in the second case we correctly determine that $\tdmtw(G,\gamma,K) \geq \tocptw(G,\gamma) \geq \tocptw(G[\beta(j) \cap L],\gamma) > k$ by \cref{lem:equivalenceOfDecompositions}. Thus we may assume that we obtain a tame \ocp-tree-decomposition $(T_j,\beta_j,\alpha_j)$ of $(G[\beta(j) \cap L], \gamma)$ of width at most $k$ and with $|V(T_j)| \in \mathcal{O}(|\beta(j) \cap L|)$. The number of vertices over all such trees $T_j$ is in $\mathcal{O}(|L|)$. We then construct a \tdm-tree-decomposition $(T,\beta,\alpha,J)$ of $(G,\gamma,K)$ as in the proof of \cref{lem:equivalenceOfDecompositions}. The resulting decomposition has width at most $5k+1 + f_{\ref{thm:FPTapproxForOCPTW}}(k)$ and $|V(T)| \in \mathcal{O}(n)$.
\end{proof}

\cref{lem:equivalenceOfDecompositions} and \cref{cor:forbidRootedGridForbidEvenRootedGridEquivalence} immediately imply \cref{thm:MainGridTheorem}, which we restate below for convenience.

\MainGridTheorem*
\begin{proof}
    Suppose that $(G,\gamma,K)$ forbids $\mathcal{H}_k, \mathcal{V}_k$, and $\mathcal{W}_k$ as a minor. By \cref{cor:forbidRootedGridForbidEvenRootedGridEquivalence}, $(G,K)$ forbids $(\mathbf{W}_{k^2},K_{\mathbf{W}_{k^2}})$ as a rooted minor. Then by \cref{thm:forbiddingRootedGrid}, $\mathsf{tw}_K(G,K) \leq f_{\ref{thm:forbiddingRootedGrid}}(k^2)$. Similarly by \cref{cor:OCPtwSignedGraphGridThm}, $\tocptw(G,\gamma) \leq f_{\ref{thm:OCPtwOddMinorGridThm}}(k)$. Hence the result follows from \cref{lem:equivalenceOfDecompositions}.

    Suppose instead that $(G,\gamma,K)$ contains one of $\mathcal{H}_k, \mathcal{V}_k$, or $\mathcal{W}_k$ as a minor. If $(G,\gamma,K)$ contains $\mathcal{W}_k$ as a minor, then $(G,K)$ contains $(\mathbf{W}_k, K_{\mathbf{W}_k})$ as a minor. Then by \cref{lem:rootedGridLargeKFreeTW} and \cref{lem:equivalenceOfDecompositions}, $\tdmtw(G,\gamma,K) \geq \mathsf{tw}_K(G) + 1 \geq \Omega(k)$. If $(G,\gamma,K)$ contains one of $\mathcal{H}_k, \mathcal{V}_k$ as a minor, then $(G,\gamma)$ contains one of $(\mathbf{H}_k,\gamma_1)$, $(\mathbf{V}_k, \gamma_1)$ as a signed graph minor. Thus by \cref{cor:OCPtwSignedGraphGridThm} and \cref{lem:equivalenceOfDecompositions} we have $\tdmtw(G,\gamma,K) \geq \tocptw(G,\gamma) \geq \Omega(k)$.
\end{proof}

\section{Solving integer programs with large entries}\label{sec:SolvingIP}

In this section we prove \cref{thm:MainAlgorithm}, which seeks to extend \cref{thm:IPsolutionForForbiddingParityGrids} to matrices with entries outside $\{-1, 0, 1\}$ under the condition that variables have bounded domain size, and there is no ``highly linked'' set of columns with entries outside $\{-1, 0, 1\}$. We restate \cref{thm:MainAlgorithm} below for convenience.

\MainAlgorithmicTheorem*
\begin{proof}
    Let $G^+_\bullet(A) = (G,\gamma,K)$. For each $v \in V(G)$, let $w_v,x_v,\ell_v,u_v$ be the associated entries in $w,x,\ell,u$ respectively. By \cref{lem:equivalenceOfDecompositions}, $\mathsf{tw}_K(G,K) \leq k-1$. Then by \cref{thm:FPTapproxForKfreeTW}, we obtain a tree $K$-free-decomposition $(T,\beta,L)$ of $(G,K)$ of width at most $5k$ with $|V(T)| \in \mathcal{O}(n)$ in time $2^{\mathcal{O}(k)}\mathsf{poly}(n)$. We may assume that $T$ contains a non-leaf vertex, otherwise we subdivide the single edge in $T$ to create a new vertex $t$ and set $\beta(t) = \beta(t') \setminus L$ for one of the other vertices $t'$ of $T$. We root $T$ at an arbitrary non-leaf vertex $r$. For each $t \in V(T)$, we denote by $T_t$ the subtree of $T$ consisting of all vertices whose unique path to the root includes $t$. We denote by $\mathsf{IP}_t$ the above integer program restricted only to variables $x_v$ for $v \in \bigcup_{t' \in V(T_t)} \beta(t')$. We keep the constraints which include only the variables we restricted to.
    
    We construct a dynamic programming table $p[t,\eta_t]$ where for each $t \in V(T)$ and each assignment $\eta_t: \{x_v : v \in \beta(t) \setminus L\} \rightarrow \mathbb{Z}$ of $x_v$ to a value in $[\ell_v, u_v]$, $p[t,\eta_t]$ is the optimal objective value of $\mathsf{IP}_t$ under the partial assignment $\eta_t$, or we store $-\infty$ if the partial assignment can not be extended to a feasible solution. We note that $p[t,\eta_t]$ is finite if the partial assignment can be extended to a feasible solution. We compute $p[t,\eta_t]$ from the leaves of $T$ towards the root. The final solution is given by
    \[\max_{\eta_r} p[r,\eta_r].\]
    As there are $d^{5k}$ choices for $\eta_r$, this can be computed in time $\mathcal{O}(d^{5k})$ once we fill in the table $p$. We now show how to compute $p[t,\eta_t]$.

    Fix a vertex $t \in V(T)$ and fix $\eta_t$. If $t$ is a leaf, then because
    \[\tocptw(G[\beta(t) \cap L], \gamma) \leq \tocptw(G,\gamma) \leq \tdmtw(G,\gamma,K) \leq k\]
    by \cref{lem:equivalenceOfDecompositions}, we can compute $p[t,\eta_t]$ in $n^{f_{\ref{thm:IPsolutionForForbiddingParityGrids}}(k)}$ time by \cref{thm:IPsolutionForForbiddingParityGrids}.
    
    We may then assume that $t$ is not a leaf and we have computed $p[t',\eta_{t'}]$ for every $t'$ a child of $t$ and every $\eta_{t'}$. For each child $t'$ of $t$, we compute
    \[s[t',\eta_t] = \max\left\{p[t',\eta_{t'}] - \sum_{v \in \beta(t') \cap \beta(t)} w_v\eta_{t'}(x_v) : \eta_{t'}(x_v) = \eta_t(x_v)\text{ for all }v \in \beta(t) \cap \beta(t')\right\}.\]
    We note that $s[t',\eta_t]$ is uniquely determined by $\eta_t$ restricted to $\beta(t) \cap \beta(t')$. Thus we can compute $s[t',\eta_t]$ for all $\eta_t$ when we compute $p[t',\eta_{t'}]$ for all $\eta_{t'}$ in time $\mathcal{O}_k(d^{5k})$. This can be done by iterating over all choices of $\eta_t$ restricted to $\beta(t) \cap \beta(t')$, and then for each choice iterating over all ways to extend it to $\eta_{t'}$ by choosing the assignment of $x_v$ for $v \in \beta(t') \setminus \beta(t)$.
    
    We then set
    \[p[t,\eta] = \sum_{v \in \beta(t)} w_v\eta(x_v) + \sum_{t'\text{ child of }t} s[t',\eta].\]
    if the assignment $\eta$ does not violate any constraint induced by the bag of $t$, otherwise we set $p[t,\eta] = -\infty$. The correctness of the above algorithm follows immediately by induction.
\end{proof}

\section{Further discussion}\label{sec:futherDiscussion}
In this section we discuss some of the assumptions made in various theorems and whether they can be relaxed.

\subsection{On \texorpdfstring{$\{0,1\}$}{\{0,1\}}-variables and the definition of \tdm-treewidth}
The definition of a \tdm-tree-decomposition is rather technical and relies on specific properties of the adhesions. Perhaps it would be more natural to require all adhesions to have some bounded size outside of the protector set, or to not require some protectors to be strong. The reason for having an adhesion of size 1 outside of a hitting set is due to $\{0,1\}$-variables. Suppose all variables take values in $\{0,1\}$, and we are doing dynamic programming on the tree decomposition. At some bag $\beta(t)$ for $t \in V(T)$, we guess the value of variables in $\alpha(t)$ and the value of the adhesion with the parent bag. However there may be an unbounded number of children bags, and for each child $t'$ we must decide the value to take on $(\beta(t) \cap \beta(t')) \setminus \alpha(t)$. If this set has size 1, say it corresponds to variable $x_v$, then there's a nice trick: add a new variable $y_v \in \{0,1\}$, add the constraint $x_v + y_v \leq 1$, set $w(x_v)$ equal to the optimal value of the subtree with root $t'$ if $x_v = 1$, and set $w(y_v)$ equal to the optimal value of the subtree if $x_v = 0$. We may assume $w(x_v), w(y_v) \geq 0$ by adding a constant to the final weight. The optimal solution will then set exactly one of $x_v$ or $y_v$ to 1, and adding this constraint doesn't affect that the resulting problem is totally $\Delta$-modular. Hence we can make this construction for an unbounded number of child bags. If instead $x_v$ can take more than two values, or $|(\beta(t) \cap \beta(t')) \setminus \alpha(t)| > 1$, it is unclear how to encode this in the integer program without possibly increasing $\Delta$.

The reason we can allow $|(\beta(t) \cap \beta(t')) \setminus \alpha(t)| = 1$ for some bags in our \tdm-tree-decomposition is because we can reduce to $\{0,1\}$-variables for some of the subproblems during the dynamic programming. In the proof of \cref{thm:MainAlgorithm}, we applied \cref{thm:IPsolutionForForbiddingParityGrids} as a black box. What's really happening is that whenever we're in a subtree disjoint from $J$, we can guess on $\alpha(t)$ such that the remaining integer program corresponding to the subtree has coefficients in $\{-1,0,1\}$. We can then solve the LP-relaxation for the entire subtree and use proximity results to make all variables take values in $\{0,1\}$ (see \cite[Theorem 10.3]{ChoiGKMW2025OCPtw} based on \cite[Section 3]{Fiorini2025Integer}). Then we can use the above trick to fill the dynamic programming table even when $t$ has an unbounded number of children with adhesions of size 1.

The above discussion implies, for instance, that if we know that all variables are in $\{0,1\}$, we could solve the integer program even when no protectors are strong. This would correspond to a tame \ocp-tree-decomposition $(T,\beta,\alpha)$ such that $K \cap \beta(t) \subseteq \alpha(t)$ for all $t \in V(T)$ because we may now take $J = T$. This leads to the following question, which asks for a grid theorem for such decompositions.

\begin{question}
    What are the forbidden minors for the class of rooted signed graphs $(G,\gamma,K)$ which have a tame \ocp-tree-decomposition $(T,\beta,\alpha)$ of bounded width such that $K \cap \beta(t) \subseteq \alpha(t)$ for all $t \in V(T)$?
\end{question}

\subsection{Bounded entries and unbounded variables}

In \cref{thm:MainAlgorithm} we assume that $\|u - \ell\|_\infty$ is bounded, but we allow $\|A\|_\infty$ to be unbounded. Bounding $\|A\|_\infty$ is perhaps more natural from the perspective of totally $\Delta$-modular matrices, as this would imply that the bags of the \tdm-tree-decomposition are totally $\Delta$-modular. This leads to the following question.

\begin{question}
    Fix integers $d,k \geq 0$. Let $A$ be a matrix with two nonzero entries per row such that the associated rooted signed graph has \tdm-treewidth at most $k$. Let $w \in \mathbb{Z}^n, b \in \mathbb{Z}^m$, and $\ell,u \in (\mathbb{Z} \cup \{-\infty, \infty\})^n$. If $\|A\|_\infty \leq d$, can we solve the integer program
    \[\max\{w^Tx : Ax \leq b, x \in [\ell,u] \cap \mathbb{Z}^n\}\]
    in polynomial time for constant $d,k$?
\end{question}

In fact, something much simpler is not known.

\begin{conjecture}\label{conj:IPonaTree}
    Fix a positive integer $d$. Let $A$ be a matrix with exactly two nonzero entries per row such that $G(A)$ is a tree and $\|A\|_\infty \leq d$. Then the integer program
    \[\max\{w^Tx : Ax \leq b, x \in [\ell,u] \cap \mathbb{Z}^n\}\]
    can be solved in polynomial time for constant $d$.
\end{conjecture}

If the tree $G(A)$ has constant depth, the \cref{conj:IPonaTree} is implied by results on bounded treedepth (see, for instance, \cite{eisenbrand2019algorithmic}). However \cref{conj:IPonaTree} is not known even for $G(A)$ a path. The path case is perhaps particularly interesting due to the recent result of Bria{\'n}ski et al.~\cite{brianski2025integer} which shows that integer programming is NP-hard even when every non-zero coefficient appears in at most two consecutive constraints and $\|A\|_\infty \leq 8$. The resulting matrix has a ``path-like'' structure, but crucially may contain 3 nonzero entries per row. Due to the lack of hardness results for integer programs with at most two nonzero entries per row, it is not clear whether we must bound one of $\|u - \ell\|_\infty$ or $\|A\|_\infty$. It could be that \cref{conj:IPonaTree} is true without the bound on $\|A\|_\infty$.

Another interesting question is whether the rooted grid in \cref{cor:MainAlgorithmGridVersion} must be forbidden. In fact, it is unclear whether a grid with every vertex in the set of roots must be forbidden. That is, is there a hardness result for integer programs where large entries are everywhere, but the corresponding signed graph has no odd cycles?

\begin{question}
    Is it NP-hard to solve the integer program
    \[\max\{w^Tx : Ax \leq b, x \in [\ell,u] \cap \mathbb{Z}^n\}\]
    over instances where $G^+_\bullet(A) = (G,\gamma,K)$ is planar, has no odd cycles, but $K$ is arbitrary?
\end{question}

The above question is unknown even for $G^+_\bullet(A)$ not necessarily planar. Any hardness result for integer programs with at most two nonzero entries per row that isn't the standard reduction to \mis would be interesting.\\
\\
\noindent
\textbf{Acknowledgments.}
The author would like to thank Maximilian Gorsky, Rose McCarty, and Sebastian Wiederrecht for valuable discussions.

\bibliographystyle{alphaurl}
\bibliography{newbib}

\end{document}